\theoremstyle{plain}
\newtheorem{thm}{Theorem}[section] 
\newtheorem{cor}[thm]{Corollary}
\newtheorem{prop}[thm]{Proposition}
\newtheorem{conj}[thm]{Conjecture}
\newtheorem{lem}[thm]{Lemma}
\newtheorem*{mainthm1}{Theorem~A}
\newtheorem*{mainthm2}{Theorem~B}
\theoremstyle{definition} 
\newtheorem{defn}[thm]{Definition}
\newtheorem{eg}[thm]{Example} 
\theoremstyle{remark}
\newtheorem{rem}[thm]{Remark}
\newtheorem{ques}[thm]{Question}
\def\ge{\geqslant}
\def\le{\leqslant}
\def\phi{\varphi}
\def\epsilon{\varepsilon}
\def\to{\longrightarrow}
\def\mapsto{\longmapsto}
\def\into{\lhook\joinrel\longrightarrow}
\def\mod{\operatorname{\,mod}}
\newcommand{\sO}{\mathcal{O}}
\newcommand{\F}{\mathbb{F}}
\newcommand{\N}{\mathbb{N}}
\newcommand{\Q}{\mathbb{Q}} 
\newcommand{\R}{\mathbb{R}} 
\newcommand{\Z}{\mathbb{Z}}
\newcommand{\ba}{\mathfrak{a}}
\newcommand{\m}{\mathfrak{m}}
\newcommand{\n}{\mathfrak{n}}
\newcommand{\p}{\mathfrak{p}}
\newcommand{\q}{\mathfrak{q}}
\def\Hom{\operatorname{Hom}}
\def\Spec{\operatorname{Spec}}
\def\Proj{\operatorname{Proj}}
\def\Supp{\operatorname{Supp}}
\title{A Gorenstein criterion for strongly $F$-regular and\\ log terminal singularities}
\author{Anurag K. Singh}
\address{Department of Mathematics, University of Utah, 155 South 1400 East, Salt Lake City, UT~84112, USA}
\email{singh@math.utah.edu}
\author{Shunsuke Takagi}
\address{Graduate School of Mathematical Sciences, University of Tokyo, 3-8-1 Komaba, Meguro-ku, Tokyo 153-8914, Japan}
\email{stakagi@ms.u-tokyo.ac.jp}
\author{Matteo Varbaro}
\address{Dipartimento di Matematica, Universit\`a di Genova, Via Dodecaneso 35, I-16146 Genova, Italy}
\email{varbaro@dima.unige.it}
\thanks{A.K.S.~was partially supported by NSF grants DMS~1162585 and DMS~1500613, S.T.~by JSPS KAKENHI 26400039, and M.V.~by Geometria delle Variet\`a Algebriche PRIN 2010S47ARA 003. The third author worked on part of this project while he was visiting the Max Planck Institute for Mathematics of Bonn.\newline \indent The authors are grateful to Alessandro De Stefani, Yoshinori Gongyo, Hirotaka Onuki, Kei-ichi Watanabe, Ken-ichi Yoshida, and the referees for helpful comments.}
\dedicatory{Dedicated to Professor~Craig~Huneke on the occasion of his sixty-fifth birthday.}
\begin{document}

\begin{abstract}
A conjecture of Hirose, Watanabe, and Yoshida offers a characterization of when a standard graded strongly $F$-regular ring is Gorenstein, in terms of an $F$-pure threshold. We prove this conjecture under the additional hypothesis that the anti-canonical cover of the ring is Noetherian. Moreover, under this hypothesis on the anti-canonical cover, we give a similar criterion for when a normal $F$-pure (resp.~log canonical) singularity is quasi-Gorenstein, in terms of an $F$-pure (resp.~log canonical) threshold.
\end{abstract}

\maketitle
\markboth{A.~K.~SINGH, S.~TAKAGI and M.~VARBARO}{A GORENSTEIN CRITERION FOR STRONGLY $F$-REGULAR RINGS}

\section{Introduction}

Let $R$ be an $F$-pure domain of positive characteristic, and $\ba$ a nonzero proper ideal. The $F$-pure threshold $\mathrm{fpt}(\ba)$ was defined by Watanabe and the second author of this paper \cite{TW}; it may be viewed as a positive characteristic analogue of the log canonical threshold, and is an important measure of the singularities of the pair $(\Spec R, V(\ba))$. 
For example, a local ring~$(R,\m)$ is regular if and only if $\mathrm{fpt}(\m)>\dim R-1$.

We say that $R$ is strongly $F$-regular if $\mathrm{fpt}(\ba)>0$ for each nonzero proper ideal~$\ba$ of $R$. 
It is well-known that each strongly $F$-regular ring is Cohen-Macaulay and normal; it is then natural to ask:
when is a strongly $F$-regular ring Gorenstein? 
Toward answering this in the graded context, Hirose, Watanabe, and Yoshida proposed the following:

\begin{conj}{\cite[Conjecture~1.1~(2)]{HWY}}
\label{conj:HWY}
Let $R$ be a standard graded strongly $F$-regular ring, with $R_0$ an $F$-finite field of characteristic $p>0$. Let $\m$ be the unique homogeneous maximal ideal of $R$. 
Then~$\mathrm{fpt}(\m)=-a(R)$ if and only if $R$ is Gorenstein. 
\end{conj}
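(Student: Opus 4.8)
The plan is to prove the two implications separately; the forcing direction $\mathrm{fpt}(\m)=-a(R)\Rightarrow R$ Gorenstein carries all the difficulty, and it is there that Noetherianity of the anti-canonical cover enters.

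First I would record a general inequality and the easy implication. For any standard graded $F$-pure $R$ one has $\mathrm{fpt}(\m)\le -a(R)$: as $R$ is $F$-split, Frobenius is injective on $H^d_\m(R)$ ($d=\dim R$), so a nonzero class of top degree $a(R)$ survives the operator $c\,F^e$ for data $c\in\m^{\lceil t(p^e-1)\rceil}$ witnessing sharp $F$-purity of $(R,\m^t)$, and a degree count against $a(R)$ forces $t\le -a(R)$. If $R$ is Gorenstein then $\Hom_R(F^e_*R,R)$ is free over $F^e_*R$ on the trace $\operatorname{Tr}^e$, which is homogeneous of degree $-(p^e-1)(-a(R))/p^e$, and an $F$-splitting of $R$ becomes $\operatorname{Tr}(F_*u)=1$ for some $u\in R_{(p-1)(-a(R))}\subseteq\m^{(p-1)(-a(R))}$; this exhibits $(R,\m^{-a(R)})$ as sharply $F$-pure, so $\mathrm{fpt}(\m)\ge -a(R)$, whence equality. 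In particular $\mathrm{fpt}(\m_T)=-a(T)$ for every standard graded Gorenstein strongly $F$-regular $T$, a fact I will reuse.

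Now assume $\mathrm{fpt}(\m)=-a(R)$ and put $a:=-a(R)>0$. The crucial step is to show that $[\omega_R]\in\operatorname{Cl}(R)$ is torsion. Here I would use the identification $\Hom_R(F^e_*R,R)\cong F^e_*\bigl(\omega_R^{(1-p^e)}\bigr)$ and the elementary observation that, for a rank-one reflexive graded module $M$ over a connected graded normal domain, the least degree of a nonzero homomorphism $M\to R$ equals $-\operatorname{indeg}(M)$ precisely when $M$ is free. Unwinding sharp $F$-purity of $(R,\m^a)$ as $\phi(F^e_*c)=1$ with $\phi$ and $c\in\m^{a(p^e-1)}$ homogeneous, the vanishing $R_{<0}=0$ forces $\deg\phi=-(\deg c)/p^e\le -a(p^e-1)/p^e$, while in general $\deg\phi\ge\operatorname{indeg}(\omega_R^{(1-p^e)})/p^e\ge -a(p^e-1)/p^e$; hence $\operatorname{indeg}(\omega_R^{(1-p^e)})=-a(p^e-1)=-\operatorname{indeg}(\omega_R^{(p^e-1)})$, so $\omega_R^{(p^e-1)}$ is a free $R$-module. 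Composing splittings lets one run this for all $e$ in an arithmetic progression, giving $(p^{e_0}-1)[\omega_R]=0$ for some $e_0$, so $[\omega_R]$ has finite order $r$, prime to $p$. This is the main obstacle, and it is exactly where the hypothesis is used: without it, $\mathrm{fpt}(\m)=-a(R)$ only delivers the asymptotic identity $\lim_n\operatorname{indeg}(\omega_R^{(-n)})/n=-a$, which does not pin down any single module $\omega_R^{(p^e-1)}$; Noetherianity of $\bigoplus_{n\ge 0}\omega_R^{(-n)}$ controls the subadditive function $n\mapsto\operatorname{indeg}(\omega_R^{(-n)})$ tightly enough to force the $F$-pure threshold to be attained at $t=a$ and $\operatorname{indeg}(\omega_R^{(1-p^e)})$ to equal $-a(p^e-1)$ on the nose for $e\gg0$.

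With $[\omega_R]$ of finite order $r$ prime to $p$, I would pass to the cyclic cover $\widetilde R=\bigoplus_{i=0}^{r-1}\omega_R^{(-i)}$: it is normal, module-finite over $R$, étale over $R$ in codimension one, carries a $\mu_r$-action with $R=\widetilde R^{\mu_r}$ a direct summand (as $r$ is a unit), and — being quasi-Gorenstein, strongly $F$-regular (cyclic covers along a torsion canonical class in characteristic prime to the order remain strongly $F$-regular), and hence Cohen–Macaulay — is Gorenstein. After normalising the grading so that $\widetilde R$ is standard graded one has $R=\widetilde R^{(r)}$, and unwinding the $\mu_r$-equivariant structure of $\omega_{\widetilde R}\cong\widetilde R\otimes_R\omega_R$ shows that $[\omega_R]$ corresponds to the residue $-a(\widetilde R)\bmod r$; since its order is exactly $r$, this gives $\gcd(a(\widetilde R),r)=1$. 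Finally, $\m_R\widetilde R=\m_{\widetilde R}^r$ and $\mathrm{fpt}$ is preserved along the split finite extension $R\subseteq\widetilde R$, so $\mathrm{fpt}(\m_R)=\mathrm{fpt}_{\widetilde R}(\m_{\widetilde R}^r)=\mathrm{fpt}_{\widetilde R}(\m_{\widetilde R})/r=-a(\widetilde R)/r$ by the Gorenstein case applied to $\widetilde R$, while the local-cohomology computation for a Veronese subring gives $-a(R)=\lceil -a(\widetilde R)/r\rceil$. The equality $\mathrm{fpt}(\m_R)=-a(R)$ then forces $-a(\widetilde R)/r\in\Z$, i.e. $r\mid a(\widetilde R)$; together with $\gcd(a(\widetilde R),r)=1$ this yields $r=1$. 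Hence $\omega_R$ is free and, $R$ being Cohen–Macaulay, $R$ is Gorenstein.
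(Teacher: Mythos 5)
Your opening two paragraphs are fine and agree with the paper (Propositions~\ref{prop:a_i} and~\ref{prop:fit=-a}, or the corollary to Theorem~\ref{theorem:omega}); note that the conjecture as stated is open, and like the paper you are really proving it under the added hypothesis that the anti-canonical cover is Noetherian. Your key tools --- the identification of $\Hom_R(F^e_*R,R)$ with $F^e_*\bigl(\omega_R^{(1-p^e)}\bigr)$ and the fact that $\delta(\ba^{(-1)})=-\delta(\ba)$ forces $\ba$ to be principal --- are exactly the paper's Theorem~\ref{theorem:omega} and Lemma~\ref{lem1}, so you are on the paper's second route. But the crux is asserted, not proved: you claim Noetherianity of $\bigoplus_n\omega_R^{(-n)}$ forces $(R,\m^{-a(R)})$ to be sharply $F$-pure and forces $\delta\bigl(\omega_R^{(1-p^e)}\bigr)=a(R)(p^e-1)$ exactly for $e\gg0$. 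Neither follows in this generality: the paper obtains attainment of the threshold only under the extra hypotheses of Proposition~\ref{prop:critical value} (the $c$-th Veronese of the anti-canonical cover standard graded and $p\nmid c$), and when $p$ divides the period $c$ of the cover the exact equality of initial degrees at $1-p^e$ already presupposes that some $\omega_R^{(-b)}$ with $0<b<c$ has the extremal initial degree, i.e.\ essentially what is to be proved. What Noetherianity actually gives --- and what the paper uses --- is that $\nu_e(\m)$ lies in a fixed finite list of values $a(q)d_i+e_j$, whence $\mathrm{fpt}(\m)=d_i/c$ with $-d_i$ a generator degree of $\omega_R^{(-c)}$; the hypothesis $\mathrm{fpt}(\m)=-a(R)$ then forces $\delta(\omega_R^{(-c)})=a(R)c$, and Lemma~\ref{lem1} applied to the single module $\omega_R^{(c)}$ gives principality. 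The module to pin down is $\omega_R^{(c)}$, not $\omega_R^{(1-p^e)}$, and the ``tight control'' needs the actual argument (the unnamed proposition preceding Corollary~\ref{cor:final}).

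The cyclic-cover finale is both unnecessary and broken. Unnecessary, because once $\omega_R^{(q-1)}$ (or $\omega_R^{(c)}$) is known to be principal with generator in degree $-a(R)(q-1)$, the inclusion $\omega_R^{q-1}\subseteq\omega_R^{(q-1)}$ together with $\delta(\omega_R^{q-1})=a(R)(1-q)$ forces $\omega_R^{(q-1)}=\omega_R^{q-1}$ and hence $\omega_R$ principal --- this is the second half of Proposition~\ref{prop:bound} and it ends the proof with no cover. Broken, because the cyclic cover $\widetilde R=\bigoplus_{i=0}^{r-1}\omega_R^{(-i)}$ of a standard graded ring is in general only $\tfrac1r\Z$-graded with generators in assorted degrees: it need not become standard graded after rescaling, $R$ need not be its $r$-th Veronese, $\m_R\widetilde R\ne\m_{\widetilde R}^r$ in general, and the asserted invariance of $\mathrm{fpt}$ along the split finite extension $R\subseteq\widetilde R$ is not a formal fact. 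None of these steps is needed once the degree argument is run to completion.
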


We prove that the conjecture holds for many classes of (not necessarily strongly $F$-regular) $F$-pure normal standard graded rings: 

\begin{mainthm1}[Corollaries~\ref{cor:conclusion I},~\ref{cor:final}]
Let $R$ be an $F$-pure, normal, standard graded ring, with~$R_0$ an $F$-finite field of characteristic $p>0$. Let $\m$ denote the homogeneous maximal ideal of $R$. 
Set $X=\Spec R$, and suppose that the anti-canonical cover $\bigoplus_{n \ge 0} \sO_X(-nK_X)$ of~$X$ is a Noetherian ring.
Then $\mathrm{fpt}(\m)=-a(R)$ if and only if $R$ is quasi-Gorenstein. 
\end{mainthm1}

Note that a ring is Gorenstein if and only if it is Cohen-Macaulay and quasi-Gorenstein. Under the hypotheses of Theorem~A, the anti-canonical cover $\bigoplus_{n \ge 0} \sO_X(-nK_X)$ is known to be Noetherian in each of the following cases:
\begin{enumerate}[\quad\rm(1)]
\item $R$ is $\Q$-Gorenstein, 
\item $R$ is a semigroup ring,
\item $R$ is a determinantal ring, 
\item $R$ is a strongly $F$-regular ring of dimension at most three,
\item $R$ is a four-dimensional strongly $F$-regular ring, of characteristic $p>5$. 
\end{enumerate}

We give two proofs of Theorem~A: the first has the advantage that it can be adapted to obtain results in the local setting, e.g., Theorem~\ref{thm:anti-canonical}. The second proof, while limited to the graded context, provides a technique for computing the numerical invariants at hand; see Proposition~\ref{prop:determinantal} for the case of determinantal rings.
We describe the two techniques, after recalling some definitions: 

Recall that a ring $R$ of prime characteristic $p$ is called $F$-finite if the Frobenius map $F\colon R \to F_*R$ is a finite map. 
Let $R$ be a local or standard graded $F$-finite domain of characteristic $p>0$, and suppose that $R$ is $F$-pure, i.e., the $e$-th iterated Frobenius map $F^e \colon R \to F^e_*R$ with $x\mapsto F^e_*x^{p^e}$ splits as an $R$-linear map for each $e\ge 1$. 
Given a nonzero ideal $\ba \subsetneq R$, and an integer $e\ge 1$, set $\nu_e(\ba)$ to be the largest integer~$r \ge 0$ such that there exists a nonzero element $c$ in $\ba^r$ for which the composite map 
\[
R \xrightarrow{F^e} F^e_*R \xrightarrow{\times F^e_*c} F^e_*R,\quad\text{ where }\quad x \mapsto F^e_*x^{p^e} \mapsto F^e_*(cx^{p^e}),
\]
splits as an $R$-module homomorphism. Then, $\mathrm{fpt}(\ba)$ is defined to be $\lim_{e\to\infty}\nu_e(\ba)/p^e$.

The first proof of Theorem~A uses an invariant $c(\ba)$ that was originally introduced in \cite{ST}: 
Given a nonzero ideal $\ba \subsetneq R$, this invariant is defined in terms of the Grothendieck trace of the iterated Frobenius map $\mathrm{Tr}^e \colon F^e_*\omega_R \to \omega_R$, where~$\omega_R$ is the canonical module of $R$.
For an $F$-pure normal graded ring $(R, \m)$, one has
\[
\mathrm{fpt}(\m) \le c(\m) \le -a(R),
\]
with equality holding when $R$ is a quasi-Gorenstein standard graded ring; see Propositions~\ref{prop:a_i} and~\ref{prop:fit=-a}. 
Thus, it suffices to show that if $\mathrm{fpt}(\m)=c(\m)$, then $R$ is quasi-Gorenstein. 
Generalizing the argument of \cite[Theorem~2.7]{TW}, we are indeed able to prove this when the anti-canonical cover of $R$ is Noetherian. 
We also use the invariant $c(\m)$ in answering another question of Hirose, Watanabe, and Yoshida, \cite[Question~6.7]{HWY}; see Corollary~\ref{cor:toric charp}.

Our second proof uses the so-called Fedder-type criterion: Writing the standard graded ring $R$ as~$S/I$, for $S$ a polynomial ring and $I$ a homogeneous ideal, we characterize~$\nu_e(\m)$ in terms of the ideal $I^{[p^e]}:_SI$, and use this to show that $-\nu_e(\m)$ equals the degree of a minimal generator of the $(1-p^e)$-th symbolic power of $\omega_R$, see Theorem~\ref{theorem:omega}. 
Using this, we give explicit computations of $\mathrm{fpt}(\m)$ in many situations, e.g., for determinant rings and for $\Q$-Gorenstein rings, see Propositions~\ref{prop:determinantal} and~\ref{prop:fptQGor}. We also prove that if $(R,\m)$ is a $\Q$-Gorenstein normal domain, with index coprime to $p$, then the pair $(R,\m^{\mathrm{fpt}(\m)})$ is sharply $F$-pure; see Proposition~\ref{prop:critical value}.

Thus far we have discussed singularities in positive characteristic; we also prove an analogous result in characteristic zero. 
de~Fernex-Hacon \cite{dFH} extended the definition of log terminal and log canonical singularities to the non-$\Q$-Gorenstein setting, which can be regarded as the characteristic zero counterparts of strongly $F$-regular and $F$-pure rings. 
Using their definition, we formulate a characteristic zero analogue of Theorem~A as follows:

\begin{mainthm2}[Corollary~\ref{cor:char 0 main thm}] 
Let $R$ be a standard graded normal ring, with $R_0$ an algebraically closed field of characteristic zero. Set $\m$ to be the homogeneous maximal ideal of $R$. 
Assume that $X:=\Spec R$ has log canonical singularities in the sense of de~Fernex-Hacon;~set 
\[
\mathrm{lct}(\m)=\sup\{t \ge 0 \mid (X, \m^t) \textup{ is log canonical in the sense of de~Fernex-Hacon}\}.
\]
\begin{enumerate}[\quad\rm(1)]
\item Then $\mathrm{lct}(\m) \le -a(R)$.
\item Suppose, in addition, that the anti-canonical cover $\bigoplus_{n \ge 0}\sO_X(-nK_X)$ is Noetherian. 
Then $\mathrm{lct}(\m)=-a(R)$ if and only if $R$ is quasi-Gorenstein. 
\end{enumerate}
\end{mainthm2}

We remark that in the situation of Theorem~B, the anti-canonical cover is Noetherian whenever $X$ has log terminal singularities in the sense of de~Fernex-Hacon, or if $R$ is $\Q$-Gorenstein. Thus, Theorem~B gives an affirmative answer to a conjecture of De~Stefani-N\'u\~nez-Betancourt \cite[Conjecture~6.9]{dSNB}. 

In order to prove Theorem~B, we introduce a new invariant $\mathrm{d}(\ba)$ for an ideal $\ba$ of a normal variety $X$ with Du Bois singularities, in terms of a variant of multiplier modules, see Definition~\ref{def:db threshold}.
We are then able to employ the same strategy as in the first proof of Theorem~A, using $\mathrm{d}(\ba)$ in place of $c(\ba)$. 

\medskip

Throughout this paper, all rings are assumed to be Noetherian (except possibly for anti-canonical covers), commutative, with unity. By a \textit{standard graded ring}, we mean an $\N$-graded ring $R=\bigoplus_{n \ge 0}R_n$, with $R_0$ a field, such that $R$ is generated as an $R_0$-algebra by finitely many elements of $R_1$.

\section{Preliminaries on $F$-singularities}

In this section, we briefly review the theory of $F$-singularities. 
In order to state the definitions, we first introduce the following notation: 

Let $R$ be a ring of prime characteristic $p>0$. 
We denote by $R^{\circ}$ the set of elements of $R$ that are not in any minimal prime ideal.
Given an $R$-module $M$ and $e \in \N$, the $R$-module $F^e_*M$ is defined by the following two conditions: (i)~$F^e_*M=M$ as an abelian group, and (ii)~the $R$-module structure of $F^e_*M$ is given by $r \cdot x:=r^{p^e}x$ for $r \in R$ and $x \in F^e_*M$. 
We write elements of $F^e_*M$ in the form $F^e_*x$ with $x \in M$. 
The $e$-th iterated \textit{Frobenius map} is the $R$-linear map $F^e \colon R \to F^e_*R$ sending $x$ to $F^e_*x^{p^e}$. 
We say that $R$ is \textit{$F$-finite} if the Frobenius map is finite, that is, $F^1_*R$ is a finitely generated $R$-module. 
When $(R,\m)$ is local, the $e$-th iterated Frobenius map $F^e \colon R \to F^e_*R$ induces a map $F^e_{H^i_{\m}(R)} \colon H^i_{\m}(R) \to H^i_{\m}(R)$ for each~$i$. 

We recall the definition of classical $F$-singularities:

\begin{defn}
\label{def:classical F-sing}
Let $R$ be an $F$-finite reduced ring of prime characteristic $p>0$. 
\begin{enumerate}[\quad\rm(1)]
\item We say that $R$ is \textit{$F$-pure} if the Frobenius map $R \to F_*R$ splits as an $R$-linear map. 

\item We say that $R$ is \textit{strongly $F$-regular} if for every $c \in R^{\circ}$, there exists a power $q=p^e$ of $p$ such that the $R$-linear map $R \to F^e_*R$ sending $1$ to $F^e_*c$ splits. 

\item When $(R, \m)$ is local, we say that $R$ is \textit{$F$-injective} if $F_{H^i_{\m}(R)} \colon H^i_{\m}(R) \to H^i_{\m}(R)$ is injective for each $i$. In general, we say that $R$ is $F$-injective if the localization $R_{\m}$ is $F$-injective for each maximal ideal $\m$ of $R$.

\item When $(R, \m)$ is local, we say that $R$ is \textit{$F$-rational} if $R$ is Cohen-Macaulay and if for every $c \in R^{\circ}$, there exists $e \in \N$ such that $cF^e_{H^d_{\m}(R)} \colon H^d_{\m}(R) \to H^d_{\m}(R)$ sending $z$ to $cF^e_{H^d_{\m}(R)}(z)$ is injective. 
In general, we say that $R$ is $F$-rational if the localization $R_{\m}$ is $F$-rational for each maximal ideal $\m$ of $R$.
\end{enumerate}
\end{defn}

Next we generalize these to the pair setting, see \cite{HW, Sch, TW}:

\begin{defn}
Let $\ba$ be an ideal of an $F$-finite reduced ring $R$ of prime characteristic $p$ such that $\ba \cap R^{\circ} \ne \emptyset$. 

\begin{enumerate}[\quad\rm(1)]
\item Suppose that $R$ is local. 
For a real number $t \ge 0$, the pair $(R, \ba^t)$ is \textit{sharply $F$-pure} if there exist $q=p^e$ and $c \in \ba^{\lceil t(q-1) \rceil}$ such that the $R$-linear map $R \to F^e_*R$ sending $1$ to $F^e_*c$ splits. 
The pair $(R, \ba^t)$ is \textit{weakly $F$-pure} if there exist infinitely many $e \in \N$ and associated elements $c_e \in \ba^{\lfloor t(p^e-1) \rfloor}$ such that each $R$-linear map $R \to F^e_*R$ sending $1$ to $F^e_* c_e$ splits. 

When $R$ is not local, $(R, \ba^t)$ is said to be sharply $F$-pure (resp. weakly $F$-pure) if the localization $(R_{\m}, \ba_\m^t)$ at $\m$ is sharply $F$-pure (resp. weakly $F$-pure) for every maximal ideal $\m$ of $R$. 

\item Suppose that $R$ is $F$-pure. Then the \textit{$F$-pure threshold} $\mathrm{fpt}(\ba)$ of $\ba$ is defined as
\[
\mathrm{fpt}(\ba)=\sup\{t \in \R_{\ge 0} \mid (R, \ba^t) \textup{ is weakly $F$-pure}\}.
\]

\item Suppose that $R$ is a normal local domain and $\Delta$ is an effective $\Q$-divisor on $X:=\Spec R$. 
For a real number $t \ge 0$, the pair $((R, \Delta); \ba^t)$ is \textit{sharply $F$-pure} if there exists $q=p^e$ and $c \in \ba^{\lceil t(q-1) \rceil}$ such that the $R$-linear map
\[
R \to F^e_*\sO_X(\lceil (q-1) \Delta \rceil)\quad\text{ with }1 \mapsto F^e_*c
\]
splits. 
The pair $((R, \Delta);\ba^t)$ is \textit{weakly $F$-pure} if there exist infinitely many $e \in \N$ and associated elements $c_e \in \ba^{\lfloor t(p^e-1) \rfloor}$ such that each $R$-linear map 
\[
R \to F^e_*\sO_X(\lfloor (p^e-1) \Delta \rfloor)\quad\text{ with } 1 \mapsto F^e_* c_e
\]
splits. 
If, in addition, $\ba=\sO_X$, then we simply say that $(X,\Delta)$ is \textit{$F$-pure}. 

If $(R, \Delta)$ is $F$-pure, then the \textit{$F$-pure threshold} $\mathrm{fpt}(\Delta; \ba)$ of $\ba$ with respect to the pair $(R, \Delta)$ is defined by 
\[
\mathrm{fpt}(\Delta; \ba)=\sup\{t \in \R_{\ge 0} \mid ((R, \Delta); \ba^t) \textup{ is weakly $F$-pure}\}.
\]
\end{enumerate}
\end{defn}

\begin{rem}
\begin{enumerate}[\rm(1)]
\item Sharp $F$-purity implies weak $F$-purity. 
When $\ba=R$, the sharp $F$-purity and the weak $F$-purity of $(R, \ba^t)$ are equivalent to the $F$-purity of $R$. 
Suppose that $R$ is $F$-pure. 
It is easy to check that if $(R, \ba^t)$ 
is weakly $F$-pure with $t>0$, then $(R, \ba^{t-\epsilon})$ 
is sharply $F$-pure for every $t \ge \epsilon>0$ (cf.~\cite[Lemma~5.2]{Sch}). Thus, 
\[
\mathrm{fpt}(\ba)=\sup\{t \in \R_{\ge 0} \mid (R, \ba^t) \textup{ is sharply $F$-pure}\}.
\]
\item 
Our definition of the $F$-purity of $(R, \Delta)$ coincides with the one in \cite[Definition~2.1]{HW}. 
\end{enumerate}
\end{rem}

The following is a standard application of Matlis duality, which we will use in Section~\ref{sec:char p I}. 

\begin{lem}[{cf.~\cite[Proposition~2.4]{HW}}]
\label{lem:sharp F-purity}
Let $(R, \m)$ be a $d$-dimensional $F$-finite normal local ring of characteristic $p>0$, $\Delta$ be an effective $\Q$-divisor on $X=\Spec R$ and $\ba$ be a nonzero ideal of $R$. 
For any real number $t \ge 0$, the pair $((R, \Delta); \ba^t)$ is weakly $F$-pure if and only if there exist infinitely many $e \in \N$ and associated elements $c_e \in \ba^{\lfloor t(p^e-1) \rfloor}$ such that 
\[
c_eF^e_{X, \Delta} \colon H^d_{\m}(\omega_X) \xrightarrow{F^e_{X, \Delta}} H^d_{\m}(\sO_X(\lfloor p^eK_X+(p^e-1)\Delta \rfloor)) \xrightarrow{\times c_e} H^d_{\m}(\sO_X(\lfloor p^eK_X+(p^e-1)\Delta \rfloor))
\] 
is injective, where $F^e_{X, \Delta}$ is the map induced by the $R$-linear map $R \to F^e_*\sO_X(\lfloor (p^e-1) \Delta \rfloor)$ sending $1$ to $F^e_*1$. 
\end{lem}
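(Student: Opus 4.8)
The plan is to reduce the statement about weak $F$-purity of the pair $((R,\Delta);\ba^t)$ to the injectivity statement via Matlis duality, following the standard template of \cite[Proposition~2.4]{HW}. First I would fix $e \in \N$ and $c_e \in \ba^{\lfloor t(p^e-1)\rfloor}$, and observe that the splitting of the $R$-linear map $R \to F^e_*\sO_X(\lfloor(p^e-1)\Delta\rfloor)$ sending $1$ to $F^e_* c_e$ is, by the usual adjunction/Hom-tensor manipulation, equivalent to the surjectivity of the map
\[
\Hom_R\bigl(F^e_*\sO_X(\lfloor(p^e-1)\Delta\rfloor),\, E\bigr) \to \Hom_R(R, E) = E
\]
obtained by precomposition, where $E = E_R(R/\m)$ is the injective hull of the residue field. (A map $\phi\colon R \to N$ of $R$-modules splits iff $\Hom(N,E) \to \Hom(R,E)$ is surjective, since $E$ is a faithfully injective cogenerator.)

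Next I would identify the two sides of this Hom-dual map with local cohomology modules. By $F$-finiteness and Matlis duality over the complete local ring, $\Hom_R(F^e_*\sO_X(\lfloor(p^e-1)\Delta\rfloor), E)$ is identified with $F^e_*$ of the Matlis dual of $\sO_X(\lfloor(p^e-1)\Delta\rfloor)$; since $\omega_R \cong \sO_X(K_X)$ and the Matlis dual of a divisorial sheaf $\sO_X(D)$ is $H^d_\m(\sO_X(K_X - D))$ up to the $F^e_*$ twist, the relevant module becomes $H^d_\m(\sO_X(\lfloor p^e K_X + (p^e-1)\Delta\rfloor))$ — here one uses $p^e K_X + \lceil (p^e-1)\Delta\rceil \ge \lfloor p^e K_X + (p^e-1)\Delta \rfloor$ together with the fact that the canonical module is the intersection over all height-one primes, i.e. the reflexive hull is insensitive to the rounding, or more precisely that the dual of $\sO_X(D)$ depends only on the linear equivalence class so one may replace $\lceil(p^e-1)\Delta\rceil$ by $(p^e-1)\Delta$ inside the floor. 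Dually, $\Hom_R(R,E) = E$ corresponds to $H^d_\m(\omega_X)$. Under these identifications, the precomposition-with-$(\times F^e_* c_e)\circ F^e$ map becomes exactly the composite $c_e F^e_{X,\Delta}$ described in the statement: the Frobenius $F^e$ dualizes to the Grothendieck-trace-type map $F^e_{X,\Delta}$ on the $H^d_\m$ of the twisted sheaves, and multiplication by $c_e$ on $F^e_* R$ dualizes to multiplication by $c_e$ on the target local cohomology module.

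Then, since surjectivity of a map between Artinian modules over the complete local ring is Matlis-dual to injectivity of the dual map between Noetherian modules, "the map $R \to F^e_*\sO_X(\lfloor(p^e-1)\Delta\rfloor)$ with $1 \mapsto F^e_* c_e$ splits" is equivalent to "$c_e F^e_{X,\Delta}$ is injective." Taking this equivalence over the infinitely many $e$ allowed in the definition of weak $F$-purity gives the stated biconditional. The main obstacle I anticipate is purely bookkeeping rather than conceptual: getting the divisorial rounding exactly right — justifying that the Matlis dual of $F^e_*\sO_X(\lfloor(p^e-1)\Delta\rfloor)$ is $F^e_*H^d_\m(\sO_X(\lfloor p^e K_X + (p^e-1)\Delta\rfloor))$ and not some neighboring rounding — and checking that the dualization of the explicit map $1 \mapsto F^e_* c_e$ really is multiplication by $c_e$ composed with the Frobenius trace $F^e_{X,\Delta}$, with no sign or twist discrepancy. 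Both points are handled in \cite[Proposition~2.4]{HW} in the essentially equivalent setting, so I would verify the divisor-theoretic identities at each height-one prime (where everything is a DVR computation) and then invoke normality to conclude the global statement.
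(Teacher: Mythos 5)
Your reduction hinges on the claim that ``a map $\phi\colon R\to N$ splits iff $\Hom_R(N,E)\to\Hom_R(R,E)=E$ is surjective,'' and this is false: since $\Hom_R(-,E)$ is exact and faithful, surjectivity of the Matlis dual $\phi^{\vee}$ is equivalent to \emph{injectivity} of $\phi$, not to splitting. (Take $\phi\colon R\to R$, $1\mapsto x$ with $x$ a nonzerodivisor nonunit: $\phi^{\vee}$ is multiplication by $x$ on $E$, which is surjective because $E$ is divisible, yet $\phi$ does not split.) The error propagates: the Matlis dual of $R\to N$ with $N=F^e_*\sO_X(\lfloor(p^e-1)\Delta\rfloor)$ is a map $N^{\vee}\to E$ whose \emph{target} is $E\cong H^d_{\m}(\omega_X)$, whereas $c_eF^e_{X,\Delta}$ has $H^d_{\m}(\omega_X)$ as its \emph{source} --- the directions do not match --- and moreover $N^{\vee}\cong F^e_*H^d_{\m}(\sO_X(K_X-\lfloor(p^e-1)\Delta\rfloor))$ by local duality, which is not the sheaf $\sO_X(\lfloor p^eK_X+(p^e-1)\Delta\rfloor)$ appearing in the statement; no amount of reflexification reconciles the two, since the divisors differ by $(p^e-1)K_X+2\lfloor(p^e-1)\Delta\rfloor$.

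The map you should dualize is not $\phi$ but the evaluation map $\Hom_R(N,R)\to R$, $\psi\mapsto\psi(F^e_*c_e)$: for finitely generated $N$ over the (complete) local ring, $\phi$ splits precisely when some $\psi$ sends $F^e_*c_e$ to a unit, i.e.\ when this evaluation map is surjective. Surjectivity of a map of finitely generated modules is equivalent to injectivity of its Matlis dual $E\to\Hom_R(N,R)^{\vee}$, and now everything comes out right: Grothendieck duality for the finite map $F^e$ gives $\Hom_R(N,R)\cong F^e_*\sO_X((1-p^e)K_X-\lfloor(p^e-1)\Delta\rfloor)=F^e_*\sO_X(K_X-\lfloor p^eK_X+(p^e-1)\Delta\rfloor)$, whose Matlis dual by local duality is $F^e_*H^d_{\m}(\sO_X(\lfloor p^eK_X+(p^e-1)\Delta\rfloor))$, and under these identifications the dual of evaluation at $F^e_*c_e$ is exactly $c_eF^e_{X,\Delta}$. (The paper itself offers no written proof --- it records the lemma as a standard Matlis-duality argument, citing \cite[Proposition~2.4]{HW} --- but the standard argument is the one just sketched, or equivalently the purity criterion that $\phi$ splits iff $\phi\otimes\mathrm{id}_E$ is injective; your version of the duality step is the one point that genuinely fails.)
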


The following is a reformulation of the so-called ``Fedder-type criterion," that we will use in Section~\ref{sec:char p II}.

\begin{prop}
\label{prop:Fedder criteria}
Let $S=k[x_1, \dots, x_n]$ be a polynomial ring over an $F$-finite field $k$ of characteristic $p>0$ and $I$ be a homogeneous ideal of $S$. 
Suppose that $R:=S/I$ is $F$-pure. 
Given an $e \in \N$ and a homogeneous ideal $\ba \subset S$ containing $I$ such that $\ba R \cap R^{\circ} \ne \emptyset$, we define the integer $\nu_e(\ba)$ by 
\[
\nu_e(\ba):=\max \{r \ge 0 \mid \ba^r(I^{[p^e]}:I) \not\subset (x_1^{p^e}, \dots, x_n^{p^e})\}.
\]
\begin{enumerate}[\quad\rm(1)]
\item For a real number $t\ge 0$, the pair $(R, (\ba R)^t)$ is sharply (resp. weakly) $F$-pure if and only if $\nu_e(\ba)\ge \lceil (p^e-1)t \rceil$ for some $e$ (resp. $\nu_e(\ba)\ge \lfloor (p^e-1)t \rfloor$ for infinitely many $e$).

\item $\mathrm{fpt}(\ba R)=\lim_{e \to \infty}\nu_e(\ba)/p^e$. 
\end{enumerate}
\end{prop}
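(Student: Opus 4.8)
The plan is to deduce both parts from \emph{Fedder's Lemma}, and then to run a standard limit argument; the substance lies entirely in the first step.

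\emph{Step 1: the Fedder dictionary.} For a homogeneous element $c \in S$ with image $\bar c \in R$, Fedder's Lemma asserts that the $R$-linear map $R \to F^e_*R$ sending $1$ to $F^e_*\bar c$ splits if and only if $c\,(I^{[p^e]}:_SI) \not\subset (x_1^{p^e},\dots,x_n^{p^e})$. One proves this by using that $F^e_*S$ is free over $S$ on the monomials $x_1^{a_1}\cdots x_n^{a_n}$ with $0 \le a_i < p^e$, so that $\Hom_S(F^e_*S,S)$ is a rank-one free $F^e_*S$-module generated by the map $\Phi^e$ dual to $x_1^{p^e-1}\cdots x_n^{p^e-1}$, and then identifying $\Hom_R(F^e_*R,R)$ with the $R$-submodule $\{\,F^e_*d\cdot\Phi^e : d \in I^{[p^e]}:_SI\,\}$ modulo those maps with $d \in I^{[p^e]}$; a splitting of $1 \mapsto F^e_*\bar c$ then corresponds to a $d \in I^{[p^e]}:_SI$ with $\Phi^e(F^e_*(dc))$ a unit, i.e.\ with $dc \notin (x_1^{p^e},\dots,x_n^{p^e})$. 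Next, since $R$ is standard graded and $\ba$, hence $\ba R$, is homogeneous, it is standard that $(R,(\ba R)^t)$ is sharply (resp.\ weakly) $F$-pure if and only if its localization at the homogeneous maximal ideal $\m$ is, and that the element witnessing this in the definition may then be taken to be the image of a homogeneous element of $\ba^{\lceil t(p^e-1)\rceil}$ (resp.\ $\ba^{\lfloor t(p^e-1)\rfloor}$); altering such a lift modulo $I$ is harmless because $I\,(I^{[p^e]}:_SI) \subset I^{[p^e]} \subset (x_1^{p^e},\dots,x_n^{p^e})$. Combining these facts with $(\ba R)^s = \ba^sR$, we obtain: $(R,(\ba R)^t)$ is sharply $F$-pure if and only if $\ba^{\lceil t(p^e-1)\rceil}(I^{[p^e]}:_SI) \not\subset (x_1^{p^e},\dots,x_n^{p^e})$ for some $e$, and weakly $F$-pure if and only if the analogous statement with $\lfloor t(p^e-1)\rfloor$ holds for infinitely many $e$.

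\emph{Step 2: Part (1).} One has $0 \le \nu_e(\ba) \le n(p^e-1)$: the lower bound holds because $R$ is $F$-pure, so Fedder's Lemma gives $(I^{[p^e]}:_SI) \not\subset (x_1^{p^e},\dots,x_n^{p^e})$; the upper bound holds because $\ba$ is a proper homogeneous ideal, hence generated in degrees $\ge 1$, so that $\ba^r \subset (x_1,\dots,x_n)^r \subset (x_1^{p^e},\dots,x_n^{p^e})$ as soon as $r \ge n(p^e-1)+1$. Since $\ba^{r'} \supseteq \ba^r$ for $r' \le r$, the condition $\ba^r(I^{[p^e]}:_SI) \not\subset (x_1^{p^e},\dots,x_n^{p^e})$ holds for every $r \le \nu_e(\ba)$, so that $\nu_e(\ba) \ge r$ if and only if that condition holds. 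Taking $r = \lceil t(p^e-1)\rceil$, resp.\ $r = \lfloor t(p^e-1)\rfloor$, in the two equivalences of Step~1 now yields Part~(1).

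\emph{Step 3: Part (2).} Composing the split map $R \to F^e_*R$ attached to a witness of $\nu_e(\ba)$ with the one attached to a witness of $\nu_1(\ba)$ --- i.e.\ applying $F^e_*$ to the latter and precomposing --- yields a split map $R \to F^{e+1}_*R$ whose associated element lies in $\ba^{\nu_1(\ba)+p\,\nu_e(\ba)}$; hence $\nu_{e+1}(\ba) \ge p\,\nu_e(\ba)$, so the sequence $\nu_e(\ba)/p^e$ is non-decreasing, and by Step~2 it is bounded above by $n$. Thus $L := \lim_{e\to\infty}\nu_e(\ba)/p^e = \sup_e \nu_e(\ba)/p^e$ exists. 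If $(R,(\ba R)^t)$ is weakly $F$-pure, Part~(1) gives $\nu_e(\ba) \ge \lfloor t(p^e-1)\rfloor$ for infinitely many $e$, whence $\nu_e(\ba)/p^e \ge t(1-p^{-e})-p^{-e}$ along a subsequence and therefore $L \ge t$; consequently $\mathrm{fpt}(\ba R) \le L$. Conversely, for any $t < L$, convergence of $\nu_e(\ba)/p^e$ to $L$ gives $\nu_e(\ba) > tp^e \ge \lfloor t(p^e-1)\rfloor$ for all large $e$, so $(R,(\ba R)^t)$ is weakly $F$-pure by Part~(1), and hence $\mathrm{fpt}(\ba R) \ge t$. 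Letting $t \uparrow L$ proves Part~(2).

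\emph{Where the difficulty lies.} Steps~2 and~3 are routine and parallel the classical computations of $F$-pure thresholds (cf.\ \cite{TW}). The crux is Step~1: stating Fedder's Lemma precisely in the graded setting, and justifying that sharp (resp.\ weak) $F$-purity of $(R,(\ba R)^t)$ --- a priori a condition imposed at every maximal ideal of $R$, with an unrestricted witnessing element --- collapses to the single homogeneous colon-ideal condition over $S$. This uses the $\mathbb{G}_m$-equivariance of the situation to reduce to the vertex $\m$, the freeness of $F^e_*S$ over the regular ring $S$ to compute $\Hom_S(F^e_*S,S)$, and the homogeneity of all ideals involved to replace ``some element $c$'' by ``some homogeneous element $c$.''
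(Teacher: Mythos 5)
Your proof is correct and follows essentially the same route as the paper, which disposes of this proposition by citing the Fedder-type criterion of \cite[Lemma~3.9]{Ta} and observing that the argument transfers to the graded setting --- precisely the content of your Step~1 --- while your Steps~2 and~3 are the standard boundedness-and-monotonicity limit argument. The only imprecision is in Step~1: when $k$ is $F$-finite but not perfect, $F^e_*S$ is free over $S$ on the elements $F^e_*(\lambda\, x_1^{a_1}\cdots x_n^{a_n})$ with $\lambda$ running over a basis of $k$ over $k^{p^e}$, not merely on the monomials; this is harmless (one may pass to the perfection by flat base change, as the paper does in the proof of Theorem~\ref{theorem:omega}) and does not affect the colon-ideal criterion.
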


\begin{proof}
It follows from \cite[Lemma~3.9]{Ta} (where the criterion for $F$-purity is stated in the local setting, but the same argument works in the graded setting). 
\end{proof}

In order to generalize the definition of $F$-rational and $F$-injective rings to the pair setting, we use the notion of $\ba^t$-tight closure and $\ba^t$-sharp Frobenius closure. 

\begin{defn}
Let $\ba$ be an ideal of a reduced ring $R$ of prime characteristic $p>0$ such that $\ba \cap R^{\circ} \ne \emptyset$, and $t\ge 0$ be a real number. 
\begin{enumerate}[\quad\rm(1)]
\item (\cite[Definition~6.1]{HY})
For an ideal $I \subseteq R$, the $\ba^t$-\textit{tight closure} $I^{*\ba^t}$ of $I$ is defined to be the ideal of $R$ consisting of all elements $x \in R$ for which there exists $c \in R^{\circ}$ such that $c \ba^{\lceil t(q-1) \rceil}x^q \subseteq I^{[q]}$ for all large $q=p^e$. 

\item (\cite[Definition~3.10]{Sch}) 
For an ideal $I \subset R$, the $\ba^t$-\textit{sharp Frobenius closure} $I^{F \sharp \ba^t}$ of $I$ is defined to be the ideal of $R$ consisting of all elements $x \in R$ such that $\ba^{\lceil t(q-1) \rceil}x^q \subseteq I^{[q]}$ for all large $q=p^e$. 

\item Suppose that $(R, \m)$ is local. 
The $\ba^t$-\textit{sharp Frobenius closure} $0^{F \sharp \ba^t}_{H^i_{\m}(R)}$ of the zero submodule in $H^i_{\m}(R)$ is defined to be the submodule of $H^i_{\m}(R)$ consisting of all elements $z \in H^i_{\m}(R)$ such that $\ba^{\lceil t(q-1) \rceil}F^e_{H^i_{\m}(R)}(z)=0$ in $H^i_{\m}(R)$ for all large $q=p^e$. 
\end{enumerate}
\end{defn}

The following technical remark is useful for the study of the invariant $c(\ba)$, which will be introduced in Section~\ref{sec:char p I}. 
\begin{rem}
\label{remark:hara-takagi}
Let $(R, \m)$ be an $F$-finite reduced local ring of characteristic $p>0$. 
Let $\mathrm{Tr}^e \colon F^e_*\omega_R \to \omega_R$ be the $e$-th iteration of the \textit{trace map} on $R$, that is, the $\omega_R$-dual of the $e$-th iterated Frobenius map $F^e \colon R \to F^e_*R$. 
It then follows from an argument similar to the proof of \cite[Lemma~2.1]{HT} that $0^{F \sharp \ba^t}_{H^d_{\m}(R)}=0$ if and only if 
\[
\sum_{e \ge e_0}\mathrm{Tr}^e(F^e_*(\ba^{\lceil t(p^e-1) \rceil}\omega_R))=\omega_R
\]
for every integer $e_0 \ge 0$. 
\end{rem}

\begin{defn}
Let $R$ be an $F$-finite Cohen-Macaulay reduced ring of prime characteristic $p>0$, $\ba$ be an ideal of $R$ such that $\ba \cap R^{\circ} \ne \emptyset$, and $t \ge 0$ be a real number. 
\begin{enumerate}[\quad\rm(1)]
\item (\cite[Definition~6.1]{ST}) When $R$ is local, $(R, \ba^t)$ is said to be \textit{$F$-rational} if $J^{*\ba^t}=J$ for every ideal $J$ generated by a full system of parameters for $R$. 

\item When $R$ is local, $(R, \ba^t)$ is said to be \textit{sharply $F$-injective} if $J^{F\sharp \ba^t}=J$ for every ideal $J$ generated by a full system of parameters for $R$. 
\end{enumerate}

When $R$ is not local, the pair $(R, \ba^t)$ is said to be $F$-rational (resp. sharply $F$-injective) if the localization $(R_{\m}, \ba_{\m}^t)$ at $\m$ is $F$-rational (resp. sharply $F$-injective) for every maximal ideal $\m$ of $R$. 
When $\ba=R$, this definition coincides with the one in Definition~\ref{def:classical F-sing}. 
\end{defn}

We review basic properties of sharply $F$-injective pairs and $F$-rational pairs. 
\begin{lem}
\label{lemma:basic pair}
Let $R$ be an $F$-finite reduced ring of prime characteristic $p>0$, $\ba$ be an ideal of $R$ such that $\ba \cap R^{\circ} \ne \emptyset$, and $t \ge 0$ be a real number. Set $d:=\dim R$.
\begin{enumerate}[\ \rm(1)]
\item Suppose that $(R, \m)$ is Cohen-Macaulay. Then the following are equivalent:
\begin{enumerate}[\ \rm(a)]
\item $(R, \ba^t)$ is sharply $F$-injective (resp. $F$-rational).

\item $J^{F\sharp \ba^t}=J$ (resp. $J^{*\ba^t}=J$) for an ideal $J$ generated by a full system of parameters.

\item $0^{F \sharp \ba^t}_{H^d_{\m}(R)}=0$ (resp. $0^{*\ba^t}_{H^d_{\m}(R)}=0$). 
\end{enumerate}

\item Suppose that $R$ is $F$-rational. 
\begin{enumerate}[\ \rm(a)]
\item There exists a rational number $t_0>0$ such that $(R, \ba^{t_0})$ is sharply $F$-injective. 

\item If $(R, \ba^t)$ is sharply $F$-injective with $t>0$, then $(R, \ba^{t-\epsilon})$ is $F$-rational for every $t \ge \epsilon>0$. 
\end{enumerate}

\item Suppose that $(R, \m)$ is local. If $(R, \ba^t)$ is sharply $F$-pure, then $0^{F \sharp \ba^t}_{H^i_{\m}(R)}=0$ for every~$i$. When $R$ is quasi-Gorenstein, $(R, \ba^t)$ is sharply $F$-pure if and only if $0^{F \sharp \ba^t}_{H^d_{\m}(R)}=0$. 

\item Let $(R, \m) \into (S, \n)$ be a flat local homomorphism of $F$-finite reduced local rings of characteristic $p>0$. 
Suppose that $S/\m S$ is a field which is a separable algebraic extension of $R/\m$. 
Then $0^{F \sharp \ba^t}_{H^d_{\m}(R)}=0$ if and only if $0^{F \sharp (\ba S)^t}_{H^d_{\n}(S)}=0$. 
\end{enumerate}
\end{lem}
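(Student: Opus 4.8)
The plan is to translate all four statements into computations inside the top local cohomology module $H^d_{\m}(R)$, using throughout the presentation $H^d_{\m}(R)=\varinjlim_n R/(x_1^n,\dots,x_d^n)$ attached to a full system of parameters $x_1,\dots,x_d$, and, for parts (3)--(4), the trace-map description in Remark~\ref{remark:hara-takagi}. For part (1), the Cohen-Macaulay hypothesis makes $x_1,\dots,x_d$ a regular sequence, so the transition maps $R/(x_1^n,\dots,x_d^n)\to R/(x_1^{n+1},\dots,x_d^{n+1})$ are injective and a routine colon computation shows, writing $J_n=(x_1^n,\dots,x_d^n)$, that the class of $u$ in $H^d_{\m}(R)$ vanishes exactly when $u\in J_n$, and that $\ba^{\lceil t(q-1)\rceil}F^e_{H^d_{\m}(R)}$ annihilates that class exactly when $\ba^{\lceil t(q-1)\rceil}u^q\subseteq J_n^{[q]}$. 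Substituting this into the definitions identifies $0^{F\sharp\ba^t}_{H^d_{\m}(R)}$ (and, via the pair analogue of the standard tight-closure dictionary, $0^{*\ba^t}_{H^d_{\m}(R)}$; cf.~\cite{ST}) with the closure data of parameter ideals, and since $H^d_{\m}(R)$ is intrinsic to $R$, one obtains (a)$\Leftrightarrow$(b)$\Leftrightarrow$(c) at once in both the sharply $F$-injective and the $F$-rational cases.

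For part (2)(a), I would choose $c\in\ba\cap R^{\circ}$; $F$-rationality yields $e_0$ with $cF^{e_0}_{H^d_{\m}(R)}$ injective, and iterating shows $c^{N_k}F^{ke_0}_{H^d_{\m}(R)}$ is injective for all $k\ge1$, where $N_k=(p^{ke_0}-1)/(p^{e_0}-1)$. Taking $t_0=1/(p^{e_0}-1)$, so that $\lceil t_0(p^{ke_0}-1)\rceil=N_k$, any $z\in 0^{F\sharp\ba^{t_0}}_{H^d_{\m}(R)}$ satisfies $c^{N_k}F^{ke_0}_{H^d_{\m}(R)}(z)=0$ for $k\gg0$, hence $z=0$; thus $(R,\ba^{t_0})$ is sharply $F$-injective. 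For part (2)(b), given $z\in 0^{*\ba^{t-\epsilon}}_{H^d_{\m}(R)}$, witnessed by $c\in R^{\circ}$ with $c\,\ba^{\lceil(t-\epsilon)(q-1)\rceil}F^e_{H^d_{\m}(R)}(z)=0$ for all $q=p^e\gg0$, I would fix $e_1$ with $cF^{e_1}_{H^d_{\m}(R)}$ injective ($F$-rationality again) and observe, for $e\gg0$ and $b\in\ba^{\lceil t(p^e-1)\rceil}$, that the elementary inequality $p^{e_1}\lceil t(p^e-1)\rceil\ge(t-\epsilon)(p^{e+e_1}-1)$ forces $b^{p^{e_1}}\in\ba^{\lceil(t-\epsilon)(p^{e+e_1}-1)\rceil}$; then $cF^{e_1}_{H^d_{\m}(R)}\bigl(bF^e_{H^d_{\m}(R)}(z)\bigr)=c\,b^{p^{e_1}}F^{e+e_1}_{H^d_{\m}(R)}(z)=0$, and injectivity of $cF^{e_1}_{H^d_{\m}(R)}$ gives $bF^e_{H^d_{\m}(R)}(z)=0$. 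Hence $\ba^{\lceil t(p^e-1)\rceil}F^e_{H^d_{\m}(R)}(z)=0$ for $e\gg0$, i.e.\ $z\in 0^{F\sharp\ba^t}_{H^d_{\m}(R)}=0$ by part (1), so $z=0$ and $(R,\ba^{t-\epsilon})$ is $F$-rational.

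For part (3), if $(R,\ba^t)$ is sharply $F$-pure, witnessed by $e_0$ and $c\in\ba^{\lceil t(p^{e_0}-1)\rceil}$, composing split maps produces a split homomorphism $R\to F^{ke_0}_*R$ sending $1$ to $F^{ke_0}_*(c^{N_k})$, with $c^{N_k}\in\ba^{\lceil t(p^{ke_0}-1)\rceil}$ since $N_k\lceil t(p^{e_0}-1)\rceil\ge t(p^{ke_0}-1)$; applying $H^i_{\m}(-)$ and identifying $H^i_{\m}(F^{ke_0}_*R)\cong F^{ke_0}_*H^i_{\m}(R)$, the splitting recovers $z$ from $c^{N_k}F^{ke_0}_{H^i_{\m}(R)}(z)$, which vanishes for $k\gg0$ whenever $z\in 0^{F\sharp\ba^t}_{H^i_{\m}(R)}$; hence that module is $0$ for every $i$. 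When $R$ is quasi-Gorenstein I would fix $\omega_R\cong R$, so that $\Hom_R(F^e_*\omega_R,\omega_R)\cong\Hom_R(F^e_*R,R)$ is free of rank one over $F^e_*R$ generated by $\mathrm{Tr}^e$; then $(R,\ba^t)$ is sharply $F$-pure precisely when $\mathrm{Tr}^e(F^e_*(\ba^{\lceil t(p^e-1)\rceil}\omega_R))=\omega_R$ for some $e$, whereas Remark~\ref{remark:hara-takagi} (with $e_0=1$) turns the hypothesis $0^{F\sharp\ba^t}_{H^d_{\m}(R)}=0$ into $\sum_{e\ge1}\mathrm{Tr}^e(F^e_*(\ba^{\lceil t(p^e-1)\rceil}\omega_R))=\omega_R$; since $R$ is local and this is a sum of submodules of $\omega_R\cong R$, one summand already equals $\omega_R$, giving sharp $F$-purity.

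For part (4), $S/\m S$ being a field forces $\m S=\n$, hence $\dim S=d$ and $H^d_{\n}(S)=H^d_{\m}(R)\otimes_R S$ by flat base change, compatibly with Frobenius; and separability gives $\Omega_{S/R}=0$, so $R\to S$ is ind-\'etale and therefore $F^e_*S\cong F^e_*R\otimes_R S$, $\omega_S\cong\omega_R\otimes_R S$, $\mathrm{Tr}^e_S=\mathrm{Tr}^e_R\otimes_R S$. Feeding this into the trace criterion of Remark~\ref{remark:hara-takagi} on both sides and using that $S$ is faithfully flat over $R$ yields the equivalence; alternatively, ``$0^{F\sharp(\ba S)^t}_{H^d_{\n}(S)}=0\Rightarrow 0^{F\sharp\ba^t}_{H^d_{\m}(R)}=0$'' is immediate from faithful flatness, and the reverse implication reduces, after completing $R$ and writing $S$ as a filtered union of module-finite free \'etale extensions, to a direct check. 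The step I expect to be most delicate is the numerical bookkeeping in part (2)(b): one must compare $\ba^{\lceil t(p^e-1)\rceil}$ with $\ba^{\lceil(t-\epsilon)(p^{e+e_1}-1)\rceil}$ after a $p^{e_1}$-th power, with the ``$-\epsilon$'' only barely absorbing the Frobenius shift, and the required inequality holding solely for $e$ large. In part (4) the point needing care is that the canonical module and the trace map base change along $R\to S$ — which is precisely what separability, via ind-\'etaleness, guarantees; parts (1) and (3) are then essentially bookkeeping on top of the direct-limit description of $H^d_{\m}(R)$ and Remark~\ref{remark:hara-takagi}.
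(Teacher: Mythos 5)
Your proof follows essentially the same route as the paper's: part (1) is the standard identification of closures of parameter ideals with closures of zero in $H^d_{\m}(R)$ (the paper simply cites \cite[Lemma~6.3]{ST} for this), part (2)(a) is verbatim the paper's argument, part (3) is the paper's argument with the composition-of-splittings step made explicit and with the quasi-Gorenstein case run through the trace-generator description of $\Hom_R(F^e_*R,R)$ rather than through \cite[Theorem~4.1]{Sch}, and part (4) rests, as in the paper, on base-changing the trace map and faithful flatness. Two remarks. In (2)(b) you replace the paper's appeal to ``$1$ is a parameter $\ba^{t-\epsilon}$-test element'' (\cite[Lemma~6.8]{ST}) by a direct iteration using the injectivity of $cF^{e_1}_{H^d_{\m}(R)}$ for a single $e_1$; your estimate $p^{e_1}\lceil t(p^e-1)\rceil\ge(t-\epsilon)(p^{e+e_1}-1)$ for $e\gg0$ amounts to $\epsilon(p^{e+e_1}-1)\ge t(p^{e_1}-1)$, which is correct, so this is a valid and self-contained variant. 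In (4), the claim that $\Omega_{S/R}=0$ makes $R\to S$ ind-\'etale is not literally true: the completion $R\to\widehat R$ satisfies all the hypotheses and is not ind-\'etale. The base-change facts you actually need --- $F^e_*S\cong F^e_*R\otimes_RS$, $\omega_S\cong\omega_R\otimes_RS$, and $\mathrm{Tr}^e_S\cong\mathrm{Tr}^e_R\otimes_RS$ --- do hold in this setting, but they should be justified as the paper does, by first reducing to $S$ complete and invoking the proof of \cite[Lemma~1.5~(2)]{SrT}, rather than by an ind-\'etaleness claim.
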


\begin{proof}
We may assume throughout that $(R, \m)$ is local. Let $J$ be an ideal generated by a full system of parameters for $R$. 

(1) The $F$-rational case follows from \cite[Lemma~6.3]{ST} and the sharp $F$-injective case follows from an analogous argument. 

(2) First we will show (a). 
Fix a nonzero element $f \in \ba$. 
Since $R$ is $F$-rational, there exists $e_0 \in \N$ such that $fF^{e_0}_{H^d_{\m}(R)} \colon H^d_{\m}(R) \to H^d_{\m}(R)$ is injective. 
Then for each $n \in \N$, the map 
\[
f^{1+p^{e_0}+\cdots+p^{(n-1)e_0}}F^{ne_0}_{H^d_{\m}(R)} \colon H^d_{\m}(R) \to H^d_{\m}(R)
\] 
is also injective. 
Set $t_0=1/(p^{e_0}-1)$ and let $z \in 0^{F \sharp \ba^{t_0}}_{H^d_{\m}(R)}$. 
Since 
\[
f^{1+p^{e_0}+\cdots+p^{(n-1)e_0}}F^{ne_0}_{H^d_{\m}(R)}(z) \in \ba^{t_0(p^{ne_0}-1)}F^{ne_0}_{H^d_{\m}(R)}(z)=0
\] 
for sufficiently large $n$, one has $z=0$ by the injectivity of $f^{1+p^{e_0}+\cdots+p^{(n-1)e_0}}F^{ne_0}_{H^d_{\m}(R)}$. 
It follows that $0^{F \sharp \ba^{t_0}}_{H^d_{\m}(R)}=0$. 

Next we will show (b). 
Let $x \in J^{* \ba^{t-\epsilon}}$. 
Since $1$ is a parameter $\ba^{t-\epsilon}$-test element by \cite[Lemma~6.8]{ST} (see \cite[Definition~6.6]{ST} for the definition of parameter $\ba^{t-\epsilon}$-test elements), 
$\ba^{\lceil t(q-1) \rceil}x^q \subseteq \ba^{\lceil (t-\epsilon)q \rceil}x^q \subseteq J^{[q]}$ for all sufficiently large $q=p^e$. 
Then the sharp $F$-injectivity of $(R, \ba^t)$ implies that $x \in J$, that is, $J^{* \ba^{t-\epsilon}}=J$. 

(3) Let $z \in 0^{F \sharp \ba^t}_{H^i_{\m}(R)}$. 
Since $(R, \ba^t)$ is sharply $F$-pure, there exist a sufficiently large $q=p^e$ and $c \in \ba^{\lceil t(p^e-1) \rceil}$ such that the $R$-linear map $R \to F^e_*R$ sending $1$ to $F^e_*c$ splits, and in particular, 
$cF^e_{H^i_{\m}(R)} \colon H^i_{\m}(R) \to H^i_{\m}(R)$ is injective. 
Then $z$ has to be zero, because $cF^e_{H^i_{\m}(R)}(z) \in \ba^{\lceil t(p^e-1) \rceil} F^e_{H^i_{\m}(R)}(z)=0$. 
That is, $0^{F \sharp \ba^t}_{H^i_{\m}(R)}=0$. 

For the latter assertion, suppose that $R$ is quasi-Gorenstein. 
Then by \cite[Theorem~4.1]{Sch}, $(R, \ba^t)$ is sharply $F$-pure if and only if for infinitely many $q=p^e$, there exists $c \in \ba^{\lceil t(q-1) \rceil}$ such that $cF^e_{H^d_{\m}(R)} \colon H^d_{\m}(R) \to H^d_{\m}(R)$ is injective. 
Looking at the socle of $H^d_{\m}(R)$, we see that this condition is equivalent to saying that $0^{F \sharp \ba^{t}}_{H^d_{\m}(R)}=0$. 

(4) Since $H^{d}_{\n}(S)$ does not change by passing to the completion of $S$, we may assume that $S$ is complete. 
Let $\mathrm{Tr}_R^e \colon F^e_*\omega_R \to \omega_R$ (resp. $\mathrm{Tr}_S^e \colon F^e_*\omega_S \to \omega_S$) denote the $e$-th iteration of the trace map on $R$ (resp. $S$). 
It then follows from the proof of \cite[Lemma~1.5~(2)]{SrT} that $\mathrm{Tr}_R^e \otimes_R S \colon F^e_*\omega_R \otimes_R S \to \omega_R \otimes_R S$ is isomorphic to $\mathrm{Tr}_S^e$ for each $e \in \N$. 
By Remark~\ref{remark:hara-takagi}, $0^{F \sharp \ba^t}_{H^d_{\m}(R)}=0$ if and only if 
\[
\bigoplus_{e \ge e_0}\mathrm{Tr}_R^e \colon \bigoplus_{e \ge e_0} F^e_*(\ba^{\lceil t (p^e-1) \rceil}\omega_R) \to \omega_R
\]
is surjective for every integer $e_0 \ge 0$. 
Tensoring with $S$, we see that this condition is equivalent to the surjectivity of 
\[
\bigoplus_{e \ge 0} \mathrm{Tr}_S^e \colon \bigoplus_{e \ge e_0} F^e_*(\ba^{\lceil t(p^e-1) \rceil}\omega_S) \to \omega_S
\]
for every $e_0 \ge 0$, which holds by Remark~\ref{remark:hara-takagi} again if and only if $0^{F \sharp (\ba S)^t}_{H^d_{\n}(S)}=0$. 
\end{proof}

\section{Positive characteristic case I}
\label{sec:char p I}

We introduce a new invariant of singularities in positive characteristic, and study its basic properties. 
Using this, we give a partial answer to Conjecture~\ref{conj:HWY}. 

\begin{defn}
\label{def:thresholds}
Let $(R, \m)$ be a $d$-dimensional $F$-finite $F$-injective local ring of characteristic $p>0$, $\ba$ be an ideal of $R$ such that $\ba \cap R^{\circ} \ne \emptyset$. For each integer~$i$, the threshold $c_i(\ba)$ is defined by 
\[
c_i(\ba)=\sup\{t \in \R_{\ge 0} \bigm| 0^{F\sharp \ba^t}_{H^i_{\m}(R)}=0\}.
\]
Note that $c_i(\ba)=\infty$ when $H^i_{\m}(R)=0$. 
Also, we simply denote $c_d(\ba)$ by $c(\ba)$. 
\end{defn}

\begin{rem}
\label{rem:graded case}
Let $R$ be an $\N$-graded ring with $R_0$ an $F$-finite field of characteristic $p>0$, and~$\m$ the homogeneous maximal ideal of $R$. 
Then we can define $c_i(\m)$ similarly, that is, 
\[
c_i(\m)=\sup\{t \in \R_{\ge 0} \bigm| 0^{F\sharp \m^t}_{H^i_{\m}(R)}=0\}.
\]
Since $H^i_{\m}(R) \cong H^i_{\m R_{\m}}(R_{\m})$, we have the equality $c_i(\m)=c_i(\m R_{\m})$. 
\end{rem}

\begin{lem}
\label{lem:basic}
Let the notation be the same as in Definition~\ref{def:thresholds}. 
\begin{enumerate}[\quad\rm(1)]
\item If $R$ is Cohen-Macaulay, then 
\[
c(\ba)=\sup\{t \in \R_{\ge 0} \mid \textup{$(R, \ba^t)$ is sharply $F$-injective}\}.
\]

\item If $R$ is $F$-rational, then 
\[
c(\ba)=\sup\{t \in \R_{\ge 0} \mid \textup{$(R, \ba^t)$ is $F$-rational}\}.
\]

\item Suppose that $R$ is $F$-pure. Then $c_i(\ba) \ge \mathrm{fpt}(\ba)$ for each $i$. 
In addition, if $R$ is quasi-Gorenstein, then $c(\ba)=\mathrm{fpt}(\ba)$. 

\item $c(\ba)$ is less than or equal to the height $\mathrm{ht}\; \ba$ of $\ba$. 

\item Suppose that $R$ is Cohen-Macaulay and the residue field $R/\m$ is infinite. 
If $J \subset R$ is a minimal reduction of $\m$, then $\m^{d+1-\lceil c(\m) \rceil} \subseteq J$. 

\item Suppose that $R$ is Cohen-Macaulay. 
If $c(\m)>d-1$, then $R$ is regular and in particular $c(\m)=d$. 
\end{enumerate}
\end{lem}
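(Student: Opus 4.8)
The plan is to derive regularity from the criterion for regularity recalled in the introduction, namely that a local ring $(R,\m)$ is regular if and only if $\mathrm{fpt}(\m) > \dim R - 1$; the difficulty is that the hypothesis is about $c(\m)$, not $\mathrm{fpt}(\m)$, and in general $c(\m)$ can be strictly larger than $\mathrm{fpt}(\m)$ (the inequality $c(\m) \ge \mathrm{fpt}(\m)$ from part~(3) goes the wrong way). So the first step is to reduce to the $F$-pure case: since $R$ is Cohen-Macaulay with $c(\m) = c_d(\m) > d-1 \ge 0$, we have $0^{F\sharp \m^t}_{H^d_\m(R)} = 0$ for some $t>0$, in particular $0^{F\sharp}_{H^d_\m(R)} = 0$ (take $t$ small, or $\ba = R$), which by part~(1) says $(R,\m^t)$ is sharply $F$-injective for some $t > d-1$; since $R$ is Cohen-Macaulay this forces $R$ itself to be $F$-injective, and in fact the nonvanishing of the $\m^t$-sharp-Frobenius action on the top local cohomology with $t$ large should be leveraged to get a genuine Frobenius splitting. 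Concretely, I would argue that sharp $F$-injectivity of $(R,\m^t)$ for $t$ large implies $(R,\m^{t-\epsilon})$ is sharply $F$-pure for $t > \epsilon > 0$ — this is the parallel, for $F$-injectivity/$F$-purity, of Remark on p.~4 relating weak and sharp $F$-purity, and here it works because Cohen-Macaulayness plus the socle argument (as in the proof of Lemma~\ref{lemma:basic pair}(3)) lets one upgrade an injective Frobenius action on $H^d_\m(R)$ to a splitting. This shows $R$ is $F$-pure and $\mathrm{fpt}(\m) \ge c(\m) - \epsilon$ for every $\epsilon > 0$, hence $\mathrm{fpt}(\m) > d-1$.

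Once $R$ is known to be $F$-pure with $\mathrm{fpt}(\m) > d-1 = \dim R - 1$, the criterion recalled in the introduction (a local ring is regular iff $\mathrm{fpt}(\m) > \dim R - 1$) immediately gives that $R$ is regular. It then remains to compute $c(\m)$ exactly: for a regular local ring $R = (R,\m)$ of dimension $d$ we have $R$ quasi-Gorenstein, so by part~(3) $c(\m) = \mathrm{fpt}(\m)$, and $\mathrm{fpt}(\m) = d$ since $\nu_e(\m) = d(p^e - 1)$ for a regular ring — equivalently, the Frobenius power $\m^{[p^e]} \colon \m$ has $\m^{d(p^e-1)}$ as its "critical" piece, which is the standard computation of the $F$-pure threshold of the maximal ideal of a polynomial ring. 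Thus $c(\m) = d$, as claimed.

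I expect the main obstacle to be the upgrade from "sharp $F$-injectivity of $(R,\m^t)$ with $t$ large" to "sharp $F$-purity of $(R,\m^{t'})$ with $t'$ large." In the Cohen-Macaulay case this should follow by combining the socle/Matlis-duality argument used for Lemma~\ref{lemma:basic pair}(3) with the perturbation trick of the Remark relating weak and sharp $F$-purity; but one must be careful that the relevant splitting is genuinely $R$-linear and not merely an injectivity statement on one cohomology module, which is exactly why Cohen-Macaulayness (so that $H^i_\m(R) = 0$ for $i \ne d$, and the only obstruction lives in top degree) is needed. An alternative, perhaps cleaner, route that avoids this issue entirely: argue directly that $c(\m) > d-1$ together with Cohen-Macaulayness and part~(5) forces $\m^{d+1-\lceil c(\m)\rceil} = \m^{d+1-d} = \m \subseteq J$ for a minimal reduction $J$ of $\m$ (after passing to a faithfully flat extension with infinite residue field, which does not change $c(\m)$ by an argument like Lemma~\ref{lemma:basic pair}(4)), so that $\m = J$ is generated by a system of parameters, i.e.\ $R$ is regular; then conclude $c(\m) = d$ as above. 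This second route sidesteps the $F$-purity upgrade and is likely the intended one, reducing the whole proof to part~(5) plus the standard fact that a $d$-dimensional Cohen-Macaulay local ring whose maximal ideal is generated by $d$ elements is regular.
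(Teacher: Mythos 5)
Your second, ``alternative'' route is exactly the paper's proof: pass to an infinite residue field (harmless by Lemma~\ref{lemma:basic pair}~(4)), note that $d-1<c(\m)\le d$ by part~(4) forces $\lceil c(\m)\rceil=d$, apply part~(5) to get $\m\subseteq J$ for a minimal reduction $J$, conclude $R$ is regular, and then compute $c(\m)=\mathrm{fpt}(\m)=d$ via part~(3) and the known value of $\mathrm{fpt}(\m)$ for regular rings. That route is complete and correct, and you rightly identify it as the intended one.

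Your first route, however, contains a genuine gap that is not merely a technical ``obstacle'': sharp $F$-injectivity of $(R,\m^t)$ does \emph{not} upgrade to sharp $F$-purity of $(R,\m^{t-\epsilon})$ for a general Cohen--Macaulay ring. Matlis duality converts injectivity of $c_eF^e$ on $H^d_\m(R)$ into surjectivity of the trace $F^e_*\omega_R\to\omega_R$ (Remark~\ref{remark:hara-takagi}), which is a splitting statement about $\omega_R$, not about $R$; the socle argument of Lemma~\ref{lemma:basic pair}~(3) closes this gap only when $\omega_R\cong R$, i.e.\ in the quasi-Gorenstein case. There exist Cohen--Macaulay $F$-injective rings that are not $F$-pure, so no amount of care with the perturbation trick will make that implication work. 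Since you flagged the difficulty and supplied the correct alternative, the proposal as a whole stands, but the first route should be discarded rather than repaired.
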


\begin{proof}
(1) (resp. (2), (3)) follows from Lemma~\ref{lemma:basic pair} (1) (resp. (2), (3)).

(4) Since the trace map commutes with localization, by Remark~\ref{remark:hara-takagi}, $c(\ba) \le c(\ba R_{\p})$ for every prime ideal $\p$ containing $\ba$. 
Localizing at a minimal prime of $\ba$, we may assume that $\mathrm{ht}\; \ba=d$. 
We can also assume by Lemma~\ref{lemma:basic pair} (4) that the residue field $R/\m$ is infinite. 
Let $J$ be a minimal reduction of $\ba$, and we will show that $0^{F\sharp \ba^t}_{H^d_{\m}(R)} \supseteq (0:J)_{H^d_{\m}(R)} \ne 0$ for every $t>d$. 
Let $z \in (0:J)_{H^d_{\m}(R)}$. 
Note that if $t>d$, then $J^{[q]} \supseteq \ba^{\lceil t(q-1) \rceil}$ for all large $q=p^e$ (because $\ba^{\lceil t(q-1) \rceil}=J^{\lceil t'_q(q-1) \rceil}\ba^n$ for some fixed $n\in\mathbb{N}$, where $t'_q=t-n/(q-1)$). 
Then 
\[
\ba^{\lceil t(q-1) \rceil} F^e_{H^d_{\m}(R)}(z) \subseteq J^{[q]}F^e_{H^d_{\m}(R)}(z)=F^e_{H^d_{\m}(R)}(Jz)=0
\]
for such $q=p^e$, which implies that $z \in 0^{F\sharp \ba^t}_{H^d_{\m}(R)}$. 

(5) It follows from Remark~\ref{remark:hara-takagi} that $\sum_{e \ge e_0}\mathrm{Tr}^e(F^e_*(\m^{\lceil (c(\m)-\epsilon)(p^e-1) \rceil}\omega_R))=\omega_R$ for all $e_0 \in \Z_{\ge 0}$ and for all $c(\m) \ge \epsilon >0$ (when $c(\m)=0$, we put $\epsilon=0$). 
Multiplying by $\m^{d+1-\lceil c(\m) \rceil}$ on both sides, one has 
\begin{align*}
\m^{d+1-\lceil c(\m) \rceil}\omega_R & =\m^{d+1-\lceil c(\m) \rceil} \sum_{e \ge e_0}\mathrm{Tr}^e\left(F^e_*(\m^{\lceil (c(\m)-\epsilon)(p^e-1) \rceil}\omega_R)\right)\\
& \subseteq \sum_{e \ge e_0}\mathrm{Tr}^e\left(F^e_*(\m^{\lceil (c(\m)-\epsilon)(p^e-1) \rceil+(d+1-\lceil c(\m) \rceil)p^e}\omega_R)\right)\\
& \subseteq \sum_{e \ge e_0}\mathrm{Tr}^e\left(F^e_*(\m^{d p^e}\omega_R)\right)\\
& \subseteq \sum_{e \ge e_0}\mathrm{Tr}^e\left(F^e_*(J^{[p^e]}\omega_R)\right)\\
& \subseteq J \omega_R
\end{align*}
for sufficiently large $e_0$ and for sufficiently small $\epsilon>0$. 
Since $R/J$ is the Matlis dual of $\omega_R/J \omega_R$, this means that $\m^{d+1-\lceil c(\m) \rceil} \subseteq J$. 

(6) Let $J$ be a minimal reduction of $\m$; we may assume by Lemma~\ref{lemma:basic pair} (4) that the residue field $R/\m$ is infinite. 
It then follows from (5) that $\m=J$, which means that $\m$ is generated by at most $d$ elements, 
that is, $R$ is regular. 
If $R$ is regular, then $c(\m)=\mathrm{fpt}(\m)=d$ by (3) and \cite[Theorem~2.7~(1)]{TW}. 
\end{proof}

\begin{eg}
Let $S$ be the $n$-dimensional polynomial ring $k[x_1, \dots, x_n]$ over an $F$-finite field $k$. 
Let $R=S^{(r)}$ be the $r$-th Veronese subring of $S$ and $\m_R$ be the homogeneous maximal ideal of $R$. 
Then $\mathrm{fpt}(\m_R)=n/r$ and $c(\m_R)=\lceil n/r \rceil$. 
\end{eg}

When $R$ is an $\N$-graded ring, the $i$-th $a$-invariant $a_i(R)$ is defined by
\[
a_i(R)=\max\{n \in \Z \mid [H^i_{\m}(R)]_n \ne 0\}
\]
for each $i$. 
The following proposition can be viewed as an extension of \cite[Theorem~4.3]{dSNB}. 

\begin{prop}
\label{prop:a_i}
Let $R$ be an $F$-injective $\N$-graded ring, with $R_0$ an $F$-finite field of characteristic $p>0$. 
Let $\m$ be the homogeneous maximal ideal of $R$. 
Then $c_i(\m) \le -a_i(R)$ for each $i$. In particular, if $R$ is $F$-pure, then by Lemma~\ref{lem:basic}~(3), one has the inequality $\mathrm{fpt}(\m) \le -a_i(R)$ for every integer $i$. 
\end{prop}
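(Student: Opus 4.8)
\emph{Plan.} The plan is to exploit the grading on $H^i_\m(R)$ together with the fact that the Frobenius action on local cohomology multiplies degrees by $p$. If $H^i_\m(R)=0$ there is nothing to prove, so assume $H^i_\m(R)\ne 0$ and put $a:=a_i(R)$, so that $[H^i_\m(R)]_a\ne 0$ while $[H^i_\m(R)]_n=0$ for all $n>a$. First I would fix a nonzero element $z\in[H^i_\m(R)]_a$.

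The only external input I would invoke is the standard fact that $F^e_{H^i_\m(R)}\colon H^i_\m(R)\to H^i_\m(R)$ is homogeneous with degree multiplier $p^e$, i.e.\ it carries $[H^i_\m(R)]_n$ into $[H^i_\m(R)]_{p^e n}$; this is visible by computing $H^i_\m(R)$ from the \v{C}ech complex on a homogeneous generating set of $\m$, on which $F^e$ raises the numerator and denominator of each \v{C}ech fraction to the $p^e$-th power. Since $R$ is $F$-injective --- passing to the local ring via $H^i_\m(R)\cong H^i_{\m R_\m}(R_\m)$ of Remark~\ref{rem:graded case} --- the map $F_{H^i_\m(R)}$, and hence its $e$-fold iterate $F^e_{H^i_\m(R)}$, is injective; so $F^e_{H^i_\m(R)}(z)$ is a nonzero homogeneous element of degree $p^e a$. (Taking $e=1$ already forces $a\le 0$, consistent with $c_i(\m)\ge 0$.)

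Next I would fix an arbitrary real number $t>-a$ and an arbitrary integer $e\ge 1$. Since $\m=R_{>0}$, every element of $\m^{\lceil t(p^e-1)\rceil}$ has degree at least $\lceil t(p^e-1)\rceil\ge t(p^e-1)$, so any nonzero element of $\m^{\lceil t(p^e-1)\rceil}F^e_{H^i_\m(R)}(z)$ would lie in degree at least
\[
p^e a+t(p^e-1)=a+(p^e-1)(a+t)>a,
\]
using $p^e-1\ge 1$ and $a+t>0$. As $H^i_\m(R)$ vanishes in all degrees exceeding $a$, this forces $\m^{\lceil t(p^e-1)\rceil}F^e_{H^i_\m(R)}(z)=0$ for every $e\ge 1$, so $0\ne z\in 0^{F\sharp\m^t}_{H^i_\m(R)}$ and therefore $0^{F\sharp\m^t}_{H^i_\m(R)}\ne 0$. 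Since $t>-a$ was arbitrary, this yields $c_i(\m)\le -a=-a_i(R)$. The ``in particular'' clause then follows at once: when $R$ is $F$-pure, Lemma~\ref{lem:basic}~(3) gives $\mathrm{fpt}(\m)\le c_i(\m)\le -a_i(R)$ for every $i$.

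I do not anticipate a real obstacle here; the only step that is not pure bookkeeping with degrees is the homogeneity (with multiplier $p^e$) of the Frobenius action on $H^i_\m(R)$, together with the routine compatibility of this action with the isomorphism $H^i_\m(R)\cong H^i_{\m R_\m}(R_\m)$ used to invoke $F$-injectivity.
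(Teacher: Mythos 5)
Your proposal is correct and follows essentially the same argument as the paper: take a nonzero $z$ in the top degree $a_i(R)$ of $H^i_\m(R)$, use that Frobenius multiplies degrees by $p^e$, and observe that for $t>-a_i(R)$ the product $\m^{\lceil t(p^e-1)\rceil}F^e_{H^i_\m(R)}(z)$ lands in degrees where $H^i_\m(R)$ vanishes, so $z\in 0^{F\sharp\m^t}_{H^i_\m(R)}$. Your bookkeeping (bounding degrees by $a_i(R)$ rather than by $0$, and getting the vanishing for every $e\ge 1$) is a harmless cosmetic variation of the paper's proof.
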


\begin{proof}
We may assume that $H^i_{\m}(R) \ne 0$. 
We will then show that $c_i(\m) \le -a_i(R)+\epsilon$, that is, $0^{F\sharp \m^{-a_i(R)+\epsilon}}_{H^i_{\m}(R)} \ne 0$, for every $\epsilon>0$. 
Note that $a_i(R) \le 0$, because $R$ is $F$-injective. 
Let $z \in [H^i_{\m}(R)]_{a_i(R)}$ be a nonzero element. 
Since $\lceil (-a_i(R)+\epsilon)(q-1) \rceil +a_i(R)q >0$ for all sufficiently large $q=p^e$, 
one has 
\[
\m^{\lceil (-a_i(R)+\epsilon)(q-1) \rceil}F^e_{H^i_{\m}(R)}(z) \subseteq [H^i_{\m}(R)]_{>0}=0
\]
for such $q$, 
which means that $z \in 0^{F\sharp \m^{-a_i(R)+\epsilon}}_{H^i_{\m}(R)}$. 
\end{proof}

We record two cases in which the above result can be strengthened to an equality:

\begin{prop}
\label{prop:fit=-a}
Let $R$ be an $F$-injective standard graded ring with $R_0$ an $F$-finite field of characteristic $p>0$. 
Let $\m$ denote the homogeneous maximal ideal of $R$. 
Suppose that one of the following conditions is satisfied: 
\begin{enumerate}[\quad\rm(1)]
\item $R$ is Cohen-Macaulay, 
\item $R$ is normal and quasi-Gorenstein. 
\end{enumerate}
Then $c(\m)=-a(R)$. 
\end{prop}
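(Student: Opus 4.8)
The plan is to prove $c(\m) = -a(R)$ in both cases by establishing the two inequalities separately. The inequality $c(\m) \le -a(R)$ is already available from Proposition~\ref{prop:a_i} applied with $i = d = \dim R$ (note $a(R) = a_d(R)$), so the work is entirely in proving $c(\m) \ge -a(R)$, i.e., showing that $0^{F\sharp \m^{t}}_{H^d_{\m}(R)} = 0$ for every $t < -a(R)$. Equivalently, by Remark~\ref{remark:hara-takagi}, I would translate this into a statement about the trace map: it suffices to show that for every $t < -a(R)$ and every $e_0 \ge 0$,
\[
\sum_{e \ge e_0} \mathrm{Tr}^e\bigl(F^e_*(\m^{\lceil t(p^e-1)\rceil}\omega_R)\bigr) = \omega_R.
\]
Here I would use that $\omega_R$ is a graded module and that $\mathrm{Tr}^e$ is a graded map of degree $0$ with respect to the natural grading on $F^e_*\omega_R$ (where $F^e_*\omega_R$ has $i$-th graded piece $[\omega_R]_{p^e i + (\text{shift})}$), so everything can be checked degree by degree, and the point is to hit the top-degree (minimal-degree generator) part of $\omega_R$.

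For case (1), where $R$ is Cohen-Macaulay, I would work with the graded canonical module $\omega_R$, whose generators sit in degree $-a(R) = a_d(R)$ appropriately normalized; in fact for a standard graded Cohen-Macaulay ring the socle of $H^d_\m(R)$ lives in degree $a(R)$, dually the generators of $\omega_R$ are in the lowest degree which is $-a(R)$. The key computation is a degree count: for a homogeneous generator $u \in [\omega_R]$ of minimal degree and for $t < -a(R)$, an element $F^e_* (m u)$ with $m \in \m^{\lceil t(p^e-1)\rceil}$ of suitable degree maps under $\mathrm{Tr}^e$ into a graded piece of $\omega_R$; by the $F$-injectivity hypothesis (which by Matlis duality says $\mathrm{Tr}^e$ is surjective) and a careful bookkeeping of degrees — the exponent $\lceil t(p^e-1)\rceil$ grows strictly slower than $p^e \cdot (-a(R))$ — one shows the image of the trace in degree $-a(R)$ is all of $[\omega_R]_{-a(R)}$, hence by Nakayama the sum is all of $\omega_R$. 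The standard-graded hypothesis is essential here so that $\m^k$ contains elements of all degrees $\ge k$.

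For case (2), where $R$ is normal and quasi-Gorenstein (but possibly not Cohen-Macaulay), $\omega_R$ is no longer the canonical module in the Cohen-Macaulay sense, but quasi-Gorenstein means $\omega_R \cong R(a)$ for $a = a(R)$ as graded modules, i.e., $\omega_R$ is free of rank one generated in degree $-a(R)$. Then $\mathrm{Tr}^e \colon F^e_*\omega_R \to \omega_R$ becomes, after this identification, the trace map $F^e_*R(a) \to R(a)$, and the condition reduces to showing $\sum_{e \ge e_0}\mathrm{Tr}^e(F^e_*(\m^{\lceil t(p^e-1)\rceil} R)) = R$ — which, by Lemma~\ref{lemma:basic pair}~(3) combined with Remark~\ref{remark:hara-takagi}, is exactly the statement that $(R, \m^t)$ is sharply $F$-pure for all $t < -a(R)$, equivalently $\mathrm{fpt}(\m) \ge -a(R)$, and together with Lemma~\ref{lem:basic}~(3) ($c(\m) = \mathrm{fpt}(\m)$ in the quasi-Gorenstein case) this closes the loop. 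So for case (2) I expect the argument to be: reduce via quasi-Gorensteinness to the $F$-purity/$F$-pure threshold language, then run the same degree-counting argument on $R$ itself rather than $\omega_R$, using $F$-injectivity of $R$ to produce splittings in the top degree.

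The main obstacle I anticipate is the degree bookkeeping in case (1): getting the ceiling functions $\lceil t(p^e-1)\rceil$, the Frobenius twists on the grading of $F^e_*\omega_R$, and the $a$-invariant shift to line up so that (a) one stays below the threshold degree for $t < -a(R)$, and (b) the surjectivity coming from $F$-injectivity actually lands in the degree one needs. A secondary subtlety is confirming that $F$-injectivity alone (not $F$-purity) suffices to guarantee that the relevant iterated trace maps are surjective in the needed range — this uses that $F$-injective plus Cohen-Macaulay gives that $\mathrm{Tr}^e$ is surjective, which should follow from Matlis duality applied to the injectivity of Frobenius on $H^d_\m(R)$, but one must be careful that a single $e$ may not suffice and the sum over $e \ge e_0$ is genuinely needed. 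I do not expect case (2) to present real difficulties once the reduction to the $F$-pure-threshold formulation is made.
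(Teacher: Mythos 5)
Your route is genuinely different from the paper's. For $c(\m)\le -a(R)$ you correctly invoke Proposition~\ref{prop:a_i} with $i=d$. For $c(\m)\ge -a(R)$ in case (1), the paper does not work with the trace map at all: it takes a minimal reduction $J$ of $\m$ generated by linear forms, notes $\m^{d+a(R)+1}\subseteq J$, and proves $J^{F\sharp \m^{-a(R)-\epsilon}}=J$ via the colon estimate $x^q\in (J^{[q]}:J^{\lceil (-a(R)-\epsilon)(q-1)\rceil})\subseteq J^{[q]}+J^{dq-\lceil (-a(R)-\epsilon)(q-1)\rceil-d+1}$ together with a degree count, concluding by $F$-injectivity (triviality of Frobenius closure of parameter ideals). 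Your argument is the Matlis-dual one: surjectivity of $\mathrm{Tr}^e$ (dual to injectivity of Frobenius on $H^d_{\m}(R)$) plus a degree count inside $\omega_R$, filtered through Remark~\ref{remark:hara-takagi}. Both are legitimate, and yours has the advantage of handling case (2) by the same mechanism, whereas the paper's case (2) is a separate projective argument: a global $F$-splitting of $\Proj R$ along a section of degree $(1-p)a(R)$ in the style of \cite{SS}, which in addition silently uses that a quasi-Gorenstein $F$-injective ring is $F$-pure.

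One step of your case (1), as written, is wrong: for a standard graded Cohen--Macaulay ring the socle of $H^d_{\m}(R)$ need not be concentrated in degree $a(R)$, i.e., $\omega_R$ need not be generated in the single degree $-a(R)$ (that is the extra condition that $R$ be level). So hitting only $[\omega_R]_{-a(R)}$ and invoking Nakayama does not suffice. The fix stays inside your framework: every minimal generator degree $d_r$ of $\omega_R$ satisfies $d_r\ge -a(R)>t$, so if $d_s$ denotes the largest generator degree, then for $q=p^e\gg 0$ one has $[\omega_R]_{d_rq}\subseteq \m^{d_rq-d_s}\omega_R\subseteq \m^{\lceil t(q-1)\rceil}\omega_R$, and gradedness plus surjectivity of $\mathrm{Tr}^e$ give $[\omega_R]_{d_r}=\mathrm{Tr}^e\bigl(F^e_*[\omega_R]_{d_rq}\bigr)\subseteq \mathrm{Tr}^e\bigl(F^e_*(\m^{\lceil t(q-1)\rceil}\omega_R)\bigr)$ for every $r$; these graded pieces generate $\omega_R$, which is what Remark~\ref{remark:hara-takagi} requires. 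Relatedly, in case (2) you should avoid the detour through splittings and $\mathrm{fpt}$: Lemma~\ref{lem:basic}~(3) presupposes $F$-purity, which is not among your hypotheses and would need the quasi-Gorenstein $F$-injective $\Rightarrow$ $F$-pure step to be justified. The cleaner move, consistent with your case (1), is to use $\omega_R\cong R(a(R))$, generated in the single degree $-a(R)$, and run the identical trace-and-degree argument; this yields $0^{F\sharp \m^t}_{H^d_{\m}(R)}=0$ for all $t\le -a(R)$ using only surjectivity of $\mathrm{Tr}^e$, i.e., only $F$-injectivity.
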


\begin{proof}
Let $d$ be the dimension of $R$. 
First, we assume that the condition (1) holds and we will prove that $c(\m) \ge -a(R)-\epsilon$ for every $\epsilon>0$. 
We may assume by Lemma~\ref{lemma:basic pair} (4) that $R_0$ is an infinite field. 
Let $J$ be a minimal reduction of $\m$. 
As $R$ is a Cohen-Macaulay standard graded ring, $J$ is generated by a homogeneous regular sequence of degree one. 
Then $\m^{d+a(R)+1}$ is contained in $J$ but $\m^{d+a(R)}$ is not. 

It is enough to show that $J^{F\sharp \m^{-a(R)-\epsilon}}=J$ by Lemma~\ref{lemma:basic pair} (1) and Lemma~\ref{lem:basic} (1). 
Let $x \in J^{F\sharp \m^{-a(R)-\epsilon}}$, and we may assume that the degree of $x$ is less than or equal to $d+a(R)$. 
By definition, $\m^{\lceil (-a(R)-\epsilon)(q-1) \rceil}x^q \subseteq J^{[q]}$ for all sufficiently large $q=p^e$. 
Thus, 
\[
x^q \in (J^{[q]}:J^{\lceil (-a(R)-\epsilon)(q-1) \rceil}) \subseteq J^{[q]}+J^{dq-\lceil (-a(R)-\epsilon)(q-1) \rceil -d+1}.
\]
The degree of $x^q$ is less than or equal to $(d+a(R))q$, but $dq-\lceil (-a(R)-\epsilon)(q-1) \rceil -d+1$ is greater than $(d+a(R))q$ for sufficiently large $q$, so $x^q$ has to lie in $J^{[q]}$ for such $q$. 
It then follows from the $F$-injectivity of $R$ that $x \in J$, that is, $J^{F\sharp \m^{-a(R)-\epsilon}}=J$. 

Next, we assume that the condition (2) holds and we will show that $c(\m) \ge -a(R)$. 
It is enough to show by Lemma~\ref{lem:basic} (3) that $\mathrm{fpt}(\m) \ge -a(R)$. 
Let $X=\Proj R$. 
Since $R$ is a quasi-Gorenstein normal standard graded ring, there exists a very ample divisor $H$ on $X$ such that $R=\bigoplus_{n \ge 0}H^0(X, \sO_X(nH))$ and $K_X \sim a(R)H$. 
Note that $X$ is globally $F$-split and $a(R) \le 0$, because $R$ is $F$-pure. 
It then follows from an argument similar to the proof of \cite[Theorem~4.3]{SS} that there exists an effective Cartier divisor $D$ on $X$ such that $D \sim (1-p)a(R)H$ and that the composite map 
\[
\sO_X \to F_*\sO_X \to F_*\sO_X(D) \quad x \mapsto F_*x^p \mapsto F_*(sx^p)
\]
splits as an $\sO_X$-module homomorphism, where $s$ is a defining section for $D$. 
This map induces the $R$-linear map $R \to F_*R$ with $1\mapsto F_*s$, which also splits. 
Since $s$ belongs to $\m^{(1-p)a(R)}$ by the definition of $D$, the pair $(R, \m^{-a(R)})$ is sharply $F$-pure. Thus, $\mathrm{fpt}(\m) \ge -a(R)$. 
\end{proof}

Motivated by Conjecture~\ref{conj:HWY}, we propose the following conjecture. 

\begin{conj}
\label{conj:fpt=fit}
Let $(R, \m)$ be an $F$-finite $F$-pure normal local ring of characteristic $p>0$. Then $R$ is quasi-Gorenstein if and only if $\mathrm{fpt}(\m)=c(\m)$. 
\end{conj}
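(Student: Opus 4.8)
The \emph{only if} implication is immediate from Lemma~\ref{lem:basic}~(3): if $R$ is quasi-Gorenstein, then $c(\m)=\mathrm{fpt}(\m)$. So the content of the conjecture is the converse; and, since Lemma~\ref{lem:basic}~(3) also records the unconditional inequality $\mathrm{fpt}(\m)\le c(\m)$, what has to be shown is that this inequality is \emph{strict} whenever $R$ fails to be quasi-Gorenstein. The plan is to establish this under the additional hypothesis that the anti-canonical cover $\bigoplus_{n\ge 0}\sO_X(-nK_X)$ of $X=\Spec R$ is a Noetherian ring --- this being the local counterpart of Theorem~A --- and I expect the unrestricted statement to lie beyond the present method.

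The first step is to recast both invariants through the iterated trace map $\mathrm{Tr}^e\colon F^e_*\omega_R\to\omega_R$. By Remark~\ref{remark:hara-takagi} together with Lemma~\ref{lem:basic}~(1), if $\mu_e$ denotes the largest integer $k$ with $\mathrm{Tr}^e(F^e_*(\m^{k}\omega_R))=\omega_R$, then $c(\m)=\lim_{e\to\infty}\mu_e/p^e$. On the other hand, dualizing the splitting criterion for sharp $F$-purity of $(R,\m^{t})$ --- using that $\omega_R$ is a dualizing module and that $\mathrm{Hom}_R(F^e_*R,R)\cong F^e_*\sO_X((1-p^e)K_X)$ --- the pair $(R,\m^{t})$ is sharply $F$-pure exactly when, for some $e$, there is a \emph{single} element $c\in\m^{k}$ with $\mathrm{Tr}^e(F^e_*(c\,\omega_R))=\omega_R$; writing $\nu_e$ for the largest $k$ for which such a $c$ exists, one has $\mathrm{fpt}(\m)=\lim_{e\to\infty}\nu_e/p^e$. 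Because $\m^{k}\omega_R=\sum_{c\in\m^{k}}c\,\omega_R$ and $\mathrm{Tr}^e$ is additive, $\nu_e\le\mu_e$ --- this recovers $\mathrm{fpt}(\m)\le c(\m)$ --- and the two invariants coincide iff $\mu_e-\nu_e=o(p^e)$. If $\omega_R$ is principal near $\m$, say $\omega_R=R\eta$, then in fact $\nu_e=\mu_e$ for every $e$: given $b\in\m^{k}$ with $\mathrm{Tr}^e(F^e_*(b\eta))$ a unit multiple of $\eta$ --- such a $b$ exists whenever $k\le\mu_e$, since otherwise all of these lie in $\m\eta$ and cannot span $\omega_R$ --- one already has $\mathrm{Tr}^e(F^e_*(b\,\omega_R))=\omega_R$. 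This re-proves \emph{only if}, and the task becomes: show that when $\omega_R$ is not principal, $\mu_e-\nu_e$ instead grows linearly in $p^e$.

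To control that growth I would use the Noetherian hypothesis in the way the two proofs of Theorem~A do. Finite generation of $\bigoplus_{n\ge 0}\sO_X(-nK_X)$ forces the minimal $\m$-adic order of a generator of the reflexive power $\sO_X(-nK_X)$ (in a fixed fractional-ideal normalization) to be eventually quasi-linear in $n$, with a well-defined rational slope; and, via $\mathrm{Hom}_R(F^e_*R,R)\cong F^e_*\sO_X(-(p^e-1)K_X)$, this slope dictates precisely how deep in the $\m$-adic filtration the element $c$ above can be chosen, so that $\mathrm{fpt}(\m)$ is read off from it. (In the graded case this is the transparent statement that $-\nu_e(\m)$ is a minimal generator degree of the $(1-p^e)$-th symbolic power of $\omega_R$, while $c(\m)=-a(R)$, cf.\ Propositions~\ref{prop:a_i} and~\ref{prop:fit=-a}.) Generalizing the argument of \cite[Theorem~2.7]{TW} --- where the analogous comparison of slopes forces regularity --- I would then argue that $\mathrm{fpt}(\m)=c(\m)$ forces this slope to agree with the order data of $\omega_R$ itself, and, using finite generation once more so that only finitely many $n$ need be inspected, that such agreement is possible only when $\sO_X(-K_X)$ is invertible at $\m$, i.e.\ $\omega_R$ is free, i.e.\ $R$ is quasi-Gorenstein.

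The step I expect to be the real obstacle is dispensing with the Noetherian hypothesis. Without it, the anti-canonical symbolic Rees algebra may fail to be finitely generated, the order functions on $\sO_X(-nK_X)$ need not be asymptotically linear, and the ``slope'' whose vanishing is supposed to detect quasi-Gorensteinness may simply not exist --- which is exactly why Theorem~A, its local analogue, and Theorem~B all carry this hypothesis, and why the bare conjecture stays open. A secondary, more technical obstacle, present even under the Noetherian hypothesis, is the closing implication ``the slopes agree $\Rightarrow\omega_R$ is free'': one must rule out the $\Q$-Gorenstein, non-Gorenstein case, and more generally any situation in which the reflexive powers of $\sO_X(-K_X)$ happen to be generated ``as cheaply as possible'' near $\m$ without $-K_X$ being honestly Cartier. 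This last point should come down to combining the full strength of the finite generation with a Nakayama-type count of the minimal generators of $\omega_R$.
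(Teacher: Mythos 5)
Since this statement is a conjecture, the paper itself only establishes it under extra hypotheses (Theorem~\ref{thm:anti-canonical} for Noetherian anti-canonical cover, Corollary~\ref{cor:Delta} when a suitable boundary $\Delta$ exists), and you correctly restrict your attempt to the same scope. Your outline of the ``only if'' direction and of the overall shape of the argument (trace maps, the identification $\Hom_R(F^e_*R,R)\cong F^e_*\sO_X((1-p^e)K_X)$, finite generation of the anti-canonical algebra) is in the right spirit, but there is a genuine gap at the very first step. Your dual characterization of sharp $F$-purity --- that $(R,\m^t)$ is sharply $F$-pure iff some single $c\in\m^{\lceil t(q-1)\rceil}$ satisfies $\mathrm{Tr}^e(F^e_*(c\,\omega_R))=\omega_R$ --- is exactly the equivalence of Lemma~\ref{lemma:basic pair}~(3), which holds in one direction in general but in both directions \emph{only when $R$ is quasi-Gorenstein}. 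In general the splitting of $1\mapsto F^e_*c$ is governed by the section module $\sO_X((1-q)K_X)$, not by $c\cdot\omega_R$: its Matlis dual is the injectivity of a map $H^d_\m(\omega_R)\to H^d_\m(\sO_X(qK_X+\cdots))$ between \emph{different} modules. By substituting $\omega_R$ for $\sO_X((1-q)K_X)$ you make your $\nu_e$ too large, and you assume precisely the identification whose failure the argument is supposed to detect. (A smaller issue of the same kind: Remark~\ref{remark:hara-takagi} characterizes $0^{F\sharp\m^t}_{H^d_\m(R)}=0$ by a sum over all $e\ge e_0$, so $c(\m)$ is not obviously computed by a single-$e$ quantity $\mu_e$.)

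The second gap is the one you yourself flag as an obstacle, and the paper's resolution of it is worth contrasting with your ``slope comparison'' plan. Theorem~\ref{thm:anti-canonical} does not compare asymptotic slopes at all; it proves the uniform gap $\mathrm{fpt}(\m)+\tfrac1r\le c(\m)$ when $R$ is not quasi-Gorenstein, where $r$ comes from the Noetherian hypothesis via $\sO_X((1-q)K_X)=\sO_X(-rK_X)^{n_e}\sO_X(-j_eK_X)$. The mechanism is: write the splitting $\phi$ as a section of $F^e_*\sO_X((1-q)K_X)$, expand it in monomials $\phi_1^{m_1}\cdots\phi_l^{m_l}$ in generators of $\sO_X(-rK_X)$, extract a single monomial summand that still splits, factor $\phi$ through $F^e_*\sO_X(m_1D_1+\cdots+m_lD_l)$ with $D_i\sim -rK_X$ effective, dualize, and then use the containment $\omega_X^r\subseteq\m\,\sO_X(rK_X+D_i)$ --- valid exactly because $R$ is not quasi-Gorenstein --- to push $\omega_X^q$ into $\m^{\lfloor(q-1)/r\rfloor}\sO_X(qK_X+\sum m_iD_i)$ and conclude as in Theorem~\ref{thm:non-Gorenstein}. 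That containment is what disposes of the $\Q$-Gorenstein non-Gorenstein case you worry about at the end, and it is the ingredient your proposal is missing; without it (and without the correct dualization above) the argument does not close.
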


\begin{rem}
Conjecture~\ref{conj:fpt=fit} can fail if $R$ is not normal. 
Indeed, \cite[Example~5.3]{dSNB} and Proposition~\ref{prop:fit=-a} give a counterexample. 
\end{rem}

Conjecture~\ref{conj:fpt=fit} implies an extension of Conjecture~\ref{conj:HWY}.

\begin{prop}
\label{prop:imply}
Let $R$ be an $F$-pure normal standard graded ring, with $R_0$ an $F$-finite field of characteristic $p>0$. 
Let $\m$ denote the homogeneous maximal ideal of $R$. 
Suppose that Conjecture~\ref{conj:fpt=fit} holds for the localization $R_{\m}$ of $R$ at $\m$. 
Then $R$ is quasi-Gorenstein if and only if $\mathrm{fpt}(\m)=-a(R)$. 
\end{prop}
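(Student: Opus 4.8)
The plan is to reduce the whole statement to the local ring $R_{\m}$, where Conjecture~\ref{conj:fpt=fit} is assumed to hold, by combining the two chains of inequalities already established in this section with the compatibility of $\mathrm{fpt}(\m)$, $c(\m)$, and quasi-Gorensteinness with localization at $\m$.

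First I would set up the graded-to-local dictionary. For every maximal ideal $\m'$ of $R$ other than the homogeneous one we have $\m R_{\m'}=R_{\m'}$, and $R_{\m'}$ is $F$-pure since $R$ is; hence weak $F$-purity of $(R,\m^t)$ is equivalent to weak $F$-purity of $(R_{\m},(\m R_{\m})^t)$, so $\mathrm{fpt}(\m)=\mathrm{fpt}(\m R_{\m})$. Likewise $c(\m)=c(\m R_{\m})$ by Remark~\ref{rem:graded case}. I would also record that $R$ is quasi-Gorenstein if and only if $R_{\m}$ is: being normal, connected, and standard graded, $R$ is a domain, and $\omega_R$ is a finitely generated graded module with $\dim_{R_0}\omega_R/\m\omega_R=\dim_{R_{\m}/\m R_{\m}}\omega_{R_{\m}}/\m\omega_{R_{\m}}$, so freeness of rank one of $\omega_{R_{\m}}$ forces $\omega_R$ to be cyclic, hence (as $R$ is a domain) isomorphic to a shift of $R$; the converse is immediate.

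With this in hand the two implications are short. For the forward direction, assume $R$ is quasi-Gorenstein. Then $R$ is $F$-injective (being $F$-pure), so Proposition~\ref{prop:fit=-a}(2) gives $c(\m)=-a(R)$; and since $R_{\m}$ is $F$-pure and quasi-Gorenstein, Lemma~\ref{lem:basic}(3) gives $c(\m R_{\m})=\mathrm{fpt}(\m R_{\m})$, that is, $c(\m)=\mathrm{fpt}(\m)$, whence $\mathrm{fpt}(\m)=-a(R)$. For the converse, assume $\mathrm{fpt}(\m)=-a(R)$. Lemma~\ref{lem:basic}(3) applied to $R_{\m}$ gives $\mathrm{fpt}(\m)\le c(\m)$, while Proposition~\ref{prop:a_i} gives $c(\m)=c_d(\m)\le -a_d(R)=-a(R)$; hence $c(\m)=\mathrm{fpt}(\m)$, so $c(\m R_{\m})=\mathrm{fpt}(\m R_{\m})$. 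Now Conjecture~\ref{conj:fpt=fit}, assumed to hold for $R_{\m}$, shows $R_{\m}$ is quasi-Gorenstein, and therefore so is $R$ by the dictionary above.

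Since the conjecture for $R_{\m}$ is granted and the inequalities are already proved, this is essentially a bookkeeping reduction; the only points requiring genuine care are the local-to-graded transfers at the start, namely that $\mathrm{fpt}(\m)$ and $c(\m)$ are unaffected by localizing at $\m$, and — slightly more substantively — that quasi-Gorensteinness descends from $R_{\m}$ to $R$, which uses that $R$ is a graded domain so that a cyclic reflexive module of rank one is forced to be free. Everything else is a direct concatenation of Propositions~\ref{prop:a_i} and~\ref{prop:fit=-a} with Lemma~\ref{lem:basic}(3).
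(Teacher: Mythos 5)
Your proof is correct and follows essentially the same route as the paper: one sandwiches $-a(R)=\mathrm{fpt}(\m)\le \mathrm{fpt}(\m R_{\m})\le c(\m R_{\m})=c(\m)\le -a(R)$ and invokes Conjecture~\ref{conj:fpt=fit} for $R_{\m}$, while the converse comes from Lemma~\ref{lem:basic}~(3) and Proposition~\ref{prop:fit=-a}~(2). Your additional care---checking $\mathrm{fpt}(\m)=\mathrm{fpt}(\m R_{\m})$, the descent of quasi-Gorensteinness from $R_{\m}$ to $R$, and using Proposition~\ref{prop:a_i} for the bound $c(\m)\le -a(R)$ (whose hypotheses, unlike those of Proposition~\ref{prop:fit=-a}, are available in the ``if'' direction)---only makes explicit steps the paper leaves implicit.
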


\begin{proof}
The ``only if" part immediately follows from Lemma~\ref{lem:basic} (3) and Proposition~\ref{prop:fit=-a}. We will show the ``if" part. 
Suppose that $\mathrm{fpt}(\m)=-a(R)$. Then by Remark~\ref{rem:graded case}, Lemma~\ref{lem:basic}~(3) and Proposition~\ref{prop:fit=-a}, 
\[
-a(R)=\mathrm{fpt}(\m) \le \mathrm{fpt}(\m R_{\m}) \le c(\m R_{\m})=c(\m)=-a(R),
\]
which implies that $\mathrm{fpt}(\m R_{\m}) = c(\m R_{\m})$. 
It then follows from Conjecture~\ref{conj:fpt=fit} that $R_{\m}$ is quasi-Gorenstein, which is equivalent to saying that $R$ is quasi-Gorenstein. 
\end{proof}

\begin{thm}
\label{thm:non-Gorenstein}
Let $(R, \m)$ be an $F$-finite normal local ring of characteristic $p>0$ and $\Delta$ be an effective $\Q$-divisor on $X:=\Spec R$ such that $(X, \Delta)$ is $F$-pure and $K_X+\Delta$ is~$\Q$-Cartier of index $r$. 
If $R$ is not quasi-Gorenstein, then 
\[
\mathrm{fpt}(\Delta; \m)+\frac{1}{r} \le c(\m).
\]
\end{thm}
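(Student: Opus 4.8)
The plan is to bound $c(\m)$ from above by constructing a nonzero element of the sharp Frobenius closure $0^{F\sharp\m^t}_{H^d_\m(R)}$ for all $t$ slightly below $\mathrm{fpt}(\Delta;\m)+\tfrac1r$. Write $d=\dim R$, and let $\omega_X=\sO_X(K_X)$. By Remark~\ref{remark:hara-takagi}, it suffices to exhibit, for $t<\mathrm{fpt}(\Delta;\m)+\tfrac1r$, an integer $e_0$ with
\[
\sum_{e\ge e_0}\mathrm{Tr}^e\!\left(F^e_*\!\left(\m^{\lceil t(p^e-1)\rceil}\omega_R\right)\right)\subsetneq\omega_R .
\]
Equivalently, after dualizing via Matlis duality / Lemma~\ref{lem:sharp F-purity}, it is enough to find a nonzero socle-type element of $H^d_\m(\omega_X)$ killed (after multiplication by suitable $c_e\in\m^{\lfloor t(p^e-1)\rfloor}$) by $F^e_{X,\Delta}$ for all large $e$ in some infinite set. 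The key structural input is that $K_X+\Delta$ is $\Q$-Cartier of index exactly $r$: this means $\sO_X(r(K_X+\Delta))\cong\sO_X$ but $\sO_X(i(K_X+\Delta))$ is not free for $0<i<r$, so the anti-canonical-type graded object $\bigoplus_{i\ge0}\sO_X(-i(K_X+\Delta))$ is finitely generated and, in a suitable sense, "not concentrated in a single degree," which is exactly the obstruction to $R$ being quasi-Gorenstein.

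First I would set up the splitting coming from $F$-purity of $(X,\Delta)$: since $(X,\Delta)$ is $F$-pure, for infinitely many $e$ there is $\varphi_e\in\Hom_R(F^e_*\sO_X(\lceil(p^e-1)\Delta\rceil),R)$ which is surjective, and this is dual to a suitable Frobenius action on local cohomology. Next, I would combine this with the $\Q$-Cartier hypothesis: because $r(K_X+\Delta)\sim0$, one can choose $e$ in the residue class making $p^e\equiv1\pmod r$, so that $p^eK_X+(p^e-1)\Delta$ is linearly equivalent to $K_X$ plus a Cartier divisor; this is the standard trick (as in \cite[Theorem~2.7]{TW} and \cite[Theorem~4.1]{Sch}) that lets one compare the $e$-th Frobenius action on $H^d_\m(\omega_X)$ with an honest Frobenius action twisted by an invertible sheaf. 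The heart of the argument is then a degree/multiplicity count: starting from an element whose vanishing controls $\mathrm{fpt}(\Delta;\m)$ — i.e., a socle generator forced nonzero because $(R,\m^{\mathrm{fpt}(\Delta;\m)})$ is sharply $F$-pure but one cannot push past it — I would multiply by an element of $\m$-adic order matched to the index-$r$ shift, gaining an extra $\tfrac1r$ in the threshold exponent. This is precisely the mechanism by which a "gap" of size $\tfrac1r$ appears, and it mirrors the computation in Lemma~\ref{lem:basic}~(4)–(5) where multiplying by a power of $\m$ converts a trace-surjectivity statement into a proper-containment statement.

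Concretely, the chain I expect to establish is: if $R$ were to have $c(\m)>\mathrm{fpt}(\Delta;\m)+\tfrac1r-\epsilon$ for all $\epsilon>0$, then by Remark~\ref{remark:hara-takagi} the trace sums $\sum_{e\ge e_0}\mathrm{Tr}^e(F^e_*(\m^{\lceil(c(\m)-\epsilon)(p^e-1)\rceil}\omega_R))$ would all equal $\omega_R$; feeding the $F$-split sections $\varphi_e$ of $(X,\Delta)$ into this and using $p^e\equiv1\pmod r$ to untwist, one would deduce that the pair $(R,\Delta)$ together with $\m$ raised to a power strictly larger than $\mathrm{fpt}(\Delta;\m)$ is still sharply $F$-pure — contradicting the definition of $\mathrm{fpt}(\Delta;\m)$ — unless the index $r$ obstruction vanishes, i.e., unless $r=1$ and $R$ is quasi-Gorenstein. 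The main obstacle, I expect, is the bookkeeping in the untwisting step: keeping careful track of round-ups $\lceil\cdot\rceil$ versus $\lfloor\cdot\rfloor$ in the exponents of $\m$, of the non-Cartier sheaves $\sO_X(i(K_X+\Delta))$ for $0<i<r$ (one must ensure the relevant section lands in exactly $\m^{(p^e-1)/r}$ and no higher power, which is where "index exactly $r$" is used), and of passing between the graded module $\bigoplus\sO_X(\lfloor p^eK_X+(p^e-1)\Delta\rfloor)$ and $\omega_X$ via reflexive-hull isomorphisms in codimension one. Once that comparison is pinned down, the numerical inequality $\mathrm{fpt}(\Delta;\m)+\tfrac1r\le c(\m)$ follows by letting $\epsilon\to0$.
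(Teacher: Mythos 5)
Your proposal is aimed at the wrong inequality, and this inverts the entire logical structure of the argument. You say you will ``bound $c(\m)$ from above by constructing a nonzero element of $0^{F\sharp\m^t}_{H^d_{\m}(R)}$ for $t$ slightly below $\mathrm{fpt}(\Delta;\m)+\tfrac1r$,'' i.e.\ you will show $\sum_{e\ge e_0}\mathrm{Tr}^e(F^e_*(\m^{\lceil t(p^e-1)\rceil}\omega_R))\subsetneq\omega_R$. By the definition of $c(\m)$ (Definition~\ref{def:thresholds}), exhibiting a nonzero element of $0^{F\sharp\m^t}_{H^d_{\m}(R)}$ proves $c(\m)\le t<\mathrm{fpt}(\Delta;\m)+\tfrac1r$, which is the \emph{negation} of the theorem; the theorem asserts the lower bound $c(\m)\ge\mathrm{fpt}(\Delta;\m)+\tfrac1r$. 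Your concluding ``concrete chain'' confirms the inversion: you assume $c(\m)>\mathrm{fpt}(\Delta;\m)+\tfrac1r-\epsilon$ for all $\epsilon>0$ (that is, you assume the conclusion) and try to derive a contradiction with the definition of $\mathrm{fpt}(\Delta;\m)$. Were that argument to succeed, it would disprove the theorem. A secondary problem in that chain is the step passing from surjectivity of the trace sums (an $F$-injectivity--type statement controlling $c(\m)$) to sharp $F$-purity of a pair; no such implication is available, and indeed the possibility $c(\m)>\mathrm{fpt}(\m)$ is exactly the phenomenon the paper is quantifying.

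The correct direction is the opposite one: fix $t=\mathrm{fpt}(\Delta;\m)+\tfrac1r-\epsilon-\epsilon'$ and show that \emph{every} $\xi\in 0^{F\sharp\m^{t}}_{H^d_{\m}(R)}$ vanishes. Weak $F$-purity at level $\mathrm{fpt}(\Delta;\m)-\epsilon$ together with Lemma~\ref{lem:sharp F-purity} provides $q_0=p^{e_0}$ and $c\in\m^{\lfloor(\mathrm{fpt}(\Delta;\m)-\epsilon)(q_0-1)\rfloor}$ with $cF^{e_0}_{X,\Delta}\colon H^d_{\m}(\omega_X)\to H^d_{\m}(\sO_X(\lfloor q_0K_X+(q_0-1)\Delta\rfloor))$ injective. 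The non--quasi-Gorenstein hypothesis enters through the containment $\omega_X^{r}\subseteq\m\,\sO_X(r(K_X+\Delta))$ (valid because either $r\ge2$ or $\Delta$ is strictly effective), whence $cx^{q_0}\in\m^{\lceil t(q_0-1)\rceil}\sO_X(\lfloor q_0K_X+(q_0-1)\Delta\rfloor)$ for all $x\in\omega_X$; this kills $cx^{q_0}F^{e_0}_{H^d_{\m}(R)}(\xi)$, and the injectivity of $cF^{e_0}_{X,\Delta}$ plus the nondegeneracy of the duality pairing $\omega_X\times H^d_{\m}(R)\to H^d_{\m}(\omega_X)$ force $\xi=0$. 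Note also that no congruence $p^e\equiv1\pmod r$ is needed, and the relevant property of the index is not that $\sO_X(i(K_X+\Delta))$ is nonfree for $0<i<r$ but simply the displayed containment of $\omega_X^r$ into $\m\,\sO_X(r(K_X+\Delta))$.
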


\begin{proof}
Let $d$ be the dimension of $R$. 
For every $\mathrm{fpt}(\Delta; \m)>\epsilon>0$ (when $\mathrm{fpt}(\Delta; \m)=0$, put $\epsilon=0$), 
by the definition of $\mathrm{fpt}(\Delta; \m)$ and Lemma~\ref{lem:sharp F-purity}, there exist $q_0=p^{e_0}$ and $c$ in~$\m^{\lfloor (\mathrm{fpt}(\Delta; \m)-\epsilon)(q_0-1) \rfloor}$ such that 
\[
cF^{e_0}_{X, \Delta} \colon H^d_{\m}(\omega_X) \to H^d_{\m}(\sO_X(\lfloor q_0K_X+(q_0-1)\Delta \rfloor))
\]
is injective. 
We consider the following commutative diagram:
\[
\xymatrix{
\omega_X \times H^d_{\m}(R) \ar[r] \ar[d] & H^d_{\m}(\omega_X) \ar[d]^{cF^{e_0}_{X, \Delta}} \\
\sO_X(\lfloor q_0K_X+ (q_0-1)\Delta \rfloor) \times H^d_{\m}(R) \ar[r] & H^d_{\m}(\sO_X(\lfloor q_0K_X+ (q_0-1)\Delta \rfloor)),
}
\]
where the left vertical map sends $(x, z)$ to $(cx^{q_0}, F^{e_0}_{H^d_{\m}(R)}(z))$. 

For each $1/r>\epsilon'>0$, we will show that $0_{H^d_{\m}(R)}^{F\sharp \m^{\mathrm{fpt}(\Delta; \m)+1/r-\epsilon-\epsilon'}}=0$, which implies the assertion. 
Let $\xi \in 0_{H^d_{\m}(R)}^{F\sharp \m^{\mathrm{fpt}(\Delta; \m)+1/r-\epsilon-\epsilon'}}$, that is, there exists $q_1 \in \N$ such that 
\[
\m^{\lceil (\mathrm{fpt}(\Delta; \m)+1/r-\epsilon-\epsilon')(q-1) \rceil}F^e_{H^d_{\m}(R)}(\xi)=0
\]
for all $q=p^e \ge q_1$. 
By the definition of weak $F$-purity, we may assume that $q_0$ is sufficiently large so that $q_0 \ge q_1$ and $\epsilon'(q_0-1) \ge 2$. 
Since $R$ is not quasi-Gorenstein, $r \ge 2$ or $\Delta$ is strictly effective. 
In either case, the fractional ideal $\omega_X^{r}=\sO_X(K_X)^{r}$ is contained in the fractional ideal $\m \sO_X(r(K_X+\Delta))$. 
Therefore, for all $x \in \omega_X$, one has 
\begin{align*}
cx^{q_0} & \in \m^{\lfloor (\mathrm{fpt}(\Delta; \m)-\epsilon)(q_0-1) \rfloor} \omega_X^{q_0}\\ 
& \subseteq \m^{\lfloor (\mathrm{fpt}(\Delta; \m)-\epsilon)(q_0-1) \rfloor+\lfloor (q_0-1)/r \rfloor} \sO_X(\lfloor q_0K_X+(q_0-1)\Delta \rfloor)\\
& \subseteq \m^{\lceil (\mathrm{fpt}(\Delta; \m)+1/r-\epsilon-\epsilon')(q_0-1) \rceil} \sO_X(\lfloor q_0K_X+(q_0-1)\Delta \rfloor) 
\end{align*}
by the choice of $q_0$. 
Since $q_0 \ge q_1$, 
\begin{align*}
cx^{q_0} F^{e_0}_{H^d_{\m}(R)}(\xi) &\in \m^{\lceil (\mathrm{fpt}(\Delta; \m)+1/r-\epsilon-\epsilon')(q_0-1) \rceil} \sO_X(\lfloor q_0K_X+(q_0-1)\Delta \rfloor) F^{e_0}_{H^d_{\m}(R)}(\xi)\\
&=0 \; \, \textup{in} \; \, H^d_{\m}(\sO_X(\lfloor q_0K_X+ (q_0-1)\Delta \rfloor)),
\end{align*}
and it then follows from the commutativity of the above diagram that $cF^{e_0}_{X, \Delta}(x\xi)=0$. 
The injectivity of the map $cF^{e_0}_{X, \Delta}$ implies that $x \xi=0$ for all $x \in \omega_X$. 
This forces $\xi$ to be zero, because $\omega_X \times H^d_{\m}(R) \to H^d_{\m}(\omega_X)$ is the duality pairing. 
Thus, $0_{H^d_{\m}(R)}^{F\sharp \m^{\mathrm{fpt}(\Delta; \m)+1/r-\epsilon-\epsilon'}}=0$. 
\end{proof}

We give an example of a standard graded Cohen-Macaulay ring $R$ that is $F$-pure, with $a(R)=-1$ and $\mathrm{fpt}(\m)=0$; this is based on \cite{Song}. The ring $R$ is $\Q$-Gorenstein, with index $2$.

\begin{eg}
Let $k$ be a field of characteristic $p\equiv 1\mod 4$, and set
\[
S=k[w,x,y,z]/(w^4+x^4+y^4+z^4)\,.
\]
By the characteristic assumption, the ring $S$ is $F$-pure. Set $R$ to be the $k$-subalgebra generated by the monomials
\[
w^4,\ w^3x,\ w^2x^2,\ wx^3,\ x^4,\quad y^4,\ y^3z,\ y^2z^2,\ yz^3,\ z^4.
\]
Then $R$ is a direct summand of $S$ as an $R$-module: one way to see this is to use the $\Z/4\times\Z/4$-grading on $S$ under which $\deg w=(1,0)=\deg x$, and $\deg y=(0,1)=\deg z$, in which case~$R$ is the subring of $S$ generated by elements of degree $(0,0)$. It follows that~$R$ is $F$-pure, normal, as well as Cohen-Macaulay.

The ring $R$ has a standard grading under which each of the monomials displayed is assigned degree one. Computing the socle modulo the system of parameters $x^4$, $y^4$, $z^4$, it follows that $a(R)=-1$. By Proposition~\ref{prop:fit=-a}, we have $c(\m)=1$.

The fractional ideal
\[
\omega_R = \frac{1}{w^2x^2}(w^3x, w^2x^2, wx^3)(y^3z, y^2z^2, yz^3)
\]
is, up to isomorphism, the graded canonical module of $R$; its second symbolic power is
\[
\omega_R^{(2)} = \frac{y^2z^2}{w^2x^2}R,
\]
so the ring $R$ is $\Q$-Gorenstein. Using Theorem~\ref{theorem:omega}, one checks that that $\nu_e(\m)=0$ for each $e\ge 1$. It follows that $\mathrm{fpt}(\m)=0$.
\end{eg}

\begin{cor}
\label{cor:Delta}
Let $(R, \m)$ be an $F$-finite $F$-pure normal local ring of characteristic $p>0$. 
\begin{enumerate}[\quad\rm(1)]
\item Suppose that there exists an effective $\Q$-divisor $\Delta$ on $X=\Spec R$ such that $K_X+\Delta$ is $\Q$-Cartier, $(X, \Delta)$ is $F$-pure and $\mathrm{fpt}(\Delta; \m)=\mathrm{fpt}(\m)$. 
Then Conjecture~\ref{conj:fpt=fit} holds for this $R$. 

\item If $c(\m)=0$, then $R$ is quasi-Gorenstein. 
\end{enumerate}
\end{cor}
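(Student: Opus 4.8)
The approach is to extract both statements from Theorem~\ref{thm:non-Gorenstein} by formal manipulations with Lemma~\ref{lem:basic}. Since $R$ is $F$-pure it is $F$-injective, so $c(\m)$ is defined, and by Lemma~\ref{lem:basic}~(3) and~(4) we have $0\le\mathrm{fpt}(\m)\le c(\m)\le\mathrm{ht}\,\m=\dim R<\infty$. For part~(1) the ``only if'' implication is exactly the last assertion of Lemma~\ref{lem:basic}~(3). For the ``if'' implication, assume $\mathrm{fpt}(\m)=c(\m)$ and suppose toward a contradiction that $R$ is not quasi-Gorenstein; letting $r$ be the index of $K_X+\Delta$, Theorem~\ref{thm:non-Gorenstein} gives $\mathrm{fpt}(\Delta;\m)+\frac1r\le c(\m)$, and substituting the hypothesis $\mathrm{fpt}(\Delta;\m)=\mathrm{fpt}(\m)=c(\m)$, together with the finiteness of $c(\m)$, forces $\frac1r\le0$, which is absurd. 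Hence $R$ is quasi-Gorenstein, so Conjecture~\ref{conj:fpt=fit} holds for $R$.

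For part~(2), first note that $c(\m)=0$ already forces $\mathrm{fpt}(\m)=0$. I would then invoke the standard fact that an $F$-finite, normal, $F$-pure local ring admits an effective $\Q$-divisor $\Delta$ on $X=\Spec R$ with $K_X+\Delta\sim_{\Q}0$ (in particular $\Q$-Cartier) and $(X,\Delta)$ $F$-pure: such a $\Delta$ is produced from a splitting of some iterate of Frobenius on $R$ via the usual correspondence between Frobenius splittings of $R$ and effective $\Q$-divisors $\Delta$ on $X$ with $K_X+\Delta\sim_{\Q}0$ (cf.~\cite{HW, Sch}). Because $\Delta\ge0$ we have $R\subseteq\sO_X(\lfloor(p^e-1)\Delta\rfloor)$, and restricting to $F^e_*R\hookrightarrow F^e_*\sO_X(\lfloor(p^e-1)\Delta\rfloor)$ turns any splitting witnessing weak $F$-purity of $((R,\Delta);\m^t)$ into one witnessing weak $F$-purity of $(R,\m^t)$; hence $\mathrm{fpt}(\Delta;\m)\le\mathrm{fpt}(\m)=0$, and since $F$-pure thresholds are nonnegative, $\mathrm{fpt}(\Delta;\m)=\mathrm{fpt}(\m)$. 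Now part~(1) applies to this $\Delta$, and because $\mathrm{fpt}(\m)=c(\m)$ we conclude that $R$ is quasi-Gorenstein. (One could equally bypass part~(1) and feed $\Delta$ directly into Theorem~\ref{thm:non-Gorenstein}: if $R$ were not quasi-Gorenstein we would obtain $0\le\mathrm{fpt}(\Delta;\m)\le c(\m)-\frac1r=-\frac1r<0$.)

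The one step that is not purely formal is the existence of the boundary divisor $\Delta$ needed in part~(2) --- that is, producing, from an arbitrary Frobenius splitting of the normal local ring $R$, a compatible effective $\Q$-divisor making $K_X+\Delta$ $\Q$-Cartier. I expect this (and locating the cleanest reference for it) to be the main point to nail down; the rest is bookkeeping with the inequalities already supplied by Theorem~\ref{thm:non-Gorenstein}, Lemma~\ref{lem:basic}, and the definition of the $F$-pure threshold.
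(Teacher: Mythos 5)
Your proposal is correct and follows essentially the same route as the paper: part (1) is read off from Theorem~\ref{thm:non-Gorenstein} (plus Lemma~\ref{lem:basic}~(3) for the easy direction), and part (2) is reduced to part (1) by producing an effective $\Q$-divisor $\Delta$ with $K_X+\Delta$ $\Q$-Cartier and $(X,\Delta)$ $F$-pure, then squeezing $0\le\mathrm{fpt}(\Delta;\m)\le\mathrm{fpt}(\m)\le c(\m)=0$. The existence of such a $\Delta$, which you rightly flag as the one non-formal ingredient, is exactly what the paper cites from Schwede--Smith \cite[Theorem~4.3~(ii)]{SS}.
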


\begin{proof}
(1) immediately follows from Theorem~\ref{thm:non-Gorenstein}. 
We will show (2). 
Since $R$ is $F$-pure, then by \cite[Theorem~4.3~(ii)]{SS}, there exists an effective $\Q$-divisor $\Delta$ on $X$ such that $(R, \Delta)$ is sharply $F$-pure with $K_X+\Delta$ $\Q$-Cartier. Then 
\[
0 \le \mathrm{fpt}(\Delta;\m) \le \mathrm{fpt}(\m) \le c(\m)=0,
\]
and the assertion follows from (1). 
\end{proof}

When is the assumption of Corollary~\ref{cor:Delta}~(1) satisfied? 
If the pair $(R, \m^{\mathrm{fpt}(\m)})$ is sharply $F$-pure, then by a similar argument to the proof of \cite[Theorem~4.3~(ii)]{SS}, there exists an effective $\Q$-divisor $\Delta$ on $X$ such that $((R, \Delta);\m^{\mathrm{fpt}(\m)})$ is sharply $F$-pure with $K_X+\Delta$ $\Q$-Cartier. 
Then $\mathrm{fpt}(\Delta; \m)=\mathrm{fpt}(\m)$, that is, the assumption of Corollary~\ref{cor:Delta}~(1) is satisfied. 

\begin{ques}[{cf.~\cite[Question~3.6]{HWY}}]
Let $(R, \m)$ be an $F$-finite $F$-pure normal local ring of characteristic $p>0$. 
When is the pair $(R, \m^{\mathrm{fpt}(\m)})$ sharply or weakly $F$-pure? 
\end{ques}

We will show in Proposition~\ref{prop:critical value} that if $(R, \m)$ is an $F$-pure $\Q$-Gorenstein normal standard graded ring over an $F$-finite field of characteristic $p>0$ with Gorenstein index not divisible by $p$, 
then $(R, \m^{\mathrm{fpt}(\m)})$ is sharply $F$-pure. 

We now prove the main result of this section:

\begin{thm}
\label{thm:anti-canonical}
Let $(R, \m)$ be an $F$-finite $F$-pure normal local ring of characteristic $p>0$. 
Suppose that the anti-canonical cover $\bigoplus_{n \ge 0}\sO_X(-nK_X)$ of $X:=\Spec R$ is Noetherian. 
Then $\mathrm{fpt}(\m)=c(\m)$ if and only if $R$ is quasi-Gorenstein. 
\end{thm}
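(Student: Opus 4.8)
The plan: the ``if'' direction is immediate from Lemma~\ref{lem:basic}~(3), so I would concentrate on proving that $\mathrm{fpt}(\m)=c(\m)$ forces $R$ to be quasi-Gorenstein. The key reduction is to Corollary~\ref{cor:Delta}~(1): it suffices to produce a single effective $\Q$-divisor $\Delta$ on $X=\Spec R$ such that $K_X+\Delta$ is $\Q$-Cartier, $(X,\Delta)$ is $F$-pure, and $\mathrm{fpt}(\Delta;\m)=\mathrm{fpt}(\m)$. Indeed, given such a $\Delta$, Corollary~\ref{cor:Delta}~(1) says exactly that Conjecture~\ref{conj:fpt=fit} holds for $R$, which is the assertion of the theorem; and, as a bonus, plugging $\Delta$ into Theorem~\ref{thm:non-Gorenstein} even gives the quantitative gap $\mathrm{fpt}(\m)+1/r\le c(\m)$ whenever $R$ is not quasi-Gorenstein, $r$ being the index of $K_X+\Delta$. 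So the whole theorem is reduced to constructing such a $\Delta$, and this construction, which is where the hypothesis on the anti-canonical cover enters, is the step I expect to be the main obstacle; it generalizes the argument of \cite[Theorem~2.7]{TW}.

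For the construction I would proceed as follows. Since $\bigoplus_{n\ge 0}\sO_X(-nK_X)$ is Noetherian, fix $r\ge 1$ so that its $r$-th Veronese subalgebra is generated in degree one, that is, every section of $\sO_X(-nrK_X)$ is a finite sum of products of $n$ sections of $\sO_X(-rK_X)$. I look for $\Delta$ of the form $\tfrac1r D$, where $D=\mathrm{div}(g_0)-rK_X\ge 0$ is the effective divisor cut out by a suitably chosen nonzero section $g_0\in\sO_X(-rK_X)$; then automatically $r(K_X+\Delta)=rK_X+D\sim 0$, so $K_X+\Delta$ is $\Q$-Cartier of index dividing $r$, and $(X,\Delta)$ being $F$-pure will come out of the choice of $g_0$. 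To tie $g_0$ to the $F$-pure threshold, I use the identification $\Hom_R(F^e_*R,R)\cong F^e_*\sO_X((1-p^e)K_X)$ (equivalently, the iterated trace maps $\mathrm{Tr}^e\colon F^e_*\omega_R\to\omega_R$): for $t<\mathrm{fpt}(\m)$, a splitting $R\to F^e_*R$ with $1\mapsto F^e_*c$ and $c\in\m^{\lceil t(p^e-1)\rceil}$ witnessing sharp $F$-purity of $(R,\m^t)$ corresponds to a nonzero section $g_e\in\sO_X((1-p^e)K_X)$. The delicate part is to arrange, by a limiting argument over the exponents $e$ supplied by the definition of $\mathrm{fpt}(\m)$ (after replacing them by suitable multiples, which is harmless since a Frobenius splitting at level $e$ composes with itself to give one at every multiple of $e$), that $g_0$ and these splittings can be chosen compatibly so that $g_e$ is divisible by the appropriate power of $g_0$; equivalently, $g_e\in\sO_X((1-p^e)(K_X+\Delta))=\sO_X\bigl((1-p^e)K_X-(p^e-1)\Delta\bigr)$. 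It is precisely the finite generation of the anti-canonical cover --- via the degree-one generation of its $r$-th Veronese, which controls how sections in high ``anti-canonical'' degree are built from sections in degree $r$ --- that should make this extraction possible, together with absorbing the bounded contribution of the finitely many extra algebra generators. The standard reduction to infinite residue field, via Lemma~\ref{lemma:basic pair}~(4), is a harmless preliminary here.

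Granting a $\Delta=\tfrac1r D$ with that property, the rest is formal. The inequality $\mathrm{fpt}(\Delta;\m)\le\mathrm{fpt}(\m)$ holds unconditionally: since $\Delta\ge 0$ there is an inclusion $F^e_*R\hookrightarrow F^e_*\sO_X(\lfloor(p^e-1)\Delta\rfloor)$, and restricting any splitting of $R\to F^e_*\sO_X(\lfloor(p^e-1)\Delta\rfloor)$ along it gives a splitting of $R\to F^e_*R$ with the same image of $1$, so weak $F$-purity of $((R,\Delta);\m^t)$ implies that of $(R,\m^t)$. For the reverse inequality, a section $g_e\in\sO_X\bigl((1-p^e)K_X-(p^e-1)\Delta\bigr)$ lies in the $R$-dual of $F^e_*\sO_X(\lfloor(p^e-1)\Delta\rfloor)$ (because $\lfloor(p^e-1)\Delta\rfloor\le(p^e-1)\Delta$), and hence defines a splitting of $R\to F^e_*\sO_X(\lfloor(p^e-1)\Delta\rfloor)$ sending $1$ to $F^e_*c$ for the same $c$; so $((R,\Delta);\m^t)$ is sharply $F$-pure, and letting $t\uparrow\mathrm{fpt}(\m)$ yields $\mathrm{fpt}(\Delta;\m)\ge\mathrm{fpt}(\m)$. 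Finally, composing such a $\Delta$-twisted splitting with multiplication by $F^e_*c$ on its source --- legitimate because $c\in R$ and $\sO_X(\lfloor(p^e-1)\Delta\rfloor)$ is an $R$-module --- produces a splitting of $R\to F^e_*\sO_X(\lfloor(p^e-1)\Delta\rfloor)$ with $1\mapsto F^e_*1$, so $(X,\Delta)$ is $F$-pure, completing the reduction and hence the proof.
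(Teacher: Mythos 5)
Your reduction to Corollary~\ref{cor:Delta}~(1) is logically valid and the formal verifications at the end are fine, but the entire content of the theorem has been pushed into the one step you yourself flag as ``the main obstacle'' and do not carry out: producing a \emph{single} effective $\Q$-divisor $\Delta$ with $K_X+\Delta$ $\Q$-Cartier, $(X,\Delta)$ $F$-pure, and $\mathrm{fpt}(\Delta;\m)=\mathrm{fpt}(\m)$. This is a genuine gap, and the paper deliberately avoids this route: the discussion following Corollary~\ref{cor:Delta} notes that such a $\Delta$ exists when $(R,\m^{\mathrm{fpt}(\m)})$ is sharply $F$-pure, and then poses as an open question when that holds (it is settled only in the graded $\Q$-Gorenstein case with index prime to $p$, Proposition~\ref{prop:critical value}). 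The obstruction to your plan is this: $\mathrm{fpt}(\m)$ is a supremum, each $t<\mathrm{fpt}(\m)$ is witnessed by splittings at various exponents $e$, and under the identification $\Hom_R(F^e_*R,R)\cong F^e_*\sO_X((1-q)K_X)=F^e_*\bigl(\sO_X(-rK_X)^{n_e}\sO_X(-j_eK_X)\bigr)$ the section $g_e$ attached to such a splitting is a \emph{sum} of monomials $\phi_1^{m_1}\cdots\phi_l^{m_l}\psi_{\underline m}$ in the finitely many generators $\phi_i$ of $\sO_X(-rK_X)$. One can extract a single monomial that still gives a splitting, but \emph{which} monomial works varies with $e$ in an uncontrolled way, so there is no reason the $g_e$ are all divisible by the appropriate powers of one fixed $g_0$; no single $\Delta=\frac1rD$ is forced to satisfy $\mathrm{fpt}(\Delta;\m)=\mathrm{fpt}(\m)$.

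The paper's proof shows that no single $\Delta$ is needed. Assuming $R$ is not quasi-Gorenstein, for each $\epsilon>0$ it fixes one suitable $e$, extracts one monomial as above, and lets the auxiliary effective divisor $m_1D_1+\cdots+m_lD_l$ (where $D_i$ is the effective divisor cut out by $\phi_i$, so $D_i\sim-rK_X$) depend on $e$. The only uniformity required is the containment $\omega_X^r\subseteq\m\,\sO_X(rK_X+D_i)$ for each fixed $i$ --- which holds for every $i$ simultaneously because $R$ is not quasi-Gorenstein and $rK_X+D_i\sim0$ --- whence $\omega_X^q\subseteq\m^{\lfloor(q-1)/r\rfloor}\sO_X(qK_X+\sum_i m_iD_i)$ for \emph{every} choice of $\underline m$. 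Dualizing the resulting map $F^e_*\sO_X(\sum_i m_iD_i)\to R$ and running the diagram argument of Theorem~\ref{thm:non-Gorenstein} with this $e$-dependent divisor gives $0^{F\sharp\m^{\mathrm{fpt}(\m)+1/r-\epsilon-\epsilon'}}_{H^d_\m(R)}=0$ directly, hence $\mathrm{fpt}(\m)+\frac1r\le c(\m)$. To salvage your approach you would have to prove that $(R,\m^{\mathrm{fpt}(\m)})$ is sharply or weakly $F$-pure under the Noetherian anti-canonical cover hypothesis, which is not established in the paper and is not a consequence of the finite generation alone as you have used it; as written, the proposal is not a proof.
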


\begin{proof}
Since $\bigoplus_{n \ge 0}\sO_X(-nK_X)$ is Noetherian, one can find an integer $r \ge 1$ satisfying the following: for every $q=p^e$, if we write $q-1=rn_e+j_e$ with $n_e \ge 0$ and $r-1 \ge j_e \ge 0$, then 
\[
\sO_X((1-q)K_X)=\sO_X(-rK_X)^{n_e} \sO_X(-j_eK_X).
\]
Suppose that $R$ is not quasi-Gorenstein, and we will show that $\mathrm{fpt}(\m)+\frac{1}{r} \le c(\m)$. 
Let $\phi_1, \dots, \phi_l$ be a system of generators for $\sO_X(-rK_X)$. 

For every $\mathrm{fpt}(\m) >\epsilon>0$, there exist a sufficiently large $q=p^e$ and $c \in \m^{\lfloor (\mathrm{fpt}(\m)-\epsilon)(q-1) \rfloor}$ such that the $R$-linear map $R \to F^e_*R$ sending $1$ to $F^e_*c$ splits (when $\mathrm{fpt}(\m)=0$, put $\epsilon=0$ and $c=1$). That is, there exists an $R$-linear map $\phi \colon F^e_*R \to R$ sending $F^e_*c$ to $1$. 
It follows from Grothendieck duality that there exists an isomorphism 
\[
\Phi \colon \mathrm{Hom}_R(F^e_*R, R) \cong F^e_*\sO_X((1-q)K_X)=F^e_*\left(\sO_X(-rK_X)^{n_e} \sO_X(-j_eK_X)\right).
\]
We write $\Phi(\phi)=\sum_{\underline{m}} F^e_*(\phi_1^{m_1} \cdots \phi_l^{m_l} \psi_{\underline{m}})$ with $\psi_{\underline{m}} \in \sO_X(-j_eK_X)$, where $\underline{m}$ runs through all elements of $\{(m_1, \dots, m_l) \in \Z_{\ge 0}^l \mid m_1+\cdots+m_l=n_e\}$. 
Then 
\begin{align*}
1=\phi(F^e_*c)
&=\Phi^{-1}\left(\sum_{\underline{m}} F^e_*(\phi_1^{m_1} \cdots \phi_l^{m_l} \psi_{\underline{m}})\right)(F^e_*c)\\
&=\sum_{\underline{m}}\Phi^{-1}\left(F^e_*(\phi_1^{m_1} \cdots \phi_l^{m_l} \psi_{\underline{m}})\right)(F^e_*c). 
\end{align*}
Therefore, there exists $\underline{m}=(m_1, \dots, m_l) \in \Z_{\ge 0}^l$ with $\sum_{i=1}^l m_i=n_e$ such that \linebreak $\Phi^{-1}(F^e_*(\phi_1^{m_1} \cdots \phi_l^{m_l} \psi_{\underline{m}}))(F^e_*c)$ is a unit. 
Replacing $c$ by a unit multiple, we may assume that $\Phi(\phi)=F^e_*(\phi_1^{m_1} \cdots \phi_l^{m_l} \psi_{\underline{m}})$. 

Since each $\phi_i$ determines an effective divisor $D_i$ which is linearly equivalent to $-rK_X$, the section $F^e_*(\phi_1^{m_1} \cdots \phi_l^{m_l} \psi_{\underline{m}})$ lies in $F^e_*\sO_X((1-q)K_X-m_1D_1-\cdots-m_lD_l)$ and we have the following commutative diagram: 
\[
\xymatrix{
F^e_*\sO_X((1-q)K_X-m_1D_1-\cdots-m_lD_l) \ar[r] & F^e_*\sO_X((1-q)K_X)\\
\mathrm{Hom}_R(F^e_*\sO_X(m_1D_1+\cdots+m_lD_l), R) \ar[r] \ar[u] & \mathrm{Hom}_R(F^e_*R, R) \ar[u]_{\Phi}, 
}
\]
where the vertical maps are isomorphisms. 
Therefore, $\phi$ induces an $R$-linear map 
\[F^e_*\sO_X(m_1D_1+\cdots+m_lD_l) \to R\] sending $F^e_*c$ to $1$. 
Then its Matlis dual 
\[
cF^e \colon H^d_{\m}(\omega_R) \to H^d_{\m}(\sO_X(qK_X+m_1D_1+\cdots+m_lD_l))
\]
is injective. 
On the other hand, since $R$ is not quasi-Gorenstein and $rK_X+D_i\sim 0$, the fractional ideal $\omega_X^r=\sO_X(K_X)^r$ is contained in $\m \sO_X(rK_X+D_i)$ for each $i=1, \dots, l$. 
Hence, $\omega_X^q$ is contained in $\m^{\lfloor (q-1)/r \rfloor} \sO_X(qK_X+m_1D_1+\cdots+m_lD_l)$. 
It then follows from an analogous argument to the proof of Theorem~\ref{thm:non-Gorenstein} that $\mathrm{fpt}(\m)+\frac{1}{r} \le c(\m)$. 
\end{proof}

\begin{rem}
\label{rem:finite generation}
In the setting of Theorem~\ref{thm:anti-canonical}, it is well-known that $\bigoplus_{n \ge 0}\sO_X(-nK_X)$ is Noetherian if $R$ is $\Q$-Gorenstein, $R$ is a normal semigroup ring or $R$ is a determinantal ring. 
We briefly explain the reason why $\bigoplus_{n \ge 0}\sO_X(-nK_X)$ is Noetherian in the latter case. 

Let $R$ be the determinantal ring $k[T]/I$, where $T$ is an $m\times n$ matrix of indeterminates with $m\le n$, and $I$ is the ideal generated by the size $t$ minors of~$T$ where $1\le t\le m$. Then the anti-canonical class of $R$ is the class of the $(n-m)$-th symbolic power of the prime ideal~$\p$ generated by the size $t-1$ minors of the first $t-1$ rows of $T$ by \cite[Theorem~8.8]{BV}. Moreover, the symbolic powers of $\p$ coincide with its ordinary powers by \cite[Corollary~7.10]{BV}, so the anti-canonical cover is the Rees algebra of $\p^{n-m}$. In particular, it is Noetherian.
\end{rem}

Another case where $\bigoplus_{n \ge 0}\sO_X(-nK_X)$ is Noetherian is the following: 

\begin{cor}
\label{cor:dim 3}
Let $X$ be a three-dimensional strongly $F$-regular variety over an algebraically closed field of characteristic $p>5$ and $x$ be a closed point of $X$. 
Then $\mathrm{fpt}(\m_x)=c(\m_x)$ if and only if $X$ is Gorenstein at $x$. 
\end{cor}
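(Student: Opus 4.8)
The plan is to reduce the claim to Theorem~\ref{thm:anti-canonical}. Set $R=\sO_{X,x}$ and $Y=\Spec R$. Since $X$ is a variety over an algebraically closed---hence perfect---field, $R$ is $F$-finite, and since $X$ is strongly $F$-regular, so is the local ring $R$; in particular $R$ is $F$-pure and normal. Thus Theorem~\ref{thm:anti-canonical} applies to $R$ as soon as one knows that the anti-canonical cover $\bigoplus_{n\ge 0}\sO_Y(-nK_Y)$ is Noetherian, and it then gives $\mathrm{fpt}(\m_x)=c(\m_x)$ if and only if $R$ is quasi-Gorenstein. Since every strongly $F$-regular ring is Cohen--Macaulay, $R$ is quasi-Gorenstein if and only if it is Gorenstein, that is, if and only if $X$ is Gorenstein at $x$. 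So the entire statement reduces to Noetherianity of the anti-canonical cover of a three-dimensional strongly $F$-regular local ring of characteristic $p>5$.

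To establish that, I would pass from $F$-singularities to birational geometry. Because $R$ is strongly $F$-regular, there is an effective $\Q$-divisor $\Delta$ on $Y$ such that $(Y,\Delta)$ is strongly $F$-regular and $K_Y+\Delta$ is $\Q$-Cartier (the strongly $F$-regular analogue of the construction used in the proof of Corollary~\ref{cor:Delta}(2); cf.~\cite[Theorem~4.3]{SS}). A strongly $F$-regular pair whose log canonical divisor is $\Q$-Cartier is kawamata log terminal by \cite{HW}, and this implication holds without any restriction on $p$, so $(Y,\Delta)$ is klt. Hence $Y$ is a three-dimensional klt-type singularity over an algebraically closed field of characteristic $p>5$.

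The one substantial point that remains is that $\bigoplus_{n\ge 0}\sO_Y(-nK_Y)$ is a Noetherian ring for such a $Y$; this is exactly case~(4) of the list following Theorem~A. It is obtained from the three-dimensional minimal model program in characteristic $p>5$: after replacing $Y$ by a small $\Q$-factorialization and running the relevant MMP, one deduces finite generation of the anti-canonical ring. This finite-generation input---and with it the hypothesis $p>5$, which is precisely what makes the positive-characteristic threefold MMP available---is the main obstacle; once it is granted, the corollary follows formally from Theorem~\ref{thm:anti-canonical} together with the Cohen--Macaulayness of strongly $F$-regular rings, as explained above.
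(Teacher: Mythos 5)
Your proposal is correct and follows essentially the same route as the paper: reduce to Theorem~\ref{thm:anti-canonical}, use \cite[Theorem~4.3]{SS} together with \cite[Theorem~3.3]{HW} to produce a klt pair $(X,\Delta)$ with $K_X+\Delta$ $\Q$-Cartier, and invoke the three-dimensional MMP in characteristic $p>5$ (via \cite[Theorem~2.28]{CEMS}) for Noetherianity of the anti-canonical cover. Your explicit remark that Cohen--Macaulayness of strongly $F$-regular rings upgrades ``quasi-Gorenstein'' to ``Gorenstein'' is a point the paper leaves implicit, but the argument is the same.
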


\begin{proof}
We may assume that $X$ is affine. 
By \cite[Theorem~4.3]{SS} and \cite[Theorem~3.3]{HW}, there exists an effective $\Q$-divisor $\Delta$ on $X$ such that $K_X+\Delta$ is $\Q$-Cartier and $(X, \Delta)$ is strongly $F$-regular and in particular is klt. 
Since the minimal model program holds for three-dimensional klt pairs in characteristic $p>5$, the anti-canonical cover $\bigoplus_{n \ge 0}\sO_X(-nK_X)$ is Noetherian (see for example \cite[Theorem~2.28]{CEMS}). Thus, the assertion follows from Theorem~\ref{thm:anti-canonical}. 
\end{proof}

A combination of Proposition~\ref{prop:imply}, Theorem~\ref{thm:anti-canonical}, Remark~\ref{rem:finite generation} and Corollary~\ref{cor:dim 3} gives an extension of \cite[Theorem~1.2~(2)]{HWY}:

\begin{cor}
\label{cor:conclusion I}
Let $R$ be an $F$-pure normal standard graded ring, with $R_0$ an $F$-finite field of characteristic $p>0$. Let $\m$ be the homogeneous maximal ideal of $R$. 
Suppose that the anti-canonical cover $\bigoplus_{n \ge 0} \sO_X(-nK_X)$ of $X:=\Spec R$ is Noetherian. This assumption is satisfied, for example, in each of the following cases: 
\begin{enumerate}[\quad\rm(1)]
\item $R$ is $\Q$-Gorenstein, 
\item $R$ is a semigroup ring,
\item $R$ is a determinantal ring, 
\item $R$ is a strongly $F$-regular ring of dimension at most three,
\item $R$ is a four-dimensional strongly $F$-regular ring and $p>5$. 
\end{enumerate}
Then $\mathrm{fpt}(\m)=-a(R)$ if and only if $R$ is quasi-Gorenstein. 
\end{cor}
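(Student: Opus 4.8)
The plan is to deduce Corollary~\ref{cor:conclusion I} formally from results already in hand, the real content sitting in the local statement Theorem~\ref{thm:anti-canonical}. First I would reduce to a statement about the local ring $R_\m$: by Proposition~\ref{prop:imply}, the equivalence $\mathrm{fpt}(\m) = -a(R) \Leftrightarrow R \text{ is quasi-Gorenstein}$ follows as soon as one knows that Conjecture~\ref{conj:fpt=fit} holds for $R_\m$, i.e.\ that $\mathrm{fpt}(\m R_\m) = c(\m R_\m)$ if and only if $R_\m$ is quasi-Gorenstein. Here I would also keep in mind that $R$ is quasi-Gorenstein precisely when $R_\m$ is, that $c(\m) = c(\m R_\m)$ by Remark~\ref{rem:graded case}, and that $\mathrm{fpt}(\m) \le \mathrm{fpt}(\m R_\m)$, all of which are the ingredients of Proposition~\ref{prop:imply}.

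Next I would verify that Theorem~\ref{thm:anti-canonical} applies to $R_\m$. The ring $R_\m$ is $F$-finite, normal and $F$-pure, all inherited from $R$ (using that $F$-purity passes to localizations). The only remaining hypothesis is that the anti-canonical cover of $\Spec R_\m$ be Noetherian; since forming $\sO_X(-nK_X)$ commutes with localization, this cover is the localization at $\m$ of $\bigoplus_{n\ge0}\sO_X(-nK_X)$, hence Noetherian whenever the cover of $X=\Spec R$ is. Theorem~\ref{thm:anti-canonical} then yields exactly Conjecture~\ref{conj:fpt=fit} for $R_\m$, and combined with the previous paragraph this proves the stated equivalence for the graded ring $R$.

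Finally I would justify that each of the listed cases falls under the Noetherianity hypothesis. For (1), when $rK_X$ is Cartier the truncation $\bigoplus_{n\ge0}\sO_X(-nrK_X)$ is a Rees-type algebra of an invertible fractional ideal, hence a finitely generated $R$-algebra, and the full anti-canonical cover is a finite module over it. Cases (2) and (3) are recorded in Remark~\ref{rem:finite generation}. Cases (4) and (5) go exactly as in the proof of Corollary~\ref{cor:dim 3}: using \cite[Theorem~4.3]{SS} and \cite[Theorem~3.3]{HW} one picks an effective $\Q$-divisor $\Delta$ on $X$ with $K_X+\Delta$ $\Q$-Cartier and $(X,\Delta)$ klt, and then finite generation of the anti-canonical ring follows from the minimal model program for klt pairs --- available in dimension at most three, and in dimension four for $p>5$.

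I expect no genuine obstacle here: once Theorem~\ref{thm:anti-canonical} is proved, the corollary is essentially bookkeeping. The points that do need a little care are the descent of quasi-Gorensteinness and of Noetherianity of the anti-canonical cover to the localization $R_\m$, and the geometric finite-generation inputs behind (1)--(5); among these the four-dimensional strongly $F$-regular case is the most delicate, as it leans on the recent positive-characteristic minimal model program for fourfolds.
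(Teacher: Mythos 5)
Your overall architecture (reduce via Proposition~\ref{prop:imply} to Conjecture~\ref{conj:fpt=fit} for $R_\m$, check that the hypotheses of Theorem~\ref{thm:anti-canonical} localize, and handle cases (1)--(3) by the Rees-algebra argument and Remark~\ref{rem:finite generation}) matches the paper and is fine. The genuine gap is in your justification of cases (4) and (5). You propose to argue ``exactly as in Corollary~\ref{cor:dim 3}'': choose $\Delta$ on $X=\Spec R$ with $(X,\Delta)$ klt and $K_X+\Delta$ $\Q$-Cartier, and then invoke the MMP for klt pairs in $\dim X$ --- ``available in dimension at most three, and in dimension four for $p>5$.'' This fails on both counts. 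For case (4) there is no characteristic restriction, but the three-dimensional MMP in positive characteristic (Hacon--Xu, Birkar, Birkar--Waldron) is only known for $p>5$, which is precisely why Corollary~\ref{cor:dim 3} carries that hypothesis; your argument would therefore not cover a three-dimensional strongly $F$-regular $R$ in characteristic $2$, $3$, or $5$. For case (5) the MMP for klt fourfolds in positive characteristic is simply not available (it was not at the time of the paper, and no such result is cited), so finite generation of the anti-canonical algebra cannot be obtained by running the MMP on $X$ itself.

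The paper's route exploits the standard grading to drop the dimension by one: one may assume $R_0$ algebraically closed (strong $F$-regularity is preserved under the flat base change, \cite[Theorem~3.6]{Ab}), and writes $R=\bigoplus_{m\ge 0}H^0(Y,\sO_Y(mD))$ for $Y=\Proj R$ and $D$ very ample. By \cite[Theorem~1.1]{SS}, $Y$ is a normal projective variety of Fano type of dimension $\dim R-1\le 3$, and the anti-canonical cover of the cone is identified with $\bigoplus_{m\in\Z}\bigoplus_{n\ge 0}H^0(Y,\sO_Y(mD-nK_Y))$. Its finite generation then follows from the MMP on $Y$ --- for klt surfaces in every characteristic (covering case (4) with no restriction on $p$), and for threefolds when $p>5$ (covering case (5)) --- via the argument of \cite[Corollary~1.1.9]{BCHM}. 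In short: the hypothesis $p>5$ in case (5) is the characteristic bound for the \emph{three-dimensional} MMP applied to $\Proj R$, not a fourfold MMP applied to $\Spec R$; without this reduction your proof of (4) is incomplete and your proof of (5) rests on an unavailable input.
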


\begin{proof}
If $\bigoplus_{n \ge 0} \sO_X(-nK_X)$ is Noetherian, then the assertion follows from Proposition~\ref{prop:imply} and Theorem~\ref{thm:anti-canonical}. 
By Remark~\ref{rem:finite generation}, $\bigoplus_{n \ge 0} \sO_X(-nK_X)$ is Noetherian in the case of (1), (2) and (3). 
We will explain why $\bigoplus_{n \ge 0} \sO_X(-nK_X)$ is Noetherian in the case of (4) and (5). 

Since two-dimensional strongly $F$-regular rings are $\Q$-Gorenstein, we may assume that $\dim X \ge 3$. 
Also, since strong $F$-regularity is preserved under flat base change by \cite[Theorem~3.6]{Ab}, we may assume that $R_0$ is an algebraically closed field. 
Let $D$ be a very ample divisor on $Y:=\Proj R$ so that $R=\bigoplus_{m \ge 0}H^0(Y, \sO_Y(mD))$. 
It follows from \cite[Theorem~1.1]{SS} that $Y$ is a normal projective variety of Fano type of dimension at most three. 
It is known that the minimal model program holds for klt surfaces and also for three-dimensional klt pairs in characteristic $p>5$ (see \cite{HX, Bi, BW}). 
Thus, applying essentially the same argument as the proof of \cite[Corollary~1.1.9]{BCHM}, we can see that 
\[
\bigoplus_{n \ge 0}\sO_X(-nK_X) \cong \bigoplus_{m \in \Z} \bigoplus_{n \ge 0}H^0(Y, \sO_Y(mD-nK_Y))
\]
is Noetherian. 
\end{proof}

We also give an answer to \cite[Question~6.7]{HWY}. 
Before stating the result, we fix some notation. 
Let $M=\Z^d$, $N=\Hom_{\Z}(M, \Z)$, and denote the duality pairing between $M_{\R}:=M\otimes_{\Z} \R$ and $N_{\R}:=N\otimes_{\Z} \R$ by $\langle - , - \rangle\colon M_{\R} \times N_{\R} \to \R$. 
Let $\sigma \subset N_{\R}$ be a strongly convex rational polyhedral cone and denote its dual cone by $\sigma^{\vee}$. 
Let $R=k[\sigma^{\vee} \cap M]$ be the affine semigroup ring over a field $k$ defined by $\sigma$ 
and $\m$ be the unique monomial maximal ideal of $R$. 
The Newton Polyhedron $P(\m) \subseteq M_{\R}$ of $\m$ is defined as the convex hull of the set of exponents $m \in M$ of monomials $x^m \in \m$. 
We define the function $\lambda_{\m}$ by 
\[
\lambda_{\m}\colon\sigma^{\vee} \to \R \quad u \mapsto \sup\{\lambda \in \R_{ \ge 0} \mid u \in \lambda P(\m)\},
\]
where we set $\lambda P(\m)=\sigma^{\vee}$ if $\lambda=0$, 
and denote 
\[
a_{\sigma}(R):=-\min\{\lambda_{\m}(u) \mid u \in \mathrm{Int}(\sigma^{\vee}) \cap M\}.
\]
Note that $a_{\sigma}(R)$ coincides with the $a$-invariant $a(R)$ if $R$ is standard graded. 

\begin{cor}
\label{cor:toric charp}
We use the above notation. 
Let $R=k[\sigma^{\vee} \cap M]$ be a (not necessarily standard graded) affine semigroup ring over an $F$-finite field $k$ of characteristic $p>0$ defined by $\sigma$.
\begin{enumerate}[\quad\rm(1)]
\item Then $c(\m)=-a_{\sigma}(R)$. 
\item $\mathrm{fpt}(\m)=-a_{\sigma}(R)$ if and only if $R$ is Gorenstein. 
\end{enumerate}
\end{cor}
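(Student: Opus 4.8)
The plan is to establish part (1) by a direct combinatorial computation of both $c(\m)$ and $a_\sigma(R)$, and then to deduce part (2) from part (1) together with Theorem~\ref{thm:anti-canonical}. First I would recall the toric description of the canonical module: $\omega_R = \bigoplus_{u \in \mathrm{Int}(\sigma^\vee)\cap M} k\cdot x^u$, and the local cohomology module $H^d_\m(R)$ has graded pieces indexed (dually) by $-\mathrm{Int}(\sigma^\vee)\cap M$, with the Frobenius action on a graded piece in degree $v$ being multiplication-by-$x^{(p^e-1)v}$ into degree $p^e v$. Using Remark~\ref{remark:hara-takagi} (or directly the definition of $0^{F\sharp\m^t}_{H^d_\m(R)}$), the condition that this submodule vanishes translates, monomial by monomial, into the statement that for every $u \in \mathrm{Int}(\sigma^\vee)\cap M$ and all large $q=p^e$ one has $\m^{\lceil t(q-1)\rceil}$ meeting the appropriate translate of $\sigma^\vee$; concretely, that the monomial $x^{qu}$ (equivalently $x^{(q-1)u}$ up to an element of $\mathrm{Int}(\sigma^\vee)$) lies in $\m^{\lceil t(q-1)\rceil}$. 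By the definition of the Newton polyhedron $P(\m)$ and the function $\lambda_\m$, the largest power of $\m$ containing $x^w$ is $\lfloor \lambda_\m(w)\rfloor$, so the vanishing of $0^{F\sharp\m^t}_{H^d_\m(R)}$ holds if and only if $\lambda_\m((q-1)u) \ge \lceil t(q-1)\rceil$ for all large $q$ and all $u\in\mathrm{Int}(\sigma^\vee)\cap M$. Since $\lambda_\m$ is positively homogeneous of degree $1$, dividing by $q-1$ and letting $q\to\infty$ shows this is equivalent to $t \le \lambda_\m(u)$ for all such $u$, i.e. $t \le \min\{\lambda_\m(u)\mid u\in\mathrm{Int}(\sigma^\vee)\cap M\} = -a_\sigma(R)$. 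Taking the supremum over admissible $t$ gives $c(\m) = -a_\sigma(R)$, which is part~(1).

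For part~(2), the anti-canonical cover of a normal affine semigroup ring is itself a finitely generated (in fact, a normal affine semigroup) algebra, so Theorem~\ref{thm:anti-canonical} applies: $\mathrm{fpt}(\m) = c(\m)$ if and only if $R$ is quasi-Gorenstein. Combining with part~(1), $\mathrm{fpt}(\m) = -a_\sigma(R)$ if and only if $\mathrm{fpt}(\m) = c(\m)$, hence if and only if $R$ is quasi-Gorenstein. It then remains to note that a normal semigroup ring is quasi-Gorenstein if and only if it is Gorenstein — this is standard (a Cohen–Macaulay quasi-Gorenstein ring is Gorenstein, and normal affine semigroup rings are Cohen–Macaulay by Hochster's theorem). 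This yields the stated equivalence.

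The main obstacle I anticipate is the careful bookkeeping in the combinatorial translation for part~(1): one must match up the grading conventions on $\omega_R$ versus $H^d_\m(R)$, track the ceilings and floors coming from $\lceil t(q-1)\rceil$, and verify that the Frobenius power $x^{(q-1)v}$ interacts with $P(\m)$ exactly through $\lambda_\m$ — in particular that working with $u\in\mathrm{Int}(\sigma^\vee)\cap M$ rather than all of $\mathrm{Int}(\sigma^\vee)$ does not change the infimum (the lattice points are dense enough in the relevant sense because $\lambda_\m$ is continuous and $P(\m)$ is a rational polyhedron, and one can always scale a real interior point to a nearby lattice point without decreasing $\lambda_\m$ too much, or simply use that the minimum defining $a_\sigma(R)$ is attained at a lattice point). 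A secondary point worth spelling out is why finite generation of the anti-canonical cover is automatic in the toric case, so that the hypothesis of Theorem~\ref{thm:anti-canonical} is genuinely satisfied here; this follows from Remark~\ref{rem:finite generation}, since $R$ is a normal semigroup ring. Everything else is a routine invocation of results established earlier in the paper.
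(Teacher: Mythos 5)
Your proposal is correct and follows essentially the same route as the paper: part (1) is the same monomial computation in $H^d_{\m}(R)$ reduced to the condition $-m\in\mathrm{Int}(tP(\m))$ via a Blickle-type convex-geometry argument, and part (2) is deduced from (1), Theorem~\ref{thm:anti-canonical} (with Noetherianity of the anti-canonical cover from Remark~\ref{rem:finite generation}), and Hochster's theorem. The only cosmetic difference is that you unwind $0^{F\sharp\m^t}_{H^d_{\m}(R)}$ directly, whereas the paper passes through the equivalent characterization $c(\m)=\sup\{t\mid (R,\m^t)\text{ is $F$-rational}\}$ of Lemma~\ref{lem:basic}~(2), using that $1$ is an $\m^t$-test element.
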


\begin{proof}
Since (2) follows from (1) and Theorem~\ref{thm:anti-canonical}, we will show only (1). 
Let $v_1, \dots, v_s$ be the primitive generators for $\sigma$, that is, the first lattice points on the edges of $\sigma$. 
Note that the graded canonical module $\omega_R$ consists of the monomials $x^m$ such that $\langle m, v_i \rangle \ge 1$ for all $i=1, \dots, s$. 
Hence, its $k$-dual $H^d_{\m}(R)$ is written as 
\[
H^d_{\m}(R)=\bigoplus_{m \in S} k x^m, 
\]
where $S=\{m \in M \mid \langle m, v_i \rangle \le -1 \textup{ for all $i=1, \dots, s$}\}$. 
It follows from the fact that $1$ is an $\m^t$-test element by \cite[Theorem~6.4]{HY} (see \cite[Definition~6.3]{HY} for the definition of $\m^t$-test elements) that the pair $(R, \m^t)$ is $F$-rational if and only if for each $m \in S$, one has $\m^{\lceil tp^e \rceil}x^{p^e m} \ne 0$ in $H^d_{\m}(R)$, or equivalently, 
\[
\left(p^e m+\lceil tp^e \rceil P(\m)\right) \cap S \ne \emptyset,
\]
for infinitely many $e$. 
We can rephrase this condition as saying that $-m \in \mathrm{Int}(tP(\m))$, using an argument similar to the proof of \cite[Theorem~3]{Bl}. 
By Lemma~\ref{lem:basic}~(2), 
\begin{align*}
c(\m)&=\sup\{t \in \R_{\ge 0} \mid (R, \m^t) \textup{ is $F$-rational}\}\\
&=\sup\{t \in \R_{\ge 0} \mid -m \in \mathrm{Int}(tP(\m)) \textup{ for all $m \in S$}\}\\
&=\min_{u \in -S}\lambda_{\m}(u).
\end{align*}
Since $-S=\mathrm{Int}(\sigma^{\vee}) \cap M$, one has the equality $c(\m)=-a_{\sigma}(R)$. 
\end{proof}

\section{Positive characteristic case II}
\label{sec:char p II}

In this section we give a different interpretation of the function $\nu_e(\m)$, where $\m$ is the homogeneous maximal ideal of an $F$-pure normal standard graded domain $R$ over an $F$-finite field (Theorem~\ref{theorem:omega}). Combining it with the Fedder-type criteria (Proposition~\ref{prop:Fedder criteria}), we give explicit computations of $\mathrm{fpt}(\m)$ in many situations (e.g. Propositions~\ref{prop:determinantal} and~\ref{prop:fptQGor}), eventually yielding Corollary~\ref{cor:conclusion I} as a consequence (see Corollary~\ref{cor:final}).

\begin{thm}
\label{theorem:omega}
Let $S$ be an $n$-dimensional standard graded polynomial ring over an $F$-finite field of characteristic $p>0$. 
Let $I$ be a homogeneous ideal such that $R:=S/I$ is an $F$-pure normal domain. 
Let $\omega_R$ denote the graded canonical module of $R$.
Then, for each $q=p^e$, one has a graded isomorphism
\[
\frac{I^{[q]}:_S I}{I^{[q]}} \cong \left(\omega_R(n)\right)^{(1-q)}.
\]
In particular, if $\m$ is the homogeneous maximal ideal of $R$, then $-\nu_e(\m)$ equals the degree of a minimal generator of $\omega_R^{(1-q)}$ (See Proposition~\ref{prop:Fedder criteria} for the definition of $\nu_e(\m)$). 
\end{thm}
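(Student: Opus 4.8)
The plan is to combine a Fedder-type computation with Grothendieck duality along the Frobenius. Write $\mathfrak{M}=(x_1,\dots,x_n)\subset S$ and fix $q=p^e$. Since $\omega_S\cong S(-n)$, Grothendieck duality for the finite flat map $F^e\colon S\to F^e_*S$ exhibits $\operatorname{Hom}_S(F^e_*S,S)$ as a free rank-one $F^e_*S$-module generated by the classical \emph{Fedder map} $\Phi^e\colon F^e_*S\to S$, which sends $F^e_*x^a$ to $x^b$ when $a=(q-1)\mathbf{1}+qb$ and to $0$ otherwise; it has degree $-n(q-1)/q$, and $J\cdot F^e_*S=F^e_*(J^{[q]})$ for every ideal $J\subset S$. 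The $S$-linearity of $\Phi^e$ gives the elementary lemma that, for $f\in S$, the $S$-linear map $\phi_f\colon F^e_*S\to S$ defined by $\phi_f(F^e_*s)=\Phi^e(F^e_*(fs))$ satisfies $\phi_f(F^e_*S)\subseteq J$ if and only if $f\in J^{[q]}$, and $\phi_f(F^e_*I)\subseteq I$ if and only if $f\in I^{[q]}:_SI$. Since $F^e_*S$ is $S$-free, $\operatorname{Hom}_S(F^e_*R,R)$ sits inside $\operatorname{Hom}_S(F^e_*S,R)\cong F^e_*(S/I^{[q]})\cdot\Phi^e$ as the set of maps killing $F^e_*I$, and by the lemma this is $F^e_*\big((I^{[q]}:_SI)/I^{[q]}\big)\cdot\Phi^e$; accounting for $\deg\Phi^e$, one gets a graded isomorphism of $F^e_*R$-modules
\[
\operatorname{Hom}_S(F^e_*R,R)\ \cong\ F^e_*\!\left(\frac{I^{[q]}:_SI}{I^{[q]}}\,\big(n(q-1)\big)\right).
\]

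Next I would identify the left-hand side with $F^e_*$ of a symbolic power of $\omega_R$. Since $I$ annihilates both modules, $\operatorname{Hom}_S(F^e_*R,R)=\operatorname{Hom}_R(F^e_*R,R)$, which is a reflexive $R$-module because $R$ is normal; hence it equals $H^0\big(U,\mathcal{H}om(F^e_*\mathcal{O}_U,\mathcal{O}_U)\big)$, where $U\subseteq X=\Spec R$ is the regular locus, whose complement has codimension $\ge 2$. On the smooth variety $U$ the Frobenius is finite flat, so Grothendieck duality gives $\mathcal{H}om(F^e_*\mathcal{O}_U,\omega_U)\cong F^e_*\omega_U$; twisting by the line bundle $\mathcal{O}_U(-K_X)$, using $F^{e*}\mathcal{O}_U(-K_X)\cong\mathcal{O}_U(-qK_X)$ and the projection formula, one obtains $\mathcal{H}om(F^e_*\mathcal{O}_U,\mathcal{O}_U)\cong F^e_*\mathcal{O}_U((1-q)K_X)$. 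Taking sections over $U$ and using $\omega_R^{(1-q)}=H^0(U,\mathcal{O}_U((1-q)K_X))$ yields a graded isomorphism $\operatorname{Hom}_R(F^e_*R,R)\cong F^e_*(\omega_R^{(1-q)})$, with no further shift, as one checks by specializing to $R=S$, where $\omega_S^{(1-q)}=S(n(q-1))$. Comparing with the displayed isomorphism gives $F^e_*\big(\tfrac{I^{[q]}:_SI}{I^{[q]}}(n(q-1))\big)\cong F^e_*(\omega_R^{(1-q)})$ as graded $F^e_*R$-modules; since the ring isomorphism $F^e_*R\cong R$ (rescaling degrees by $q$) makes $F^e_*({-})$ an equivalence of categories of graded modules, I may cancel $F^e_*$ to obtain $\tfrac{I^{[q]}:_SI}{I^{[q]}}(n(q-1))\cong\omega_R^{(1-q)}$, that is,
\[
\frac{I^{[q]}:_SI}{I^{[q]}}\ \cong\ \omega_R^{(1-q)}\big(n(1-q)\big)\ =\ \big(\omega_R(n)\big)^{(1-q)},
\]
which is the asserted isomorphism.

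For the final assertion, put $M=(I^{[q]}:_SI)/I^{[q]}$ and let $K=\big((I^{[q]}:_SI)\cap\mathfrak{M}^{[q]}\big)/I^{[q]}\subseteq M$, a graded $R$-submodule; $K$ is the kernel of the surjection of $M$ onto its image $N$ in $S/\mathfrak{M}^{[q]}$, which is Artinian Gorenstein with socle in degree $n(q-1)$. Using the perfect pairing $(S/\mathfrak{M}^{[q]})_d\times(S/\mathfrak{M}^{[q]})_{n(q-1)-d}\to k$, one sees that for a nonzero graded submodule of $S/\mathfrak{M}^{[q]}$ the largest $r$ with $\mathfrak{M}^rN\neq0$ equals $n(q-1)$ minus the least nonzero degree of $N$; hence $\nu_e(\m)=n(q-1)-\delta$, where $\delta=\min\{\,d:M_d\not\subseteq K_d\,\}=\min\{\,d:(I^{[q]}:_SI)_d\not\subseteq\mathfrak{M}^{[q]}\,\}$ (and $\nu_e(\m)$ is computed from $\ba=\mathfrak{M}$ in Proposition~\ref{prop:Fedder criteria}). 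Here $F$-purity enters through Fedder's criterion: it forces $I^{[q]}:_SI\not\subseteq\mathfrak{M}^{[q]}$, so $K\subsetneq M$ and $\delta\le n(q-1)$; indeed a homogeneous splitting of $R\to F^e_*R$ corresponds to an element of $M_{n(q-1)}$ not lying in $K$. Since $K$ is a submodule, $M_{<\delta}\subseteq K$ implies $(\mathfrak{M}M)_\delta\subseteq K_\delta$, hence $M_\delta\neq(\mathfrak{M}M)_\delta$ and $M$ has a minimal generator in degree $\delta$. Transporting this through $M\cong\omega_R^{(1-q)}(n(1-q))$, it becomes a minimal generator of $\omega_R^{(1-q)}$ in degree $\delta-n(q-1)=-\nu_e(\m)$, as claimed.

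The step I expect to be the main obstacle is the duality argument without a Cohen-Macaulay hypothesis: the passage to the regular locus, the reflexivity of $\operatorname{Hom}_R(F^e_*R,R)$, and the careful bookkeeping of gradings, in particular the legitimacy of cancelling $F^e_*$. The final assertion additionally hinges on the small but essential point that the critical degree $\delta$ is a generating degree of $M$, for which one genuinely uses $F$-purity rather than merely normality.
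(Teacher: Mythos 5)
Your proposal is correct, and its skeleton matches the paper's: both begin with the Fedder-type identification of $\Hom_S(F^e_*R,R)$ (the paper phrases it as $\Hom_{S^q}(S/I,S^q/J)$) with $F^e_*\bigl((I^{[q]}:_SI)/I^{[q]}\bigr)$ times the distinguished generator of $\Hom_S(F^e_*S,S)$, and both finish by matching generating degrees of $(I^{[q]}:_SI)/I^{[q]}$ against $\nu_e(\m)$ inside the Artinian Gorenstein ring $S/\mathfrak{M}^{[q]}$. Where you genuinely diverge is the middle identification $\Hom_R(F^e_*R,R)\cong F^e_*\omega_R^{(1-q)}$: the paper proves it by a purely module-theoretic chain over $R^q$ (inserting $R^q\cong\Hom_{R^q}(\omega_{R^q},\omega_{R^q})$ by normality, then tensor-hom adjunction and reflexive hulls), whereas you restrict to the regular locus $U$, use Grothendieck duality for Frobenius on the smooth locus plus the projection formula, and return via reflexivity of $\Hom_R(F^e_*R,R)$. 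Both are valid; the paper's chain has the advantage that every step is visibly a map of graded modules, so the degree normalization rides along automatically, while your route is geometrically transparent but leaves the shift to be pinned down. That is your one soft spot: ``no further shift, as one checks by specializing to $R=S$'' is not by itself a proof, since the isomorphism for general $R$ is not obtained from the case $R=S$ by any specialization; instead track degrees through the duality (e.g.\ note that the trace $F^e_*\omega_R\to\omega_R$ is a degree-zero graded map for the $\frac{1}{q}\Z$-grading on $F^e_*\omega_R$, or make the isomorphisms on $U$ equivariant for the $k^*$-action) --- this is routine and yields exactly the twist $n(1-q)$ you need. Your treatment of the last assertion, via the submodule $K=\bigl((I^{[q]}:_SI)\cap\mathfrak{M}^{[q]}\bigr)/I^{[q]}$, the perfect pairing on $S/\mathfrak{M}^{[q]}$ giving $\nu_e(\m)=n(q-1)-\delta$, and graded Nakayama showing $\delta$ is a generating degree, is actually more detailed than the paper's (which encodes the same duality in the colon identity $\mathfrak{M}^{[q]}:\mathfrak{M}^r=\mathfrak{M}^{[q]}+\mathfrak{M}^{n(q-1)+1-r}$), and it correctly isolates where $F$-purity enters. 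One last minor point: for an $F$-finite but imperfect field the explicit monomial description of $\Phi^e$ is not quite the generator (one must also dualize $k$ over $k^q$); the paper handles this by a flat base change to a perfect field, and you should either do the same or observe that the Fedder lemma holds for any $F^e_*S$-module generator of $\Hom_S(F^e_*S,S)$.
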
	

\begin{proof}
After taking a flat base change, we may assume that $S=k[x_1,\dots,x_n]$, where $k$ is a perfect field. It then follows that $S$ is a free $S^q$-module with basis $x_1^{i_1}\cdots x_n^{i_n}$ where $0\le i_j\le q-1$ for each $j$. Consider the homomorphism $\phi\in\Hom_{S^q}(S,S^q)$ that maps the basis element $(x_1\cdots x_n)^{q-1}$ to $1$, and every other basis element to $0$. It is readily seen that~$\phi$ generates $\Hom_{S^q}(S,S^q)$ as an $S$-module.

Let $J$ be the ideal of $S^q$ consisting of $q$-th powers of elements of $I$; note that $JS=I^{[q]}$. Then
\[
\Hom_{S^q}(S/I,S^q/J)\ \cong\ \frac{I^{[q]}:_S I}{I^{[q]}}\phi,
\]
see \cite[page~465]{Fedder}. Next, note that one has graded isomorphisms
\begin{align*}
\Hom_{S^q}(S/I,S^q/J) &\ \cong\ \Hom_{R^q}(R,R^q)\\
&\ \cong\ \Hom_{R^q}(R,\Hom_{R^q}(\omega_{R^q},\omega_{R^q}))\\
&\ \cong\ \Hom_{R^q}(R\otimes_{R^q}\omega_{R^q},\omega_{R^q})\\
&\ \cong\ \Hom_{R^q}(\omega_R^{(q)},\omega_{R^q})\\
&\ \cong\ \Hom_{R^q}(\omega_R^{(q)}\otimes_R R,\omega_{R^q})\\
&\ \cong\ \Hom_R(\omega_R^{(q)},\Hom_{R^q}(R,\omega_{R^q}))\\
&\ \cong\ \Hom_R(\omega_R^{(q)},\omega_R)\\
&\ \cong\ \omega_R^{(1-q)}.
\end{align*}
Since the homomorphism $\phi$ has degree $n-nq$, the desired isomorphism follows.

Suppose $\omega_R^{(1-q)}$ is generated in degrees $-d_1<\dots <-d_r$, then $(I^{[q]}:_SI)/I^{[q]}$ is generated in degrees $n(q-1)-d_1<\dots <n(q-1)-d_r$. Hence the least degree of a homogeneous element of $I^{[q]}:_SI$ that is not in $\m^{[q]}$ belongs to the set 
\[
\{n(q-1)-d_1,\ldots ,n(q-1)-d_r\},
\]
and $I^{[q]}:_SI\subseteq \m^{n(q-1)-d_1}$.
Since the definition of $\nu_e(\m)$ translates as
\[
\nu_e(\m) = \max\{r\in \N \mid (I^{[q]}:_S I) \not\subseteq \m^{[q]}+\m^{n(q-1)+1-r}\},
\]
it follows that $\nu_e(\m)\in \{d_1,\ldots ,d_r\}$. 
\end{proof}

As an immediate consequence we get:

\begin{cor}
If $R$ is an $F$-pure quasi-Gorenstein standard graded normal domain, over an $F$-finite field, with homogeneous maximal ideal $\m$, then
\[
\mathrm{fpt}(\m)=-a(R).
\]
In particular, by Proposition~\ref{prop:a_i}, in this case $a(R)\ge a_i(R)$ for all $i=0,\ldots, \dim R$.
\end{cor}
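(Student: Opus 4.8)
The plan is to combine Theorem~\ref{theorem:omega} with the quasi-Gorenstein hypothesis, using the Fedder-type criterion of Proposition~\ref{prop:Fedder criteria} to pass from the symbolic powers of $\omega_R$ to the $F$-pure threshold. First I would observe that since $R$ is quasi-Gorenstein, the canonical module $\omega_R$ is, up to a graded shift, isomorphic to $R$ itself; that is, $\omega_R \cong R(a)$ for some integer $a$, and $a = a(R)$ by the standard description of the $a$-invariant in terms of the graded canonical module (for a Cohen--Macaulay ring $\omega_R\cong R(a(R))$, and in the quasi-Gorenstein case one still has $\omega_R \cong R(a(R))$ as a rank-one reflexive module). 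Consequently every symbolic power is a free module: $\omega_R^{(1-q)} \cong R((1-q)a(R))$, which is generated by a single element in degree $-(1-q)a(R) = (q-1)a(R)$.

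Next I would feed this into the last sentence of Theorem~\ref{theorem:omega}: $-\nu_e(\m)$ equals the degree of a minimal generator of $\omega_R^{(1-q)}$. Since that module has a unique minimal generator, sitting in degree $(q-1)a(R)$, we get $-\nu_e(\m) = (q-1)a(R)$, i.e.
\[
\nu_e(\m) = -(q-1)\,a(R) = (p^e-1)\bigl(-a(R)\bigr)
\]
for every $e \ge 1$. Now I would apply Proposition~\ref{prop:Fedder criteria}(2), which says $\mathrm{fpt}(\m) = \lim_{e\to\infty} \nu_e(\m)/p^e$; plugging in the exact value of $\nu_e(\m)$ gives
\[
\mathrm{fpt}(\m) = \lim_{e\to\infty}\frac{(p^e-1)(-a(R))}{p^e} = -a(R).
\]
For the final clause, Proposition~\ref{prop:a_i} gives $\mathrm{fpt}(\m) \le -a_i(R)$ for every $i$ (here $R$ is $F$-pure, hence $F$-injective, so the hypothesis of that proposition is met); combining with $\mathrm{fpt}(\m) = -a(R)$ yields $-a(R) \le -a_i(R)$, i.e.\ $a(R) \ge a_i(R)$ for all $i = 0, \dots, \dim R$.

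The only genuine subtlety — and the step I would be most careful about — is the identification $\omega_R \cong R(a(R))$ as graded modules in the quasi-Gorenstein (not necessarily Cohen--Macaulay) setting, together with the bookkeeping of the grading shift through the isomorphism of Theorem~\ref{theorem:omega}. Concretely I must make sure that "quasi-Gorenstein" is being used in the sense that $\omega_R$ is isomorphic to $R$ as an ungraded (rank-one reflexive) module, so that after choosing the grading it becomes $R(a)$ for the integer $a$ that one then checks equals $a(R)$; the relevant sign conventions are exactly those already fixed in Theorem~\ref{theorem:omega} (where $\omega_R(n)$ appears), so this is a matter of consistency rather than new input. Everything else is a direct substitution into results proved earlier in the paper.
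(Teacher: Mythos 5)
Your argument is correct and is exactly the intended one: the paper presents this corollary as an ``immediate consequence'' of Theorem~\ref{theorem:omega}, the point being precisely that quasi-Gorensteinness makes $\omega_R^{(1-q)}\cong R((1-q)a(R))$ principal with generator in degree $(q-1)a(R)$, whence $\nu_e(\m)=(p^e-1)(-a(R))$ and Proposition~\ref{prop:Fedder criteria}(2) gives the limit. Your flagged subtlety (that the graded rank-one reflexive module $\omega_R$, once principal, is graded-isomorphic to $R(a(R))$ with the shift pinned down by $\delta(\omega_R)=-a(R)$) is handled correctly, and the final clause via Proposition~\ref{prop:a_i} matches the paper.
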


\begin{prop}
\label{prop:determinantal}
Let $R$ be the determinantal ring $k[T]/I$, where $k$ is an $F$-finite field, the matrix of indeterminates $T$ has size $m\times n$ with $m\le n$, and $I$ is the ideal generated by the size $t$ minors of~$T$ where $1\le t\le m$. Let $\m$ be the homogeneous maximal ideal of $R$. Then 
\[
\mathrm{fpt}(\m) = m(t-1).
\]
\end{prop}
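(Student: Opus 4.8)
\emph{Proof proposal.} The plan is to specialize Theorem~\ref{theorem:omega} to the determinantal ideal and then invoke the classical description of the canonical module of a generic determinantal ring. For a graded module $M$ over a standard graded ring, let $\mathrm{indeg}(M)$ denote the least degree of a nonzero homogeneous element of $M$. By Theorem~\ref{theorem:omega}, for $q=p^e$ the integer $-\nu_e(\m)$ is the degree of a minimal homogeneous generator of the graded module $\omega_R^{(1-q)}$; hence, once we check that $\omega_R^{(1-q)}$ is generated in a single degree, this reads $\nu_e(\m)=-\mathrm{indeg}(\omega_R^{(1-q)})$. Combined with $\mathrm{fpt}(\m)=\lim_{e\to\infty}\nu_e(\m)/p^e$ (Proposition~\ref{prop:Fedder criteria}), it therefore suffices to compute $\mathrm{indeg}(\omega_R^{(1-q)})$ for all $q=p^e$.

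Let $\p\subseteq R$ be the height-one prime generated by the size $t-1$ minors of the first $t-1$ rows of $T$. These minors are forms of degree $t-1$, so for every $k\ge 0$ the ordinary power $\p^{k}$ is generated in the single degree $k(t-1)$, and in particular $\mathrm{indeg}(\p^k)=k(t-1)$. By \cite[Corollary~7.10]{BV} the symbolic powers of $\p$ coincide with its ordinary powers, and by \cite[Theorem~8.8]{BV} the anti-canonical class of $R$ is the class of $\p^{(n-m)}$. Consequently, for every $k\ge 0$ the reflexive rank-one module $\omega_R^{(-k)}=\sO_X(-kK_X)$ lies in the divisor class of $\p^{(k(n-m))}=\p^{k(n-m)}$ and so is isomorphic, as a graded $R$-module and up to a degree shift, to $\p^{k(n-m)}$; in particular $\omega_R^{(-k)}$ is generated in a single degree. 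Taking $k=q-1$ shows that $\omega_R^{(1-q)}$ is generated in a single degree, as needed. Moreover $\omega_R^{(-k)}$ is the reflexive hull of $\bigl(\omega_R^{(-1)}\bigr)^{\otimes k}$, and — using \cite[Corollary~7.10]{BV} again, so that the reflexive hull of the $k$-th tensor power of $\p^{n-m}$ is $\p^{k(n-m)}$ — comparing initial degrees gives $\mathrm{indeg}(\omega_R^{(-k)})=k\cdot\mathrm{indeg}(\omega_R^{(-1)})$. Hence $\nu_e(\m)=-(p^e-1)\,\mathrm{indeg}(\omega_R^{(-1)})$ and
\[
\mathrm{fpt}(\m)=\lim_{e\to\infty}\frac{-(p^e-1)\,\mathrm{indeg}(\omega_R^{(-1)})}{p^e}=-\mathrm{indeg}(\omega_R^{(-1)}).
\]

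It remains to show $\mathrm{indeg}(\omega_R^{(-1)})=-m(t-1)$, i.e.\ to pin down the degree shift in the isomorphism $\omega_R^{(-1)}\cong\p^{n-m}(\ast)$. The robust way is to read this off from the explicit graded canonical module of a generic determinantal ring in \cite[Theorem~8.8]{BV}; equivalently, one uses the classical value $a(R)=-n(t-1)$ of its $a$-invariant, i.e.\ $\mathrm{indeg}(\omega_R)=n(t-1)$, together with $\mathrm{indeg}\,\Hom_R(\p^{n-m},R)=0$, and dualizes $\omega_R^{(-1)}\cong\p^{n-m}(\ast)$ to obtain $\ast=n(t-1)$. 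Either way one gets $\omega_R^{(-1)}\cong\p^{n-m}\bigl(n(t-1)\bigr)$, so that
\[
\mathrm{indeg}(\omega_R^{(-1)})=(n-m)(t-1)-n(t-1)=-m(t-1),
\]
and therefore $\mathrm{fpt}(\m)=m(t-1)$. (As a check: when $m=n$ the ring is Gorenstein, $\p^{0}=R$, and $\omega_R^{(-1)}=R(-a(R))$, recovering $\mathrm{fpt}(\m)=-a(R)=m(t-1)$.) I expect the main obstacle to be exactly this last step: the divisor-class and symbolic-power facts from \cite{BV} determine $\omega_R^{(-1)}$ only up to a grading twist, and locating that twist — getting the $a$-invariant normalization of the graded canonical module right — is where the content sits; everything before it is bookkeeping with Theorem~\ref{theorem:omega} and with reflexive powers of $\p$.
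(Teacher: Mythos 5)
Your proposal is correct and follows essentially the same route as the paper: apply Theorem~\ref{theorem:omega}, identify $\omega_R^{(1-q)}$ with a power of $\p$ (single generating degree) via \cite[Corollary~7.10]{BV}, and pin down the grading twist from the known graded canonical module, i.e.\ the $a$-invariant, of the determinantal ring. The only cosmetic difference is that the paper reads the twist directly off the graded isomorphism $\omega_R\cong\q^{n-m}(m-mt)$ of Bruns--Herzog \cite[Corollary~1.6]{BH:ainv} (with $\q$ generated by the $(t-1)$-minors of the first $t-1$ columns), which is the reference you actually want for this step, since \cite[Theorem~8.8]{BV} gives only the canonical class and your alternative normalization $\mathrm{indeg}\,\Hom_R(\p^{n-m},R)=0$ is itself a fact requiring the same explicit input.
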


\begin{rem}
In the notation of the proposition, the ring $R$ has $a$-invariant $-n(t-1)$. It follows that
$\mathrm{fpt}(\m)=-a(R)$ precisely when $m=n$ or $t=1$, i.e., if and only if $R$ is Gorenstein.

In the case $t=2$, the $F$-pure threshold has been calculated previously, see \cite[Corollary~1]{CM} or \cite[Example~6.2]{HWY}. Since $I$ is a homogeneous ideal of $k[T]$, which is $F$-pure, one can also ask for $\mathrm{fpt}(I)$. This threshold has been computed in \cite{MSV} (see \cite{HV} for various generalizations):
\[
\mathrm{fpt}(I)=\min\left\{\frac{(m-l)(n-l)}{t-l} \bigm| l=0,\ldots ,t-1\right\}.
\]
\end{rem}

\begin{proof}[Proof of Proposition~\ref{prop:determinantal}]
The graded canonical module of $R$ is computed in \cite[Corollary~1.6]{BH:ainv}, namely, it equals $\q^{n-m}(m-mt)$, where $\q$ is the prime ideal generated by the size $t-1$ minors of the first $t-1$ columns of the matrix $T$. The divisor class group of $R$ is described by \cite[Corollary~7.10]{BV}, from which it follows that $\omega_R^{(1-q)}$ is generated by elements of degree $-m(q-1)(t-1)$. Theorem~\ref{theorem:omega} now gives
\[
\nu_e(\m)=m(q-1)(t-1),
\]
from which the result follows.
\end{proof}

\begin{prop}
\label{prop:fptQGor}
Let $R$ be an $F$-pure $\Q$-Gorenstein standard graded normal domain, over an $F$-finite field, with homogeneous maximal ideal $\m$. If $c$ is the order of $\omega_R$ in the divisor class group and $\omega^{(c)}$ is generated in degree $D$, then:
\[
\mathrm{fpt}(\m)=D/c.
\]
In particular, $\mathrm{fpt}(\m)=-a(R)$ if and only if $R$ is quasi-Gorenstein.
\end{prop}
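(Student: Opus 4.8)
The plan is to compute $\nu_e(\m)$ for every $q=p^e$ by means of Theorem~\ref{theorem:omega}, and then take the limit via Proposition~\ref{prop:Fedder criteria}(2). Present $R$ as $S/I$ with $S$ a standard graded polynomial ring over the given field; then Theorem~\ref{theorem:omega} tells us that $-\nu_e(\m)$ is one of the degrees of the minimal generators of the graded $R$-module $\omega_R^{(1-q)}$. The hypothesis that $c$ is the order of $\omega_R$ in $\mathrm{Cl}(R)$ means precisely that the symbolic power $\omega_R^{(c)}$ is a free rank-one module; I would fix a realization of it as a graded fractional ideal $\omega_R^{(c)}=F R$, so that $F$ is homogeneous with $\deg F=D$ (this is what ``$\omega_R^{(c)}$ is generated in degree $D$'' says).

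Next I would carry out the following elementary bookkeeping. Given $q=p^e$, write $q-1=c\,n_e+j_e$ with $0\le j_e\le c-1$. Since $\omega_R^{(-cn_e)}=(\omega_R^{(c)})^{(-n_e)}=F^{-n_e}R$ is an \emph{invertible} fractional ideal, multiplication by it is exact and sends reflexive rank-one modules to reflexive rank-one modules, so
\[
\omega_R^{(1-q)} \;=\; \omega_R^{(-cn_e)}\cdot\omega_R^{(-j_e)}\;=\;F^{-n_e}\,\omega_R^{(-j_e)}\;\cong\;\omega_R^{(-j_e)}(n_e D).
\]
Hence the degrees of the minimal generators of $\omega_R^{(1-q)}$ are obtained from those of $\omega_R^{(-j_e)}$ by subtracting $n_e D$, and Theorem~\ref{theorem:omega} gives $\nu_e(\m)=n_e D-g_e$ for some $g_e$ in the \emph{finite} set $\bigcup_{0\le j\le c-1}\{\text{degrees of minimal generators of }\omega_R^{(-j)}\}$, which does not depend on $e$. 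In particular $|g_e|$ is bounded uniformly in $e$; since $n_e=(p^e-1-j_e)/c$ we get $n_e/p^e\to 1/c$, so $\nu_e(\m)/p^e\to D/c$, and Proposition~\ref{prop:Fedder criteria}(2) yields $\mathrm{fpt}(\m)=D/c$.

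It remains to deduce the ``in particular'' clause. If $R$ is quasi-Gorenstein, then $c=1$ and $\omega_R\cong R(a(R))$, whose generator sits in degree $-a(R)$; thus $D=-a(R)$ and $\mathrm{fpt}(\m)=D/c=-a(R)$ (this equality also follows from Proposition~\ref{prop:fit=-a} together with Lemma~\ref{lem:basic}(3)). Conversely, suppose $\mathrm{fpt}(\m)=-a(R)$, i.e.\ $D=-c\,a(R)$. By graded local duality the least degree in which $\omega_R$ is nonzero equals $-a(R)$, so I may pick $0\ne g\in[\omega_R]_{-a(R)}$; then $g^c\in\omega_R^{(c)}=FR$ is homogeneous of degree $-c\,a(R)=\deg F$, which forces $g^c$ to be a unit multiple of $F$, hence $\omega_R^{(c)}=g^c R$. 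Writing $\omega_R=\sO_X(K_X)$ as a fractional ideal (so $\omega_R^{(c)}=\sO_X(cK_X)$), this reads $\sO_X(cK_X)=\sO_X(-c\,\mathrm{div}(g))$ inside $\mathrm{Frac}(R)$; comparing orders of vanishing along height-one primes gives $cK_X=-c\,\mathrm{div}(g)$, hence $\omega_R=\sO_X(K_X)=gR$ is principal and $R$ is quasi-Gorenstein.

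The routine part is the degree bookkeeping in the second paragraph and the two appeals to already-established results. The point that needs genuine care is the reduction $\omega_R^{(1-q)}=F^{-n_e}\omega_R^{(-j_e)}$: it is exactly here that $\Q$-Gorensteinness (equivalently, the principality of $\omega_R^{(c)}$) is used, and one must be careful with the normalization of symbolic powers of fractional ideals and the resulting grading shifts. In the converse half of the ``in particular'' clause the only subtlety is the standard fact that two divisorial fractional ideals $\sO_X(A)$ and $\sO_X(B)$, regarded as submodules of the constant sheaf, coincide only if $A=B$ as Weil divisors.
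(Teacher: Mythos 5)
Your proposal is correct and follows essentially the same route as the paper: decompose $1-q$ modulo $c$, use principality of $\omega_R^{(c)}$ to write $\omega_R^{(1-q)}$ as a principal multiple of one of the finitely many modules $\omega_R^{(-j)}$, read off $\nu_e(\m)$ up to a bounded correction via Theorem~\ref{theorem:omega}, and pass to the limit with Proposition~\ref{prop:Fedder criteria}. Your treatment of the ``in particular'' clause (taking $g\in[\omega_R]_{-a(R)}$ and showing $g^c$ generates $\omega_R^{(c)}$, hence $\omega_R=gR$) just makes explicit the degree comparison that the paper states tersely via $\omega_R^{(c)}\supseteq\omega_R^c$, so it is the same argument in substance.
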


\begin{proof}
For $q=p^e$ let us write $1-q=a(q)c+b(q)$, with $0\le b(q)<c$. By the assumptions we have:
\[
\omega_R^{(1-q)}=\left(\omega^{(c)}\right)^{a(q)}\omega_R^{(b(q))}.
\]
In particular, $\omega_R^{(1-q)}$ is generated in degrees $d$ satisfying:
\[
a(q)D+A\le d\le a(q)D+B,
\]
where the minimal generators of $\omega_R^{(b(q))}$ have degrees between $A$ and $B$. Therefore
\[
-a(q)D-B\le \nu_e(\m)\le -a(q)D-A,
\]
and $\mathrm{fpt}(\m)=\lim_{q\to\infty}\nu_e(\m)/q=D/c$.

For the last part of the statement, simply notice that, if $\omega_R^{(c)}$ is principal but $\omega_R$ is not, then the generator of $\omega_R^{(c)}$ must have degree less than $-a(R)c$, since $\omega_R^{(c)}\subseteq \omega_R^c$.
\end{proof}

\begin{rem}
In the above notation, if $c=p$ notice that $a(q)=-q/p$ and $b(q)=1$. Furthermore $A$ can be chosen to be the negative of the $a$-invariant of $R$, so:
\[
\nu_e(\m)\le (q/p)D+a(R)=(q-1)D/p+D/p+a(R)=(q-1)\mathrm{fpt}(\m)+\mathrm{fpt}(\m)+a(R).
\]
\end{rem}

Given a finitely generated graded $R$-module $M$, we denote by $\delta(M)$ the least integer $d$ such that $M_d\neq 0$. In the case in which $R$ is a normal domain, the canonical module $\omega_R$ is isomorphic (as a graded module) to a divisorial ideal of $R$. 

\begin{lem}
\label{lem1}
If $R$ is a normal standard graded domain, and $\ba$ a graded divisorial ideal, then:
\begin{enumerate}[\quad\rm(1)]
\item $\delta(\ba^{(-1)})\ge -\delta(\ba)$.
\item $\delta(\ba^{(-1)}) = -\delta(\ba)$ if and only if $\ba$ is principal. 
\end{enumerate}
\end{lem}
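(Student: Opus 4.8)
The plan is to work directly with degrees of homogeneous elements, using the fact that for a graded divisorial ideal $\ba$ on a normal standard graded domain, multiplication gives a natural pairing $\ba \times \ba^{(-1)} \to \ba^{(0)} = R$ (the divisorial product), so that if $f \in \ba$ and $g \in \ba^{(-1)}$ are nonzero homogeneous elements, then $fg$ is a nonzero homogeneous element of $R$, hence $\deg(fg) = \deg f + \deg g \ge 0$ since $R$ is nonnegatively graded. Taking $f$ of degree $\delta(\ba)$ and $g$ of degree $\delta(\ba^{(-1)})$ yields $\delta(\ba) + \delta(\ba^{(-1)}) \ge 0$, which is part~(1). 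I would also record the companion pairing $\ba^{(n)} \times \ba^{(-n)} \to R$ for later use, but for the lemma itself only $n = 1$ is needed.

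For part~(2): if $\ba$ is principal, say $\ba \cong R(-D)$ as a graded module with $D = \delta(\ba)$, then $\ba^{(-1)} \cong R(D)$, so $\delta(\ba^{(-1)}) = -D = -\delta(\ba)$, giving one implication immediately. Conversely, suppose $\delta(\ba) + \delta(\ba^{(-1)}) = 0$; pick nonzero homogeneous $f \in \ba_{\delta(\ba)}$ and $g \in (\ba^{(-1)})_{\delta(\ba^{(-1)})}$. Then $fg \in R_0 = k$ is a nonzero scalar, so after rescaling $fg = 1$. I would then argue that $f$ generates $\ba$: the divisorial identity $\ba \cdot \ba^{(-1)} \supseteq fg R = R$ combined with $\ba \cdot \ba^{(-1)} \subseteq R$ forces $\ba \cdot \ba^{(-1)} = R$, so $\ba$ is an invertible (hence locally free of rank one) divisorial ideal; but a graded invertible module over a standard graded domain with $R_0$ a field is free, i.e., principal. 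Alternatively, and more elementarily, from $fg = 1$ one gets $R = fgR \subseteq f\,\ba^{(-1)} \subseteq (\ba \cdot \ba^{(-1)})^{(1)}$-type containments showing $f$ is a unit in the total quotient ring with $f^{-1} = g \in \ba^{(-1)}$, whence $\ba = (\ba^{(-1)})^{(-1)} \subseteq (gR)^{(-1)} = f R$, and since $f \in \ba$ we conclude $\ba = fR$.

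The main obstacle I anticipate is being careful about what "divisorial product" means and making sure the reflexive-hull bookkeeping is correct: the product $fg$ of elements from $\ba$ and $\ba^{(-1)}$ lands in the ordinary product ideal, whose reflexive hull is $R$, so a priori one only knows $fg \in R$ after reflexification — but since $fg$ is an honest element of the fraction field that lies in a rank-one reflexive module isomorphic to $R$, and $R$ is the full reflexive hull, this is automatic; I would state this cleanly using that $\ba^{(-1)} = \Hom_R(\ba, R)$ up to the canonical identification, so that $g \in \ba^{(-1)}$ literally acts on $\ba$ with values in $R$ and $g(f) = fg$. The other point requiring a line of care is the final step "graded invertible $\Rightarrow$ principal," which holds because $R$ is local in the graded sense (unique homogeneous maximal ideal) so graded projective modules are graded free; I would either cite this standard fact or give the one-line argument via $fg$ being a unit scalar as above, which sidesteps it entirely.
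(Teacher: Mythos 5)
Your proposal is correct and follows essentially the same route as the paper: in both, part (1) comes from the fact that the product of nonzero homogeneous elements of $\ba$ and $\ba^{(-1)}$ is a nonzero homogeneous element of $R$, and part (2) from observing that when the degrees sum to zero this product lands in $R_0$ and is therefore a unit, forcing the minimal-degree element to generate $\ba$. The only (harmless) difference is in the last step: the paper writes each generator $a_i=u^{-1}f_ia_1$ explicitly, while you conclude via reflexivity $\ba=(\ba^{(-1)})^{(-1)}\subseteq (gR)^{(-1)}=fR$, which is an equally valid finish.
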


\begin{proof}
Let $\ba=(a_1,\ldots ,a_r)$, where the $a_i$ are homogeneous elements of the quotient field of $R$ of degree $d_i\in\Z$, where $\delta(\ba)=d_1\le d_2\le \dots\le d_r$. If $b\in \ba^{(-1)}$ is a homogeneous nonzero element of degree $l$, then $l+d_1\ge 0$ since $a_1b$ is a homogeneous nonzero element of $R$. This shows (1).

Concerning point (2), if $b$ is a homogeneous nonzero element of $\ba^{(-1)}$ of degree $-d_1$, then $ba_1=u$ is a unit of $R$. Because $ba_i=f_i\in R$ for all $i=1,\ldots ,r$, we have $a_i=u^{-1}f_ia_1$ for all $i=1,\ldots ,r$, so that $a_1$ generates $\ba$ as an $R$-module. 
\end{proof}

\begin{prop}
\label{prop:bound}
Let $R$ be an $F$-pure standard graded normal domain, over an $F$-finite field, with homogeneous maximal ideal $\m$. Then:
\[
\nu_e(\m)\le a(R)(1-q) \ \ \ \forall \ q=p^e.
\]
In particular $\mathrm{fpt}(\m)\le -a(R)$. Further, if $R$ is not quasi-Gorenstein, then:
\[
\nu_e(\m)<a(R)(1-q) \ \ \ \forall \ q=p^e.
\]
\end{prop}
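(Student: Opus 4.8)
The plan is to combine Theorem~\ref{theorem:omega}, which identifies $\nu_e(\m)$ with the degree of a minimal generator of $\omega_R^{(1-q)}$, with the elementary degree estimates for divisorial ideals in Lemma~\ref{lem1}. By Theorem~\ref{theorem:omega}, $\nu_e(\m)$ is the degree $d_i$ of some minimal generator of $\omega_R^{(1-q)}$; since all such generators occur in degrees $\ge \delta(\omega_R^{(1-q)})=\delta((\omega_R^{(q-1)})^{(-1)})$, we certainly have $\nu_e(\m)\ge \delta((\omega_R^{(q-1)})^{(-1)})$, but what we want is the reverse-direction bound. So instead I would argue as follows: apply Lemma~\ref{lem1}~(1) with $\ba=\omega_R^{(q-1)}$ to get
\[
\delta\!\left(\omega_R^{(1-q)}\right)=\delta\!\left((\omega_R^{(q-1)})^{(-1)}\right)\ge -\delta\!\left(\omega_R^{(q-1)}\right).
\]
Here $\nu_e(\m)$, being the degree of \emph{a} minimal generator, is at least $\delta(\omega_R^{(1-q)})$; but in fact one checks from the proof of Theorem~\ref{theorem:omega} that $\nu_e(\m)$ equals $\delta(\omega_R^{(1-q)})$ (it is the \emph{smallest} $d_i$ that matters, because the largest degree of an element of $I^{[q]}:_SI$ outside $\m^{[q]}$ corresponds to the smallest generating degree of $\omega_R^{(1-q)}$). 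Thus $\nu_e(\m)=\delta(\omega_R^{(1-q)})$.

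Next I need an upper bound on $\delta(\omega_R^{(1-q)})$, equivalently a lower bound on $\delta(\omega_R^{(q-1)})$. Since $\omega_R^{(q-1)}\subseteq \omega_R^{q-1}$ (the symbolic power is contained in the ordinary power of the divisorial ideal), we get $\delta(\omega_R^{(q-1)})\ge (q-1)\,\delta(\omega_R)$. Now $\delta(\omega_R)=-a(R)$ for a standard graded ring, so $\delta(\omega_R^{(q-1)})\ge -(q-1)a(R)=a(R)(1-q)\cdot(-1)\cdot(-1)$; being careful with signs, $\delta(\omega_R)=-a(R)$ gives $\delta(\omega_R^{(q-1)})\ge (q-1)(-a(R))$, hence
\[
\nu_e(\m)=\delta\!\left(\omega_R^{(1-q)}\right)\ge -\delta\!\left(\omega_R^{(q-1)}\right)\ \text{and}\ \delta\!\left(\omega_R^{(1-q)}\right)\le \ ?
\]
—wait, I need the \emph{upper} bound on $\nu_e(\m)$, so let me instead bound $\delta(\omega_R^{(1-q)})$ from above directly: pick the generator of $\omega_R^{(q-1)}$ of least degree, call it $g$, of degree $(q-1)(-a(R))$ or larger; its inverse $1/g$ lies in $(\omega_R^{(q-1)})^{(-1)}=\omega_R^{(1-q)}$ and has degree $-\deg g\le (q-1)a(R)=a(R)(1-q)\cdot(-1)$. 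Cleanly: $\delta(\omega_R^{(1-q)})\le -(q-1)(-a(R))=(q-1)a(R)$, i.e. $\nu_e(\m)\le a(R)(1-q)$, which is the claimed inequality (note $a(R)\le 0$ since $R$ is $F$-pure, so $a(R)(1-q)\ge 0$). Dividing by $q$ and letting $e\to\infty$ gives $\mathrm{fpt}(\m)\le -a(R)$ by Proposition~\ref{prop:Fedder criteria}~(2).

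For the strict inequality when $R$ is not quasi-Gorenstein: if equality $\nu_e(\m)=a(R)(1-q)$ held for some $q$, then $\delta(\omega_R^{(1-q)})=(q-1)a(R)=-\delta(\omega_R^{q-1})$ would force, via the argument of Lemma~\ref{lem1}~(2) applied to $\omega_R^{q-1}$, that $\omega_R^{q-1}$ is principal; but also $\delta(\omega_R^{(q-1)})\ge\delta(\omega_R^{q-1})$ with the reverse containment giving $\delta(\omega_R^{(q-1)})=\delta(\omega_R^{q-1})$, and combined with $\omega_R^{(q-1)}$ principal (from the chain $\delta(\omega_R^{(1-q)})=-\delta(\omega_R^{(q-1)})$ plus Lemma~\ref{lem1}~(2)) this would make $\omega_R$ itself $\Q$-Gorenstein of index dividing $q-1$ — and then tracking that $\omega_R^{(q-1)}$ principal with generating degree exactly $(q-1)(-a(R))$ forces $\omega_R$ principal, i.e. $R$ quasi-Gorenstein, a contradiction. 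The main obstacle I anticipate is exactly this last step: making rigorous the passage from "$\omega_R^{(q-1)}$ is principal with the minimal possible generating degree" to "$\omega_R$ is principal," which requires comparing symbolic and ordinary powers carefully (the containment $\omega_R^{(n)}\subseteq\omega_R^{\,n}$ is generally strict) and using the degree bookkeeping — essentially the observation already recorded at the end of the proof of Proposition~\ref{prop:fptQGor} that a principal $\omega_R^{(c)}$ with $\omega_R$ non-principal must have generator of degree strictly less than $-a(R)c$.
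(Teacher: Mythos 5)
Your plan --- reduce everything to degree estimates on $\omega_R^{(1-q)}$ via Theorem~\ref{theorem:omega} and Lemma~\ref{lem1} --- is the same as the paper's, but as written the argument does not go through, and the problems are more than sign slips. (i) Theorem~\ref{theorem:omega} says that $-\nu_e(\m)$, not $\nu_e(\m)$, is the degree of a minimal generator of $\omega_R^{(1-q)}$, so what you get for free is the inequality $\nu_e(\m)\le -\delta\bigl(\omega_R^{(1-q)}\bigr)$; your claimed equality $\nu_e(\m)=\delta\bigl(\omega_R^{(1-q)}\bigr)$ has the wrong sign, and even after fixing the sign the equality is unjustified: the lowest-degree generator of $(I^{[q]}:_SI)/I^{[q]}$ may still lie in $\m^{[q]}$, which is exactly why the theorem only asserts $\nu_e(\m)\in\{d_1,\dots,d_r\}$. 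Fortunately only the inequality is needed. (ii) Your containment is backwards: the symbolic power is the reflexive hull of the ordinary power, so $\omega_R^{q-1}\subseteq \omega_R^{(q-1)}$; this gives the \emph{upper} bound $\delta\bigl(\omega_R^{(q-1)}\bigr)\le \delta\bigl(\omega_R^{q-1}\bigr)=(q-1)\delta(\omega_R)=a(R)(1-q)$, not the lower bound you state. (iii) The step ``$1/g$ lies in $(\omega_R^{(q-1)})^{(-1)}$'' for a least-degree generator $g$ is false in general: $1/g\in\ba^{(-1)}$ means $\ba\subseteq gR$, i.e.\ that $\ba$ is principal and generated by $g$, which is precisely the kind of conclusion at stake in this proposition, so it cannot be used as a free intermediate step. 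The correct chain is short: $\nu_e(\m)\le -\delta\bigl(\omega_R^{(1-q)}\bigr)\le \delta\bigl(\omega_R^{(q-1)}\bigr)\le a(R)(1-q)$, where the middle inequality is Lemma~\ref{lem1}~(1) applied to $\ba=\omega_R^{(q-1)}$ and the last one is (ii).

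For the strict inequality you explicitly leave the decisive step open, so that half is incomplete. The paper closes it as follows: if $\nu_e(\m)=a(R)(1-q)$ for some $q$, the chain above becomes a chain of equalities, so $\delta\bigl(\omega_R^{(1-q)}\bigr)=-\delta\bigl(\omega_R^{(q-1)}\bigr)$, and Lemma~\ref{lem1}~(2) (applied to $\omega_R^{(q-1)}$, whose inverse is $\omega_R^{(1-q)}$) shows that $\omega_R^{(q-1)}$ is principal, generated by some $g$ of degree $-a(R)(q-1)$. Since $\omega_R^{q-1}\subseteq\omega_R^{(q-1)}=Rg$ and $\omega_R^{q-1}$ has nonzero elements in that same minimal degree, the one-dimensional piece $R_0\,g$ is spanned by products of $q-1$ elements of $[\omega_R]_{-a(R)}$; comparing such products forces $\dim_{R_0}[\omega_R]_{-a(R)}=1$, so $\omega_R^{(q-1)}=\omega_R^{q-1}=Rf^{q-1}$ for some $f\in[\omega_R]_{-a(R)}$. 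Then every homogeneous $f'\in\omega_R$ satisfies $(f'/f)^{q-1}\in R$, and normality gives $f'/f\in R$; hence $\omega_R=Rf$ is principal, contradicting the hypothesis. Your appeal to the remark at the end of Proposition~\ref{prop:fptQGor} points at the right phenomenon, but that remark is not proved there in the form you need (its proof is essentially the argument just sketched), and your route to ``$\omega_R^{(q-1)}$ is principal'' rests on the erroneous identification of $\nu_e(\m)$ and the reversed containment from the first paragraph, so the contradiction is not yet established.
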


\begin{proof}
For the first part of the statement, notice that $\delta\left(\omega_R^{(q-1)}\right)\le \delta\left(\omega_R^{q-1}\right)=\delta(\omega_R)(q-1)=-a(R)(q-1)$, so $\delta\left(\omega_R^{(1-q)}\right)\ge a(R)(q-1)$ by Lemma~\ref{lem1}, and $\nu_e(\m)\le a(R)(1-q)$ by Theorem~\ref{theorem:omega}. 

For the second part, assume that $\nu_e(\m)=a(R)(1-q)$ for some $q=p^e$. Since $\delta\left(\omega_R^{(q-1)}\right)\le a(R)(1-q)$, by putting together Theorem~\ref{theorem:omega} and Lemma~\ref{lem1},
$\omega_R^{(q-1)}$ must be principal and generated in degree $-a(R)(q-1)$. Notice that $\omega_R^{(q-1)}\supseteq \omega_R^{q-1}$ and $\delta\left(\omega_R^{q-1}\right)=a(R)(1-q)$. Therefore the only possibility is that $\omega_R^{(q-1)} = \omega_R^{q-1}$, so that $\omega_R$ must be principal itself.
\end{proof}

\begin{prop}
Let $R$ be an $F$-pure standard graded normal domain, over an $F$-finite field, with homogeneous maximal ideal $\m$. If the anti-canonical cover $\bigoplus_{k\ge 0}\omega_R^{(-k)}$ of $R$ is Noetherian, then $\mathrm{fpt}(\m)=-a(R)$ if and only if $R$ is quasi-Gorenstein. 
\end{prop}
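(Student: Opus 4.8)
The plan is to deduce the proposition from Theorem~\ref{theorem:omega}, the finite-generation hypothesis, and Proposition~\ref{prop:fptQGor}. The ``only if'' implication holds in general (combine Proposition~\ref{prop:fit=-a}(2) with Lemma~\ref{lem:basic}(3), or use the corollary following Theorem~\ref{theorem:omega}), so I would concentrate on the converse: assuming $\mathrm{fpt}(\m)=-a(R)$, I must show that the divisorial ideal $\omega_R$ is principal, i.e.\ that $R$ is quasi-Gorenstein.

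First I would use that $\bigoplus_{k\ge 0}\omega_R^{(-k)}$ is a Noetherian $\N$-graded ring with degree-zero piece $R$, hence a finitely generated $R$-algebra, to produce an integer $d\ge 1$ whose $d$-th Veronese subalgebra is standard graded over $R$; equivalently, $\omega_R^{(-dk)}=\bigl(\omega_R^{(-d)}\bigr)^{k}$ for all $k\ge 0$. Write $A=\omega_R^{(-d)}$ and $e_0=\delta(A)$. For graded fractional ideals $I,J$ of $R$ that are finitely generated as $R$-modules one has $\delta(IJ)=\delta(I)+\delta(J)$, since $I$ and $J$ are generated over $R$ by homogeneous elements of degrees $\ge\delta(I)$, resp.\ $\ge\delta(J)$, while $R$ lies in nonnegative degrees; in particular $\delta(A^{k})=ke_0$ and $\delta(\omega_R^{k})=k\,\delta(\omega_R)=-k\,a(R)$.

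Next I would estimate $\delta\bigl(\omega_R^{(1-q)}\bigr)$ for $q=p^e$. Writing $q-1=dk+j$ with $0\le j<d$ and using the inclusions of divisorial ideals $\omega_R^{(-dk)}\omega_R^{(-j)}\subseteq\omega_R^{(-(dk+j))}$ and $\omega_R^{(-(dk+j))}\omega_R^{(-(d-j))}\subseteq\omega_R^{(-d(k+1))}=A^{k+1}$, together with the additivity of $\delta$, I obtain
\[
(k+1)e_0-C_1\ \le\ \delta\bigl(\omega_R^{(1-q)}\bigr)\ \le\ ke_0+C_2,
\]
where $C_1=\max_{0\le j<d}\delta\bigl(\omega_R^{(-(d-j))}\bigr)$ and $C_2=\max_{0\le j<d}\delta\bigl(\omega_R^{(-j)}\bigr)$ are constants independent of $q$. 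Since $k=q/d+O(1)$, this gives $\delta\bigl(\omega_R^{(1-q)}\bigr)=(e_0/d)\,q+O(1)$. By Theorem~\ref{theorem:omega}, $-\nu_e(\m)$ is the degree of a minimal generator of $\omega_R^{(1-q)}$, hence $\nu_e(\m)\le-\delta\bigl(\omega_R^{(1-q)}\bigr)$; dividing by $q$, letting $e\to\infty$, and invoking Proposition~\ref{prop:Fedder criteria}(2) yields $\mathrm{fpt}(\m)\le -e_0/d$. On the other hand, from $\omega_R^{d}\subseteq\omega_R^{(d)}$, Lemma~\ref{lem1}(1), and additivity of $\delta$,
\[
e_0=\delta\bigl((\omega_R^{(d)})^{(-1)}\bigr)\ \ge\ -\delta\bigl(\omega_R^{(d)}\bigr)\ \ge\ -\delta\bigl(\omega_R^{d}\bigr)=d\,a(R),
\]
so $-e_0/d\le -a(R)$ (this also reproves the bound of Proposition~\ref{prop:bound}).

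Finally, the hypothesis $\mathrm{fpt}(\m)=-a(R)$ forces equality throughout $-a(R)=\mathrm{fpt}(\m)\le -e_0/d\le -a(R)$, so $e_0=d\,a(R)$ and hence equality holds in the displayed chain; in particular $\delta\bigl((\omega_R^{(d)})^{(-1)}\bigr)=-\delta\bigl(\omega_R^{(d)}\bigr)$, and Lemma~\ref{lem1}(2) applied to $\ba=\omega_R^{(d)}$ shows that $\omega_R^{(d)}$ is principal. Thus $d\,[\omega_R]=0$ in the divisor class group, i.e.\ $R$ is $\Q$-Gorenstein; being an $F$-pure $\Q$-Gorenstein standard graded normal domain over an $F$-finite field with $\mathrm{fpt}(\m)=-a(R)$, Proposition~\ref{prop:fptQGor} then gives that $R$ is quasi-Gorenstein. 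The only step needing real care is the passage from finite generation of the anti-canonical cover to a standard-graded Veronese subalgebra and the ensuing linear control $\delta\bigl(\omega_R^{(1-q)}\bigr)=(e_0/d)q+O(1)$ valid for \emph{all} $q=p^e$, not merely for those with $d\mid q-1$; granting that, the problem collapses to the $\Q$-Gorenstein case already settled in Proposition~\ref{prop:fptQGor}.
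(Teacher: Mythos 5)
Your argument is correct, and it runs on the same engine as the paper's proof (Theorem~\ref{theorem:omega} plus Proposition~\ref{prop:Fedder criteria}, finite generation of the anti-canonical algebra giving a fixed period, and the degree-duality Lemma~\ref{lem1}), but the implementation differs in two ways worth noting. First, where the paper asserts the exact module decomposition $\omega_R^{(1-q)}=\bigl(\omega_R^{(-c)}\bigr)^{a(q)}\omega_R^{(-b(q))}$ for every $q$, you only use that some Veronese $\bigoplus_k\omega_R^{(-dk)}$ is standard graded together with the two sandwich inclusions $\omega_R^{(-dk)}\omega_R^{(-j)}\subseteq\omega_R^{(1-q)}$ and $\omega_R^{(1-q)}\omega_R^{(-(d-j))}\subseteq\bigl(\omega_R^{(-d)}\bigr)^{k+1}$; this is slightly more robust, since the exact decomposition for \emph{all} $q$ really requires a bounded shift (it is only the Veronese statement that holds on the nose), whereas your $\delta$-estimates $\delta\bigl(\omega_R^{(1-q)}\bigr)=(e_0/d)q+O(1)$ are unaffected and suffice for the limit. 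Second, your endgame is different: the paper pins down $\mathrm{fpt}(\m)=d_i/c$ for a generator degree $-d_i$ of $\omega_R^{(-c)}$ and then gets the strict inequality $d_i<-a(R)c$ directly from the argument of Proposition~\ref{prop:bound}, never establishing $\Q$-Gorensteinness; you instead show that equality $\mathrm{fpt}(\m)=-a(R)$ forces $\delta\bigl(\omega_R^{(-d)}\bigr)=-\delta\bigl(\omega_R^{(d)}\bigr)$, invoke Lemma~\ref{lem1}~(2) to conclude that $\omega_R^{(d)}$ is principal, and then finish by the already-settled $\Q$-Gorenstein case (Proposition~\ref{prop:fptQGor}). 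Both routes are valid; the paper's yields the extra information that $\mathrm{fpt}(\m)$ lies in the explicit set $\{d_i/c\}$ (relevant for rationality), while yours trades that for a cleaner handling of arbitrary $q$ and a reduction to the $\Q$-Gorenstein case. The ``only if'' direction you dispatch exactly as the paper does (the corollary to Theorem~\ref{theorem:omega}, or Proposition~\ref{prop:fit=-a} with Lemma~\ref{lem:basic}~(3)).
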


\begin{proof}
If the anti-canonical cover of $R$ is Noetherian, then there exists a positive integer $c$ such that, if we write $1-q=-a(q)c-b(q)$ with $a(q)$ positive and $0\le b(q)<c$:
\[
\omega_R^{(1-q)}=\left(\omega^{(-c)}\right)^{a(q)}\omega_R^{(-b(q))}.
\]
Let us say that $\omega^{(-c)}$ is generated in degrees $-d_1< \dots < -d_r$. Furthermore, let $-e_1< \dots < -e_s$ be the degrees of the minimal generators of $R,\omega^{(-1)},\ldots ,\omega^{(-c+1)}$. We have that
\[
\nu_e(\m)\in\left\{ \big(\sum_{i=1}^r n_i d_i\big) +e_j \ \bigg| \ n_i \in \Z_{\ge 0} \ \mbox{with} \ \sum_{i=1}^r n_i=a(q)
\ \mbox{and} \ j=1,\ldots ,s\right\}, 
\]
and in particular, $\nu_e(\m) \le a(q)d_1+e_1$ for every $q=p^e$. 
Therefore, 
\[\mathrm{fpt}(\m)=\lim_{q \to \infty} \frac{\nu_e(\m)}{q} \le \frac{d_1}{c}.\]
If $R$ is not quasi-Gorenstein, then it follows from the same argument of Proposition~\ref{prop:bound} that $-d_1>a(R)c$, which implies that $\mathrm{fpt}(\m)<-a(R)$. 
\end{proof}

\begin{rem}
When the assumptions of the proposition are satisfied and $R$ is strongly F-regular, $\mathrm{fpt}(\m)$ is known to be a rational number by \cite[Theorem~B]{CEMS}.
\end{rem}

\begin{prop}
Let $R$ be an $F$-pure standard graded normal domain, over an $F$-finite field, with homogeneous maximal ideal $\m$. If there exists a positive integer $c$ such that $\delta\left(\omega^{(c)}\right)<-a(R)c$, then $\mathrm{fpt}(\m)<-a(R)$. 
\end{prop}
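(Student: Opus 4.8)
The plan is to bound $\nu_e(\m)$ from above by an affine function of $q=p^e$ whose slope is strictly less than $-a(R)$, and then pass to the limit using Proposition~\ref{prop:Fedder criteria}(2). First I recall the two inputs already available in this section. By Theorem~\ref{theorem:omega}, $-\nu_e(\m)$ is the degree of a minimal generator of $\omega_R^{(1-q)}$, hence $\nu_e(\m)\le -\delta\bigl(\omega_R^{(1-q)}\bigr)$. By Lemma~\ref{lem1}(1) applied to $\ba=\omega_R^{(q-1)}$ (so that $\ba^{(-1)}=\omega_R^{(1-q)}$), we have $\delta\bigl(\omega_R^{(1-q)}\bigr)\ge -\delta\bigl(\omega_R^{(q-1)}\bigr)$. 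Combining these, $\nu_e(\m)\le \delta\bigl(\omega_R^{(q-1)}\bigr)$ for every $q=p^e$, exactly as in the proof of Proposition~\ref{prop:bound}. So it suffices to improve, using the hypothesis, the estimate $\delta\bigl(\omega_R^{(q-1)}\bigr)\le \delta\bigl(\omega_R^{q-1}\bigr)=-a(R)(q-1)$ used there.

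For this, write $q-1=kc+j$ with $k=\lfloor (q-1)/c\rfloor$ and $0\le j<c$. Viewing $\omega_R$ as a divisorial fractional ideal, multiplication of sections gives the containment of graded modules
\[
\bigl(\omega_R^{(c)}\bigr)^{k}\,\omega_R^{(j)}\ \subseteq\ \omega_R^{(kc+j)}=\omega_R^{(q-1)}.
\]
Since $\delta$ is order-reversing under inclusion of graded submodules, and since $\delta$ is subadditive on products ($\delta(MN)\le\delta(M)+\delta(N)$, in fact additive as $R$ is a domain), this yields $\delta\bigl(\omega_R^{(q-1)}\bigr)\le k\,\delta\bigl(\omega_R^{(c)}\bigr)+\delta\bigl(\omega_R^{(j)}\bigr)$. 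Now the hypothesis $\delta\bigl(\omega_R^{(c)}\bigr)<-a(R)c$ is a strict inequality between integers, so $\delta\bigl(\omega_R^{(c)}\bigr)\le -a(R)c-1$; and $\omega_R^{(j)}\supseteq\omega_R^{j}$ together with $\delta(\omega_R)=-a(R)$ gives $\delta\bigl(\omega_R^{(j)}\bigr)\le\delta\bigl(\omega_R^{j}\bigr)=-a(R)j$. Substituting,
\[
\delta\bigl(\omega_R^{(q-1)}\bigr)\ \le\ k\bigl(-a(R)c-1\bigr)-a(R)j\ =\ -a(R)(kc+j)-k\ =\ -a(R)(q-1)-k.
\]

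Putting this together with the first paragraph, $\nu_e(\m)\le -a(R)(q-1)-\lfloor (q-1)/c\rfloor$ for all $q=p^e$. Dividing by $q$ and letting $e\to\infty$, Proposition~\ref{prop:Fedder criteria}(2) gives
\[
\mathrm{fpt}(\m)\ =\ \lim_{e\to\infty}\frac{\nu_e(\m)}{p^e}\ \le\ -a(R)-\frac{1}{c}\ <\ -a(R),
\]
which is the assertion. The argument is a refinement of Proposition~\ref{prop:bound}, and the only points needing care are the divisorial-ideal containment $\bigl(\omega_R^{(c)}\bigr)^{k}\omega_R^{(j)}\subseteq\omega_R^{(q-1)}$ and the elementary behaviour of $\delta$ under products and inclusions of graded modules; there is no substantial obstacle, the key observation being that the strict inequality in the hypothesis is precisely what supplies the extra term $-\lfloor (q-1)/c\rfloor$, linear in $q$, that survives after dividing by $q$ and taking the limit.
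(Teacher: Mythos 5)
Your proof is correct and follows essentially the same route as the paper: bound $\nu_e(\m)\le\delta\bigl(\omega_R^{(q-1)}\bigr)$ via Theorem~\ref{theorem:omega} and Lemma~\ref{lem1}, write $q-1$ as a multiple of $c$ plus a bounded remainder, use the containment $\bigl(\omega_R^{(c)}\bigr)^{k}\omega_R^{(j)}\subseteq\omega_R^{(q-1)}$, and pass to the limit $\nu_e(\m)/q$. The only cosmetic difference is that the paper keeps the bound in the form $\mathrm{fpt}(\m)\le \delta\bigl(\omega_R^{(c)}\bigr)/c$ while you use the integrality of the hypothesis to land at $-a(R)-1/c$; both yield the strict inequality, and your handling of the remainder term $\delta\bigl(\omega_R^{(j)}\bigr)$ is if anything slightly more explicit than the paper's.
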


\begin{proof}
Let $\delta\left(\omega^{(c)}\right)=D$. With the same notation of the proof above $q-1=a(q)c+b(q)$, so:
\[
\delta(\omega_R^{(q-1)})\le a(q)D+\delta\left(\omega_R^{(b(q))}\right).
\]
Then, using Theorem~\ref{theorem:omega} together with Lemma~\ref{lem1}, for such $q$:
\[
\nu_e(\m)\le a(q)D.
\]
Thus, $\mathrm{fpt}(\m)=\lim_{q\to\infty}\nu_e(\m)/q\le D/c<-a(R)$.
\end{proof}

The following provides strong evidence for the conjecture of Hirose-Watanabe-Yoshida~\ref{conj:HWY} and, more generally, for the standard graded case of Conjecture~\ref{conj:fpt=fit}.

\begin{cor}
\label{cor:final}
Let $R$ be an $F$-pure standard graded normal domain, over an $F$-finite field, with homogeneous maximal ideal $\m$. Suppose that one of the following is satisfied: 
\begin{enumerate}[\quad\rm(1)]
\item The anti-canonical cover $\bigoplus_{k\ge 0}\omega_R^{(-k)}$ of $R$ is noetherian.
\item For some positive integer $c$, there is a nonzero element of $\omega_R^{(c)}$ of degree $<-a(R)c$. 
\end{enumerate}
Then $\mathrm{fpt}(\m)=-a(R)$ if and only if $R$ is quasi-Gorenstein.
\end{cor}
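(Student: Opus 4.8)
The plan is to deduce this corollary by assembling the results established earlier in this section, rather than proving anything new. The key inputs are: (i) the unconditional implication that an $F$-pure quasi-Gorenstein standard graded normal domain has $\mathrm{fpt}(\m)=-a(R)$ (the Corollary following Theorem~\ref{theorem:omega}); (ii) the Proposition above asserting that, when the anti-canonical cover $\bigoplus_{k\ge 0}\omega_R^{(-k)}$ is Noetherian, $\mathrm{fpt}(\m)=-a(R)$ holds precisely when $R$ is quasi-Gorenstein; and (iii) the Proposition above asserting that if $\delta(\omega_R^{(c)})<-a(R)c$ for some $c\ge 1$, then $\mathrm{fpt}(\m)<-a(R)$. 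Note also the general bound $\mathrm{fpt}(\m)\le -a(R)$ from Proposition~\ref{prop:bound}.

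By input (i), the ``if'' direction holds with no extra hypothesis, so it suffices to establish the ``only if'' direction under either of the hypotheses (1), (2). Under hypothesis (1) there is nothing further to do: input (ii) is exactly the claim. Under hypothesis (2) I would argue that the equivalence holds because both of its sides are false. Indeed, input (iii) gives $\mathrm{fpt}(\m)<-a(R)$, so the equality $\mathrm{fpt}(\m)=-a(R)$ never occurs; and if $R$ were quasi-Gorenstein then $\omega_R$ would be a principal divisorial ideal, so $\omega_R^{(c)}=\omega_R^{c}$ would have $\delta(\omega_R^{(c)})=-a(R)c$ and could contain no nonzero element of smaller degree, contradicting (2) --- hence $R$ is not quasi-Gorenstein. (One can also see the contradiction through input (i): quasi-Gorensteinness would force $\mathrm{fpt}(\m)=-a(R)$, incompatible with the strict inequality just derived.) Thus in case (2) the biconditional reads ``false $\iff$ false'' and is true.

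I do not expect a genuine obstacle here: all of the substantive work --- the identification in Theorem~\ref{theorem:omega} of $\nu_e(\m)$ with the degrees of minimal generators of $\omega_R^{(1-q)}$, the degree estimates of Lemma~\ref{lem1} and Proposition~\ref{prop:bound}, and the finite-generation argument underlying input (ii) --- has already been carried out in the preceding results. The only point requiring a little care is the bookkeeping in case (2): one must check that input (iii) yields a \emph{strict} inequality $\mathrm{fpt}(\m)<-a(R)$ (so that the equality in the statement is genuinely excluded), and record explicitly that (2) precludes quasi-Gorensteinness, so that the stated equivalence is not just vacuously phrased but internally consistent.
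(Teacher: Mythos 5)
Your proposal is correct and follows exactly the route the paper intends: the corollary is stated without proof precisely because case (1) is the preceding Proposition on Noetherian anti-canonical covers verbatim, and case (2) combines the Proposition giving $\mathrm{fpt}(\m)<-a(R)$ when $\delta\bigl(\omega_R^{(c)}\bigr)<-a(R)c$ with the observation that this hypothesis rules out quasi-Gorensteinness. Your explicit bookkeeping in case (2) — noting the strict inequality and that both sides of the biconditional fail — is exactly the intended (if unwritten) argument.
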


The following gives an extension of \cite[Proposition~3.4]{HWY}.

\begin{prop}
\label{prop:critical value}
Let $R$ be an $F$-pure standard graded normal domain, over an $F$-finite field of characteristic $p>0$, with homogeneous maximal ideal $\m$. Suppose there exists a positive integer $c$ not divisible by $p$ such that the minimal generators of $\omega_R^{(-c)}$ have equal degree, and $\omega_R^{(-ck)}=\big(\omega_R^{(-c)}\big)^k$ for each $k\in\N$. Then $\nu_e(\m)=(p^e-1)\mathrm{fpt}(\m)$ for infinitely many positive integers $e$. In particular, $(R, \m^{\mathrm{fpt}(\m)})$ is sharply $F$-pure. 
\end{prop}

\begin{proof}
Since $p$ does not divide $c$, there exists an infinite subset $A\subseteq \{p^e \mid e\in\N\}$ such that $q-1=a(q)c$ for all $q\in A$, with $a(q)\in\N$. For such $q$, 
\[
\omega_R^{(1-q)}=\left(\omega_R^{(-c)}\right)^{a(q)}.
\]
Therefore, if $\omega^{(-c)}$ is generated in degree $-d$, then $\nu_e(\m)=a(q)d$ for all $q \in A$, 
so that $\mathrm{fpt}(\m)=d/c$ and $\nu_e(\m)=(q-1)\mathrm{fpt}(\m)$ for all $q \in A$. 
\end{proof}

\section{Characteristic zero case}

Throughout this section, let $X$ be a normal variety over an algebraically closed field of characteristic zero and $\ba$ be a nonzero coherent ideal sheaf on $X$. 

We prove a characteristic zero analogue of Conjecture~\ref{conj:HWY}. 
First, we define a variant of multiplier submodules:

\begin{defn}
Let $\pi \colon Y \to X$ be a \textit{log resolution} of $(X, \ba)$, that is, $\pi$ is a proper birational morphism from a smooth variety $Y$ such that $\ba \sO_Y=\sO_Y(-F)$ is invertible and $\mathrm{Exc}(\pi) \cup \mathrm{Supp}(F)$ is a simple normal crossing divisor. 
Let $E$ be the reduced divisor supported on $\mathrm{Exc}(\pi)$. 
For a real number $t \ge 0$, the \textit{multiplier submodule} $\mathcal{J}(\omega_X, \ba^t)$ is defined by 
\[
\mathcal{J}(\omega_X, \ba^t):=\pi_*\omega_Y(\lceil -tF \rceil) \subseteq \omega_X.
\]
This submodule of $\omega_X$ is independent of the choice of $\pi$, see, for example, the proof of \cite[Proposition~3.4]{ST}. 
When $\ba=\sO_X$ or $t=0$, we simply denote $\mathcal{J}(\omega_X, \ba^t)$ by $\mathcal{J}(\omega_X)$. 

As a variant of $\mathcal{J}(\omega_X, \ba^t)$, we define the submodule $\mathcal{I}(\omega_X, \ba^t)$ of $\omega_X$ by
\[
\mathcal{I}(\omega_X, \ba^t):=\left\{\begin{array}{ll}\pi_*\omega_Y(\lceil \epsilon E-(t-\epsilon)F \rceil) & \textup{if $t>0$}\\
\pi_*\omega_Y(E) & \textup{if $t=0$}
\end{array}
\right.
\]
for sufficiently small $\epsilon>0$. 
It is easy to see that $\mathcal{I}(\omega_X, \ba^t)$ is independent of the choice of $\epsilon$ if $\epsilon>0$ is sufficiently small. 
When $\ba=\sO_X$ or $t=0$, we simply denote $\mathcal{I}(\omega_X, \ba^t)$ by $\mathcal{I}(\omega_X)$. 
\end{defn}

\begin{lem}
$\mathcal{I}(\omega_X, \ba^t)$ is independent of the choice of the resolution. 
\end{lem}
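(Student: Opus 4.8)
The plan is to use the standard domination argument: any two log resolutions of $(X,\ba)$ are dominated by a third, so it suffices to show the module is unchanged when one passes from a log resolution $\pi\colon Y\to X$ to a log resolution $\pi'\colon Y'\to X$ that factors as $\pi'=\pi\circ\mu$ through a proper birational morphism $\mu\colon Y'\to Y$. Before comparing, I would rewrite the definition in a cleaner form. With $F$ the divisor defined by $\ba\sO_Y=\sO_Y(-F)$, let $G$ be the \emph{reduced} divisor supported on $\mathrm{Exc}(\pi)\cup\mathrm{Supp}(F)$; this is simple normal crossing by the log resolution hypothesis. A coefficient-by-coefficient check shows that for all sufficiently small $\epsilon>0$,
\[
\lceil \epsilon E-(t-\epsilon)F\rceil \;=\; G-\lceil tF\rceil ,
\]
since on a component $\Gamma$ of $G$ the coefficient of $\epsilon E-(t-\epsilon)F$ equals $\epsilon\cdot(\text{a positive integer})-t\cdot\mathrm{ord}_\Gamma(F)$, whose round-up is $1-\lceil t\,\mathrm{ord}_\Gamma(F)\rceil$ for small $\epsilon$, while off $\mathrm{Supp}(G)$ that coefficient is $0$. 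Thus $\mathcal{I}(\omega_X,\ba^t)=\pi_*\bigl(\omega_Y(G-\lceil tF\rceil)\bigr)$, and $\omega_Y(G-\lceil tF\rceil)$ is a line bundle on the smooth variety $Y$.

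Next I would carry out the comparison under $\mu$. Put $F'=\mu^*F$ and let $G'$ be the reduced divisor supported on $\mathrm{Exc}(\pi')\cup\mathrm{Supp}(F')$. Using $\omega_{Y'}\cong\mu^*\omega_Y\otimes\sO_{Y'}(K_{Y'/Y})$ one obtains
\[
\omega_{Y'}\bigl(G'-\lceil tF'\rceil\bigr)\;\cong\;\mu^*\!\bigl(\omega_Y(G-\lceil tF\rceil)\bigr)\otimes\sO_{Y'}(N),\qquad N:=K_{Y'/Y}+G'-\mu^*G+\mu^*\lceil tF\rceil-\lceil t\mu^*F\rceil .
\]
The whole statement then reduces to the claim $\mu_*\sO_{Y'}(N)=\sO_Y$: granting it, applying $\mu_*$ (projection formula, $\omega_Y(G-\lceil tF\rceil)$ being locally free) and then $\pi_*$ yields $\pi'_*\omega_{Y'}(G'-\lceil tF'\rceil)=\pi_*\omega_Y(G-\lceil tF\rceil)$, i.e.\ the desired independence. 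For the claim I would establish: (a) $N$ is supported on the $\mu$-exceptional locus — on the strict transform of a prime divisor $P\subset Y$ one checks that $G'$ and $\mu^*G$ both carry the coefficient $G$ has at $P$, and $\mu^*\lceil tF\rceil$ and $\lceil t\mu^*F\rceil$ both carry the coefficient $\lceil tF\rceil$ has at $P$ (using that a non-$\mu$-exceptional prime divisor of $Y'$ is $\pi'$-exceptional iff its image in $Y$ is $\pi$-exceptional), so these four terms cancel in pairs off the $\mu$-exceptional locus, while $K_{Y'/Y}$ is $\mu$-exceptional; and (b) $N\ge0$. Then (a) gives $\mu_*\sO_{Y'}(N)\subseteq\sO_Y$ by normality of $Y$ (a rational function $f$ with $\operatorname{div}_{Y'}(f)+N\ge0$ is regular on $Y$ away from a closed set of codimension $\ge2$), while (b) gives $\mu_*\sO_{Y'}(N)\supseteq\mu_*\sO_{Y'}=\sO_Y$.

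I expect step (b) to be the main point. Work on a $\mu$-exceptional prime divisor $P'$. Its center on $Y$ has codimension $\ge2$, so $P'$ is itself $\pi'$-exceptional, whence its coefficient in $G'$ is $1$. Also $\mu^*\lceil tF\rceil$ is an integral divisor with $\mu^*\lceil tF\rceil\ge t\mu^*F$, so $\mu^*\lceil tF\rceil\ge\lceil t\mu^*F\rceil$ and the last two terms of $N$ contribute a nonnegative amount along $P'$. Finally — the key input — the pair $(Y,G)$ is log canonical ($Y$ smooth, $G$ reduced simple normal crossing), so its discrepancy along $P'$ is at least $-1$, i.e.\ the coefficient of $P'$ in $K_{Y'/Y}-\mu^*G$ is $\ge-1$. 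Adding the contribution $1$ from $G'$, the coefficient of $P'$ in $N$ is $\ge0$; hence $N\ge0$. The only genuine subtlety is bookkeeping: pinning down the exceptional-versus-strict-transform coefficient comparisons in (a), and being careful to apply log canonicity to the pair $(Y,G)$ — not to $(Y,0)$ — so that the $-1$ in the discrepancy bound is exactly absorbed by the coefficient $1$ of $P'$ in $G'$. The case $t=0$ is the specialization $F=0$, $G=E$ of the same argument.
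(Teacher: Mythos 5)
Your proposal is correct and follows essentially the same route as the paper: dominate two log resolutions by a third, compare along the induced morphism $\mu$ to the given resolution, use log canonicity of a simple normal crossing pair on the intermediate smooth variety to show the difference divisor is effective and exceptional, and conclude by pushing forward (projection formula). The only divergence is bookkeeping: you eliminate the $\epsilon$-perturbation up front via $\lceil \epsilon E-(t-\epsilon)F\rceil = G-\lceil tF\rceil$ and invoke log canonicity of the reduced pair $(Y,G)$ together with $\mu^*\lceil tF\rceil\ge\lceil t\mu^*F\rceil$, whereas the paper keeps the perturbations and applies log canonicity to the fractional pair $(Y,\sum_i a_iE_i)$ produced by the round-up.
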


\begin{proof}
Although it immediately follows from \cite[Lemma~13.3 and Corollary~13.7]{FST}, we give a more direct proof here. 

We consider the case where $t>0$; the case $t=0$ follows from a similar argument. 
Let $f \colon Y \to X$ be a log resolution of $(X, \ba)$ such that $\ba \sO_X=\sO_X(-F)$ is invertible, and let $E_Y$ be the reduced divisor supported on $\mathrm{Exc}(f)$. 
Let $g \colon Z \to Y$ be a log resolution of~$(Y, E_Y+F)$, and let $E_Z$ be the reduced divisor supported on $\mathrm{Exc}(g)$. 
Then it is enough to show that 
\[
\omega_Y(\lceil \epsilon E_Y-(t-\epsilon) F\rceil)=g_*\omega_Z(\lceil \epsilon'(g^{-1}_*E_Y+E_Z)-(t-\epsilon')g^*F \rceil)
\]
for sufficiently small real numbers $\epsilon, \epsilon'>0$, since two log resolutions of $(X, \ba)$ can be dominated by a third log resolution.
Let $\bigcup_i E_i$ be the irreducible decomposition of $\Supp (E_Y+F)$. 
For sufficiently small $\epsilon>0$, we can write 
\[
\lceil K_Y+ \epsilon E_Y-(t-\epsilon) F\rceil=K_Y- tF+\sum_{i}a_i E_i,
\]
where $1 \ge a_i>0$ for all $i$. 
Since $\sum_i E_i$ is a simple normal crossing divisor on $Y$, the pair $(Y, \sum_i a_i E_i)$ is log canonical. By the definition of log canonical pairs, we have
\begin{align*}
G:=& \lceil K_Z+\epsilon' (g^{-1}_*E_Y+E_Z)-(t-\epsilon')g^*F \rceil- g^*\lceil K_Y+ \epsilon E_Y-(t-\epsilon) F\rceil\\
= & \left\lceil K_{Z/Y}+\epsilon' \left(g^{-1}_*E_Y+E_Z+g^*F\right)-g^*\sum_i a_i E_i \right\rceil \ge 0.
\end{align*}
Note that $G$ is a $g$-exceptional divisor for sufficiently small $\epsilon'>0$. 
Therefore, 
\begin{align*}
g_*\omega_Z(\lceil \epsilon'(g^{-1}_*E_Y+E_Z)-(t-\epsilon')g^*F \rceil)
=& g_*\left(g^*\left(\omega_Y(\lceil \epsilon E_Y-(t-\epsilon) F\rceil\right)) \otimes \sO_Z(G) \right)\\
=& \omega_Y(\lceil \epsilon E_Y-(t-\epsilon) F\rceil) \otimes g_*\sO_Z(G)\\
=& \omega_Y(\lceil \epsilon E_Y-(t-\epsilon) F\rceil).\qedhere
\end{align*}
\end{proof}

\begin{rem}
\label{rem:rational, db}
\begin{enumerate}[\quad\rm(1)]
\item $X$ has only rational singularities if and only if $X$ is Cohen-Macaulay and $\mathcal{J}(\omega_X)=\omega_X$, see \cite[Theorem~5.10]{KM}.

\item If $X$ has only Du Bois singularities, then $\mathcal{I}(\omega_X)=\omega_X$, see \cite{KSS}.
In case $X$ is Cohen-Macaulay, the converse holds as well. 
The reader is referred to \cite{Sch1} for the definition and a simple characterization of Du Bois singularities. 
\end{enumerate}
\end{rem}

Using $\mathcal{I}(\omega_X, \ba^t)$, we define a new invariant of singularities in characteristic zero:

\begin{defn}
\label{def:db threshold}
Suppose that $X$ has only Du Bois singularities. 
Then the threshold $\mathrm{d}(\ba)$ is defined by 
\[
\mathrm{d}(\ba):=\sup\{t \ge 0 \mid \mathcal{I}(\omega_X, \ba^t)=\omega_{X}\}.
\]
If $x$ is a closed point of $X$, then the threshold $\mathrm{d}_x(\ba)$ is defined by 
\[
\mathrm{d}_x(\ba):=\sup\{t \ge 0 \mid \mathcal{I}(\omega_X, \ba^t)_x=\omega_{X,x}\}.
\]
\end{defn}

\begin{rem}
\label{rem:graded char0}
Let $R$ be an $\N$-graded ring with $R_0$ an algebraically closed field $k$ of characteristic zero, and let~$\m$ be the homogeneous maximal ideal of $R$. 
Let $X=\Spec R$ and $x \in X$ be the closed point corresponding to $\m$. 
By considering a $k^*$-equivariant log resolution of $(X, \m)$, we see that $\mathcal{I}(\omega_X, \m^t)$ is a graded submodule of the graded canonical module $\omega_R$. 
This implies that $\mathrm{d}(\m)=\mathrm{d}_x(\m)$. 
\end{rem}

In \cite{dFH} de~Fernex-Hacon extended the notion of log canonical thresholds to the non-$\Q$-Gorenstein setting; we recall their definition:

\begin{defn}[{\cite[Proposition~7.2]{dFH}}]
Suppose that $t \ge 0$ is a real number. 
\begin{enumerate}[\quad\rm(1)]
\item The pair $(X, \ba^t)$ is said to be \textit{klt} (resp. \textit{log canonical}) in the sense of de~Fernex-Hacon if there exists an effective $\Q$-divisor $\Delta$ on $X$ such that $K_X+\Delta$ is $\Q$-Cartier and $((X, \Delta); \ba^t)$ is klt (resp. log canonical) in the classical sense. 
That is, if $\pi \colon Y \to X$ is a log resolution of $(X, \Delta, \ba)$ such that $\ba\sO_Y=\sO_Y(-F)$ is invertible and if we write 
\[
K_Y-\pi^*(K_X+\Delta)-tF=\sum_{i}a_i E_i,
\]
where the $E_i$ are prime divisors on $Y$ and the $a_i$ are real numbers, then $a_i >-1$ (resp. $a_i \ge -1$) for all $i$. 
The \textit{log canonical threshold} $\mathrm{lct}(\ba)$ of $\ba$ is defined by 
\[
\hspace*{3em} \mathrm{lct}(\ba):=\sup\{t \ge 0 \mid (X, \ba^t) \textup{ is log canonical in the sense of de~Fernex-Hacon} \}.
\]

\item Let $x$ be a closed point of $X$. Then $(X, \ba^t)$ is \textit{klt} (resp. \textit{log canonical}) at $x$ in the sense of de~Fernex-Hacon if there exists a open neighborhood $U$ of $x$ such that $(U, (\ba|_U)^t)$ is klt (resp. log canonical) in the sense of de~Fernex-Hacon. 
If, in addition, $\ba=\sO_X$, then we say that $(X, x)$ is a \textit{log terminal} (resp. \textit{log canonical}) singularity in the sense of de~Fernex-Hacon. 
The \textit{log canonical threshold} $\mathrm{lct}_x(\ba)$ of $\ba$ at~$x$ is defined by 
\[
\hspace*{3.5em} \mathrm{lct}_x(\ba):=\sup\{t \ge 0 \mid (X, \ba^t) \textup{ is log canonical at $x$ in the sense of de~Fernex-Hacon} \}.
\]
If $\Delta$ is an effective $\Q$-divisor on $X$ such that $K_X+\Delta$ is $\Q$-Cartier and $(X, \Delta)$ is log canonical at $x$ (in the classical sense), then the \textit{log canonical threshold} $\mathrm{lct}_x(\Delta;\ba)$ is defined by 
\[
\hspace*{3.5em} \mathrm{lct}_x(\Delta; \ba):=\sup\{t \ge 0 \mid ((X, \Delta);\ba^t) \textup{ is log canonical at $x$ (in the classical sense)} \}.
\]
\end{enumerate}
\end{defn}

We prove some basic properties of $\mathrm{d}_x(\ba)$.

\begin{lem}
\label{lem:dbt basic}
Let $(X, x)$ be a $d$-dimensional normal singularity. 
\begin{enumerate}[\quad\rm(1)]
\item If $(X, x)$ is a rational singularity, then 
\[
\mathrm{d}_x(\ba)=\sup\{t \ge 0 \mid \mathcal{J}(\omega_X, \ba^t)_x=\omega_{X,x}\}.
\]

\item Suppose that $(X, x)$ is a log canonical singularity in the sense of de~Fernex-Hacon. Then $\mathrm{lct}_x(\ba) \le \mathrm{d}_x(\ba)$. In addition, if $X$ is quasi-Gorenstein at $x$, then $\mathrm{lct}_x(\ba) = \mathrm{d}_x(\ba)$.

\item Suppose that $(X,x)$ is a Cohen-Macaulay Du Bois singularity. 
Then $\mathrm{d}_x(\m_x) \le d$. 
If $J \subseteq \sO_{X, x}$ is a minimal reduction of the maximal ideal $\m_x$, then $\m_x^{d+1-\lceil \mathrm{d}_x(\m_x) \rceil} \subseteq J$. 

\item Suppose that $X$ is Cohen-Macaulay at $x$. 
If $\mathrm{d}_x(\m_x)>d-1$, then $X$ is nonsingular at $x$ and in particular $\mathrm{d}_x(\m_x)=d$. 
\end{enumerate}
\end{lem}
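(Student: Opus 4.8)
The plan is to transpose the proof of Lemma~\ref{lem:basic} to the birational setting, using the description of the submodule $\mathcal{I}(\omega_X,\ba^t)$ on a log resolution in place of the trace-map description of Remark~\ref{remark:hara-takagi}. Replace $X$ by $\Spec\sO_{X,x}$ and fix a log resolution $\pi\colon Y\to X$ of $(X,\ba)$ with $\ba\sO_Y=\sO_Y(-F)$ and reduced exceptional divisor $E$. Two containments are used throughout: from $\epsilon E\ge 0$ one has $\mathcal{I}(\omega_X,\ba^t)\supseteq\mathcal{J}(\omega_X,\ba^t)$, and from the elementary estimate $\lceil\epsilon E-(t-\epsilon)F\rceil\le E+\lceil -t'F\rceil$ (valid for $t'<t$ and $\epsilon>0$ small) one has $\mathcal{I}(\omega_X,\ba^t)\subseteq\pi_*\omega_Y(E+\lceil -t'F\rceil)$; since $E+\lceil -t'F\rceil\le E$, this last module is contained in $\pi_*\omega_Y(E)=\mathcal{I}(\omega_X)$.

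For (1): a rational singularity is Cohen-Macaulay with $\mathcal{J}(\omega_X)=\omega_X$, so the first containment gives $\mathrm{d}_x(\ba)\ge\sup\{t\mid\mathcal{J}(\omega_X,\ba^t)_x=\omega_{X,x}\}$. For the reverse inequality I would show that for a rational singularity the reduced exceptional divisor is absorbed: $\pi_*\omega_Y(E+\lceil -t'F\rceil)=\pi_*\omega_Y(\lceil -t'F\rceil)=\mathcal{J}(\omega_X,\ba^{t'})$. Here $-t'F$ is $\pi$-nef, so relative Kawamata-Viehweg vanishing gives $R^{>0}\pi_*\omega_Y(\lceil -t'F\rceil)=0$, and one then peels off the components of $E$ one at a time; the successive quotients inject into higher direct images of the dualizing sheaves of exceptional subvarieties, which vanish in the relevant degree precisely because $X$ is rational. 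Together with the second containment this yields $\mathcal{I}(\omega_X,\ba^t)\subseteq\mathcal{J}(\omega_X,\ba^{t'})$ for $t'<t$, and the two thresholds coincide.

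For (2): given $t<\mathrm{lct}_x(\ba)$, choose (by the de~Fernex-Hacon definition) an effective $\Q$-divisor $\Delta$ with $K_X+\Delta$ $\Q$-Cartier and $((X,\Delta);\ba^t)$ log canonical in the classical sense, and take a log resolution of $(X,\Delta,\ba)$; writing $K_Y-\pi^*(K_X+\Delta)-tF=\sum a_iE_i$ with all $a_i\ge -1$, a manipulation of round-ups shows that $\mathcal{I}(\omega_X,\ba^t)$ contains the adjoint submodule of $\omega_X$ attached to the pair $((X,\Delta);\ba^t)$ along the exceptional locus, and the latter equals $\omega_X$ at $x$ precisely because the pair is log canonical; hence $\mathrm{d}_x(\ba)\ge t$, and letting $t\uparrow\mathrm{lct}_x(\ba)$ gives $\mathrm{lct}_x(\ba)\le\mathrm{d}_x(\ba)$. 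When $X$ is quasi-Gorenstein at $x$ one may take $\Delta=0$; then $K_X$ is Cartier, the relative-canonical computation $K_Y=\pi^*K_X+K_{Y/X}$ expresses $\mathcal{I}(\omega_X,\ba^t)$ as $\omega_X\otimes\pi_*\sO_Y(K_{Y/X}+\lceil\epsilon E-(t-\epsilon)F\rceil)$, and one checks that the coefficient of this divisor on each exceptional $E_i$ through $\pi^{-1}(x)$ is nonnegative if and only if the corresponding $a_i\ge -1$; thus $\mathcal{I}(\omega_X,\ba^t)_x=\omega_{X,x}$ if and only if $(X,\ba^t)$ is log canonical at $x$, which gives the reverse inequality.

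For (3): take $\pi$ to factor through the blow-up of $\m_x$, so $\m_x\sO_Y=\sO_Y(-G)=J\sO_Y$ for a minimal reduction $J$ of $\m_x$ (which exists, the residue field being the algebraically closed ground field) and $G$ is $\pi$-exceptional. Put $N=d+1-\lceil\mathrm{d}_x(\m_x)\rceil$; for $t<\mathrm{d}_x(\m_x)$ close enough to it and $\epsilon>0$ small enough one has $\epsilon E-(t+N-\epsilon)G\le E-dG$, so multiplying $\omega_{X,x}=\mathcal{I}(\omega_X,\m_x^t)_x=\pi_*\omega_Y(\lceil\epsilon E-(t-\epsilon)G\rceil)_x$ by $\m_x^N$ and using the projection formula gives
\[
\m_x^N\omega_{X,x}\subseteq\pi_*\omega_Y(E-dG)=\pi_*\big(\omega_Y(E)\otimes J^d\sO_Y\big)=J\,\pi_*\big(\omega_Y(E)\otimes J^{d-1}\sO_Y\big)\subseteq J\,\pi_*\omega_Y(E),
\]
the middle equality being the local Skoda theorem for the $d$-generated ideal $J$; since $X$ is Du Bois at $x$, $\pi_*\omega_Y(E)=\mathcal{I}(\omega_X)$ has stalk $\omega_{X,x}$, so $\m_x^N\omega_{X,x}\subseteq J\omega_{X,x}$. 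Then $\m_x^N$ annihilates $\omega_{X,x}/J\omega_{X,x}=\omega_{\sO_{X,x}/J}$, a faithful $\sO_{X,x}/J$-module, whence $\m_x^N\subseteq J$; and if $\mathrm{d}_x(\m_x)>d$ then $N\le 0$ and this reads $\omega_{X,x}=J\omega_{X,x}$, impossible by Nakayama, so $\mathrm{d}_x(\m_x)\le d$. Finally, for (4): if $X$ is Cohen-Macaulay at $x$ and $\mathrm{d}_x(\m_x)>d-1$, then $\mathcal{I}(\omega_X)_x\supseteq\mathcal{I}(\omega_X,\m_x^t)_x=\omega_{X,x}$ for small $t>0$, so $X$ is Du Bois at $x$ by Remark~\ref{rem:rational, db}~(2); part~(3) then applies, and $d-1<\mathrm{d}_x(\m_x)\le d$ forces $\lceil\mathrm{d}_x(\m_x)\rceil=d$, so $\m_x=J$ and $X$ is regular at $x$, and a direct computation at a smooth point gives $\mathrm{d}_x(\m_x)=d$. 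The main obstacle is the absorption vanishing behind (1) (and the milder local Skoda step in (3)): turning ``rational singularity'' into $\pi_*\omega_Y(E+D)=\pi_*\omega_Y(D)$ requires compatible choices of resolutions and a careful use of relative Kawamata-Viehweg and Grauert-Riemenschneider vanishing; a secondary nuisance in (2) is that $K_X$ need not be $\Q$-Cartier, so one must carry the de~Fernex-Hacon pullback $\pi^*(K_X+\Delta)$ through all the push-forward computations.
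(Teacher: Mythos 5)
Your outlines of (2) and (4) do essentially match the paper (the paper proves (2) by adding the inequality $\mathrm{div}_Y(f)+\pi^*(K_X+\Delta)\ge 0$, valid for all $f\in\omega_X$ because $\omega_X^r\subseteq\sO_X(r(K_X+\Delta))$, to the log canonicity inequality, and identifies $\mathcal{I}(\omega_X,\ba^t)$ with the maximal non-lc ideal in the quasi-Gorenstein case; (4) is deduced from (3) exactly as you say). But the two steps you yourself flag as the crux are genuinely false as stated. In (1), the ``absorption'' $\pi_*\omega_Y(E+\lceil -t'F\rceil)=\pi_*\omega_Y(\lceil -t'F\rceil)$ fails even for $X$ smooth: take $X$ the local ring of a smooth surface at a point, $\ba=\m_x$, $\pi$ the blow-up with exceptional curve $E=F$, and $t'=5/2$; since $\omega_Y=\pi^*\omega_X(E)$, the left side is $\pi_*\pi^*\omega_X=\omega_X$ while the right side is $\pi_*\pi^*\omega_X(-E)=\m_x\omega_X$. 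The loss is exactly on the components of $E$ inside $\Supp F$, where your estimate $\lceil\epsilon E-(t-\epsilon)F\rceil\le E+\lceil -t'F\rceil$ gives away too much: for small $\epsilon$ one already has $\lceil\epsilon E-(t-\epsilon)F\rceil\le\lceil -t'F\rceil$ there, and the only components where the reduced divisor is needed are the exceptional ones \emph{not} contained in $\Supp F$. The paper absorbs those not by Kawamata--Viehweg vanishing and peeling, but by the elementary observation that rationality gives $\pi_*\omega_Y=\omega_X$, hence $K_Y+\mathrm{div}_Y(f)\ge 0$ for every $f\in\omega_X$, and then argues valuation by valuation on each $f$; your vanishing strategy is not carried out and cannot prove the equality in the form you state it.

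The ``local Skoda'' step in (3) is also false: the twist by the full reduced exceptional divisor destroys Skoda. Take $X=\Spec k[x,y,z]/(x^3+y^3+z^3)$ (cone over an elliptic curve, $d=2$, Cohen--Macaulay, Gorenstein, Du Bois), $\pi$ the blow-up of the vertex, so $Y$ is smooth, $E=G$, $K_{Y/X}=-E$, and $J=(y,z)$ a minimal reduction of $\m_x$. Then $\pi_*(\omega_Y(E)\otimes J^2\sO_Y)=\m_x^2\omega_X$ while $J\pi_*(\omega_Y(E)\otimes J\sO_Y)=J\m_x\omega_X$, and $\m_x^2\ne J\m_x$ here (e.g.\ $x^2\notin J\m_x$). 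Worse, the target of your relaxed containment, $\pi_*\omega_Y(E-dG)=\m_x^2\omega_X$, is not contained in $J\omega_X$ (as $x^2\notin(y,z)$), so no repair of the Skoda identity alone can rescue the chain: the relaxation $\epsilon E-(t+N-\epsilon)G\le E-dG$ reinstates the coefficient $1$ of $E$ along $G$, and that coefficient must be dropped. This is precisely why the paper factors $\pi$ through the blow-up, keeps only the exceptional components $E'$ not dominating components of the blow-up divisor, lands in $f_*\m_x^d\omega_Y$, and then invokes the Huneke--Trung/Hyry--Smith description of the core, $f_*\m_x^d\omega_Y:_{\sO_{X,x}}\omega_{X,x}=\mathrm{core}(\m_x)\subseteq J$, rather than any Skoda-type identity. (A smaller point: your choice ``$t<\mathrm{d}_x(\m_x)$ close to it'' does not cover $\mathrm{d}_x(\m_x)=0$, where one must take $t=\epsilon=0$ and use Du Bois directly.) Since your (4) relies on (3), it inherits this gap.
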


\begin{proof}
(1) Shrinking $X$ if necessary, we may assume that $X$ has only rational singularities. 
First we will check that $\mathrm{d}_x(\ba)>0$. 
Let $\pi \colon Y \to X$ be a log resolution of $(X, \ba)$ such that $\ba \sO_Y=\sO_Y(-F)$ is invertible and let $E$ be the reduced divisor supported on $\mathrm{Exc}(\pi)$. 
For sufficiently small $t>\epsilon>0$, 
\[
K_Y+E \ge \lceil K_Y+\epsilon E-(t-\epsilon)F \rceil \ge K_Y.
\]
Taking the pushforward $\pi_*$, we obtain inclusions $\omega_X \supset \mathcal{I}(\omega_X, \ba^t) \supset \mathcal{J}(\omega_X)=\omega_X$ by Remark~\ref{rem:rational, db}~(1). 
That is, $\mathrm{d}_x(\ba) \ge t>0$. 

Now we will show the assertion. Since $\mathcal{J}(\omega_X, \ba^t) \subseteq \mathcal{I}(\omega_X, \ba^t)$, the inequality 
\[
\mathrm{d}_x(\ba) \ge \sup\{t \ge 0 \mid \mathcal{J}(\omega_X, \ba^t)_x=\omega_{X,x}\}
\]
is obvious. 
We will prove the reverse inequality. 
It is enough to show that if $\mathcal{I}(\omega_X, \ba^t)_x=\omega_{X,x}$ with $t>0$, then $\mathcal{J}(\omega_X, \ba^{t-\epsilon})_x=\omega_{X,x}$ for all $t \ge \epsilon>0$. 
Fix a real number $t \ge \epsilon>0$. 
Shrinking $X$ again if necessary, we may assume that $X$ is affine and that $\mathcal{I}(\omega_X, \ba^t)=\omega_{X}$. 
This means that for sufficiently small $(1/2)\epsilon \ge \epsilon'>0$, 
\[
\mathrm{ord}_{E_i}(K_Y+\mathrm{div}_Y(f)+\epsilon' E-(t-\epsilon')F)>-1
\]
for every prime divisor $E_i$ on $Y$ and for every $f \in \omega_X$. 
If $E_i$ is an irreducible component of $\Supp F$, then 
\[
\mathrm{ord}_{E_i}(K_Y+\mathrm{div}_Y(f)-(t-\epsilon)F) \ge \mathrm{ord}_{E_i}(K_Y+\mathrm{div}_Y(f)+\epsilon'E-(t-\epsilon')F)>-1.
\]
On the other hand, since $X$ has only rational singularities, $K_Y+\mathrm{div}_Y(f) \ge 0$ by Remark~\ref{rem:rational, db}~(1). 
Therefore, if $E_i$ is not a component of $\Supp F$, then 
\[
\mathrm{ord}_{E_i}(K_Y+\mathrm{div}_Y(f)-(t-\epsilon)F)= \mathrm{ord}_{E_i}(K_Y+\mathrm{div}_Y(f)) \ge 0.
\]
Summing up above, we conclude that $\lceil K_Y+\mathrm{div}_Y(f)-(t-\epsilon)F \rceil \ge 0$ for every $f \in \omega_X$, that is, $\mathcal{J}(\omega_X, \ba^{t-\epsilon})=\omega_X$. 

(2) For the former assertion, it is enough to show that if $(X, \ba^t)$ is log canonical at $x$ in the sense of de~Fernex-Hacon, then $\mathcal{I}(\omega_X, \ba^t)_x=\omega_{X, x}$. 
Shrinking $X$ if necessary, we may assume that $X$ is affine and there exists an effective $\Q$-divisor $\Delta$ on $X$ such that $((X, \Delta);\ba^t)$ is log canonical with $K_X+\Delta$ $\Q$-Cartier of index $r$. 
Let $\pi \colon Y \to X$ be a log resolution of $(X, \Delta, \ba)$ such that $\ba \sO_Y=\sO_Y(-F)$ is invertible and let $E$ be the reduced divisor supported on $\mathrm{Exc}(\pi)$. 
By the definition of log canonical pairs, 
\[
\lceil K_Y-\pi^*(K_X+\Delta)+\epsilon_1 E-(t-\epsilon_2)F \rceil \ge 0
\]
for every $\epsilon_1>0$ and $t \ge \epsilon_2>0$ (when $t=0$, put $\epsilon_2=0$). Let $f \in \omega_X$. 
Since the fractional ideal $\omega_X^r$ is contained in $\sO_X(r(K_X+\Delta))$, one has $\mathrm{div}_Y(f)+\pi^*(K_X+\Delta) \ge 0$. 
It follows from these two inequalities that 
\[
\lceil K_Y+\epsilon_1 E-(t-\epsilon_2)F\rceil+\mathrm{div}_Y(f) \ge 0,
\]
which implies that $f \in \mathcal{I}(\omega_X, \ba^t)$. 

Now we will show the latter assertion. Shrinking $X$ again if necessary, we may assume that $\omega_X \cong \sO_X$. 
Then $\mathcal{I}(\omega_X, \ba^t)$ can be identified with the maximal non-lc ideal $\mathcal{J}'(X, \ba^t)$ under this isomorphism (see \cite[Definition~7.4]{FST} for the definition of $\mathcal{J}'(X, \ba^t)$). 
Since $\mathcal{J}'(X, \ba^t)=\sO_X$ if and only if $(X, \ba^t)$ is log canonical by the definition of $\mathcal{J}'(X, \ba^t)$, one has the equality that $\mathrm{lct}_x(\ba) = \mathrm{d}_x(\ba)$. 

(3) The proof is essentially the same as that of \cite[Theorem~5.2.5]{Sh}. 
Let $f \colon Y \to \Spec \sO_{X, x}$ be the blow-up at $\m_x$ with exceptional divisor $F_x$. 
Take a log resolution $\pi \colon \widetilde{X} \to \Spec \sO_{X, x}$ of $\m_x$. Then there exists a morphism $g \colon \widetilde{X} \to Y$ such that $\pi=f \circ g$. 
Let $E=\sum_{i=1}^s E_i$ be the reduced divisor supported on $\mathrm{Exc}(\pi)$. 
We may assume that $E_1, \dots, E_r$ are all the components of $E$ dominating an irreducible component of $F_x$, and put $E':=\sum_{i=r+1}^s E_i$. 
If $t>d$, then 
\begin{align*}
\mathcal{I}(\omega_{X}, \m_x^{t})_x &= \pi_*\sO_{\widetilde{X}}(\lceil K_{\widetilde{X}}+\epsilon E-(t-\epsilon) g^*F_x \rceil)\\
& \subseteq \pi_*\sO_{\widetilde{X}}( K_{\widetilde{X}}+E'-d g^*F_x )\\
& = f_*\left(g_*\sO_{\widetilde{X}}(K_{\widetilde{X}}+E')\otimes \sO_Y(-dF_x)\right)\\
& \subseteq f_*\omega_Y(-dF_x)\\
&=f_*\m_x^d \omega_Y
\end{align*}
for sufficiently small $\epsilon>0$. 
It follows from \cite[Theorem~3.7]{HuTr} and \cite[Lemma~5.1.6]{HS} that 
\[
\mathcal{I}(\omega_{X}, \m_x^{t})_x :_{\sO_{X, x}} \omega_{X,x} \subseteq f_*\m_x^d \omega_Y:_{\sO_{X, x}}\omega_{X, x}=\mathrm{core}(\m), 
\]
where $\mathrm{core}(\m)$ is the intersection of all reductions of $\m$. 
In particular, $\mathcal{I}(\omega_{X}, \m_x^{t})_x \subsetneq \omega_{X, x}$, that is, $\mathrm{d}_x(\m_x)<t$. 
Thus, $\mathrm{d}_x(\m_x) \le d$. 

Since $\mathcal{I}(\omega_{X}, \m_x^{\mathrm{d}_x(\m_x)-\epsilon})_x=\omega_{X, x}$ for every $\epsilon>0$ (we put $\epsilon=0$ when $\mathrm{d}_x(\m_x)=0$), 
by the same argument as above, 
\[
\m_x^{d+1-\lceil \mathrm{d}_x(\m_x) \rceil} \omega_{X, x} = \m_x^{d+1-\lceil \mathrm{d}_x(\m_x) \rceil} \mathcal{I}(\omega_{X}, \m_x^{\mathrm{d}_x(\m_x)-\epsilon})_x 
\subseteq f_*\m_x^d \omega_Y
\]
for sufficiently small $\epsilon>0$. 
It then follows from \cite[Theorem~3.7]{HuTr} and \cite[Lemma~5.1.6]{HS} again that 
\[
\m_x^{d+1-\lceil \mathrm{d}_x(\m_x) \rceil} \subseteq f_*\m_x^d \omega_Y:_{\sO_{X, x}}\omega_{X, x}=\mathrm{core}(\m) \subseteq J. 
\]

(4) Let $J$ be a minimal reduction of $\m_x$. 
It then follows from (3) that $\m_x=J$, which means that $\m_x$ is generated by at most $d$ elements, 
that is, $X$ is nonsingular at $x$. 
If $X$ is nonsingular at $x$, then by (2), we see that $\mathrm{d}_x(\m_x)=\mathrm{lct}_x(\m_x)=d$. 
\end{proof}

We can compute the log canonical threshold of the maximal ideal of an affine determinantal variety 
using $F$-pure thresholds:

\begin{prop}
Let $D:=\Spec k[T]/I$ be the affine determinantal variety over an algebraically closed field $k$ of characteristic zero, where $T$ is an $m \times n$ matrix of indeterminates with $m \le n$, and $I$ is the ideal generated by the size $t$ minors of~$T$ where $1\le t\le m$.
Let $\m$ be the homogeneous maximal ideal of $k[T]/I$, corresponding to the origin $0$ in $D$. 
Then
\[
\mathrm{lct}(\m) = m(t-1).
\]
\end{prop}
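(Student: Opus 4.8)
The plan is to reduce the computation to positive characteristic. Let $R_{\Z}=\Z[T]/I_{\Z}$ be the size-$t$ determinantal ring over $\Z$, with homogeneous maximal ideal $\m_{\Z}$, and for a prime $p$ let $R_p$, $\m_p$ denote its reductions modulo $p$. Since $I$ is generated by minors (a characteristic-free condition) and $R_{\Z}$ is $\Z$-flat, each $R_p$ is again the size-$t$ determinantal ring over $\F_p$: it is normal, Cohen--Macaulay, strongly $F$-regular (hence $F$-pure) for every $p$, and its graded canonical module is $\q^{n-m}$ up to a degree shift. Thus Proposition~\ref{prop:determinantal} applies to each $R_p$ and gives $\mathrm{fpt}(\m_p)=m(t-1)$ for all primes $p$. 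For the inequality $\mathrm{lct}(\m)\ge m(t-1)$ I would then invoke the implication that $F$-pure type forces de~Fernex--Hacon log canonicity: fixing $\epsilon>0$, the pair $(R_p,\m_p^{m(t-1)-\epsilon})$ is sharply $F$-pure for every $p$, so by the reduction-modulo-$p$ comparison of \cite{dFH} (together with \cite[Proposition~7.2]{dFH}) the pair $(X,\m^{m(t-1)-\epsilon})$ is log canonical in the sense of de~Fernex--Hacon, where $X:=\Spec R$; letting $\epsilon\to 0$ yields $\mathrm{lct}(\m)\ge m(t-1)$.

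The reverse inequality $\mathrm{lct}(\m)\le m(t-1)$ must be argued directly in characteristic zero, since the implication ``log canonical implies $F$-pure type'' is not available in this generality. If $R$ is Gorenstein (equivalently $m=n$, or trivially when $t=1$), then the anti-canonical cover is Noetherian and Theorem~B gives $\mathrm{lct}(\m)=-a(R)=n(t-1)=m(t-1)$. So assume $m<n$ and $t\ge 2$; then $\mathrm{Cl}(X)\cong\Z$ and $-K_X\sim(n-m)[\p]$, where $\p$ is the prime ideal generated by the size $t-1$ minors of the first $t-1$ rows of $T$. By Remark~\ref{rem:finite generation} the anti-canonical cover is the Noetherian Rees algebra of $\p^{n-m}$, so the de~Fernex--Hacon threshold is realized by a single $\Q$-Cartier boundary (cf.~\cite{dFH}): for $N$ sufficiently divisible and $f\in\p^{N(n-m)}$ general, the divisor $\Delta:=\frac{1}{N}\operatorname{div}(f)$ satisfies $K_X+\Delta\sim_{\Q}0$ and $\mathrm{lct}(\m)=\mathrm{lct}((X,\Delta);\m)$. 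On a fixed log resolution $\pi\colon Y\to X$ of $(X,\p\m)$ (for instance a Kempf desingularization of the determinantal variety), the classical formula $\mathrm{lct}((X,\Delta);\m)=\min_E\frac{1+a_E(X,\Delta)}{\operatorname{ord}_E(\m\sO_Y)}$ reduces matters to an explicit discrepancy computation in which $\operatorname{ord}_E(\Delta)$ is controlled by the degree $(n-m)(t-1)$ of the minimal generators of $\p^{n-m}$; tracking this through $\pi$ forces the minimum to equal $m(t-1)$. Combined with the previous paragraph, this gives $\mathrm{lct}(\m)=m(t-1)$.

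The main obstacle is the upper bound: it genuinely cannot be obtained from reduction modulo $p$, and the delicate point in the direct argument is to pin down the divisor $E$ over $X$ that computes $\mathrm{lct}((X,\Delta);\m)$ and to check that, once the boundary $\Delta$ forced by non-$\Q$-Gorensteinness is taken into account, the log discrepancy ratio is exactly $m(t-1)$ and not something strictly smaller. A possible alternative for this step, which would turn the whole argument into a clean reduction modulo $p$, is to verify ``log canonical $\Rightarrow$ $F$-pure type'' in the case at hand, using that the Kempf resolution is a tower of Grassmannian bundles and so should have ordinary reduction modulo $p$ for infinitely many $p$; establishing this is itself not routine.
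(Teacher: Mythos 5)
Your computation of $\mathrm{fpt}(\m_p)=m(t-1)$ for every $p$ matches the paper (Proposition~\ref{prop:determinantal}), but the rest of the argument has a genuine gap, precisely at the point you flag yourself: the upper bound $\mathrm{lct}(\m)\le m(t-1)$. What you offer there is a plan, not a proof. You assert that the de~Fernex--Hacon threshold is computed by a single boundary $\Delta=\frac{1}{N}\mathrm{div}(f)$ with $f$ general in $\p^{(N(n-m))}$, and that on a Kempf resolution ``tracking this through $\pi$ forces the minimum to equal $m(t-1)$''; neither the reduction to that particular $\Delta$ nor the discrepancy/order computation along the relevant exceptional divisors is carried out, and this is exactly the delicate content of the inequality. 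Moreover, your claim that this direction ``genuinely cannot be obtained from reduction modulo $p$'' is mistaken: the paper's proof is a two-line reduction mod $p$. Since the anti-canonical cover of a determinantal ring is Noetherian (Remark~\ref{rem:finite generation}), \cite[Theorem~6.4]{CEMS} gives $\mathrm{lct}(\m)=\lim_{p\to\infty}\mathrm{fpt}(\m_p)$ in both directions at once, and Proposition~\ref{prop:determinantal} finishes the computation.

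There is also a citation problem in your lower bound. The implication ``dense sharply $F$-pure type $\Rightarrow$ log canonical in the sense of de~Fernex--Hacon'' is not in \cite{dFH}, which is a purely characteristic-zero paper, and \cite[Proposition~7.2]{dFH} is only the definition of the pairs; in the non-$\Q$-Gorenstein setting the classical Hara--Watanabe argument needs a characteristic-zero boundary $\Delta$ with $K_X+\Delta$ $\Q$-Cartier, and producing one from the $p$-dependent boundaries coming from $F$-splittings is exactly where finite generation of the anti-canonical algebra enters. So even the easy inequality should be routed through \cite{CEMS} (or through an explicit boundary built from $\p^{(n-m)}$), at which point the same theorem already yields the equality and your unfinished resolution computation becomes unnecessary.
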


\begin{proof}
For each prime integer $p$, let $R_p:=\F_p[T]/I_{p}$ be the modulo $p$ reduction of $k[T]/I$, and $\m_p$ the homogeneous maximal ideal of $R_p$. 
It then follows from \cite[Theorem~6.4]{CEMS} and Proposition~\ref{prop:determinantal} that 
\[
\mathrm{lct}(\m)=\lim_{p \to \infty} \mathrm{fpt}(\m_p)=m(t-1).\qedhere
\]
\end{proof}

\begin{prop}
\label{prop:non-Gorenstein2}
Let $x$ be a closed point of $X$ and $\Delta$ be an effective $\Q$-divisor on $X$ such that $(X, \Delta)$ is log canonical at $x$ with $K_X+\Delta$ being $\Q$-Cartier of index $r$. 
If $X$ is not quasi-Gorenstein at $x$, then 
\[
\mathrm{lct}_x(\Delta; \m_x)+\frac{1}{r} \le \mathrm{d}_x(\m_x).
\]
In particular, if $\mathrm{d}_x(\m_x)=0$, then $X$ is quasi-Gorenstein at $x$. 
\end{prop}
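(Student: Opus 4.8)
The plan is to adapt the proof of Lemma~\ref{lem:dbt basic}~(2) so as to gain the extra summand $1/r$, exactly as the proof of Theorem~\ref{thm:non-Gorenstein} does in positive characteristic; concretely, the non-quasi-Gorenstein hypothesis will upgrade the inclusion $\mathrm{div}_Y(f)+\pi^*(K_X+\Delta)\ge 0$ used there to $\mathrm{div}_Y(f)+\pi^*(K_X+\Delta)\ge\frac1r F$, and the surplus $\frac1r F$ will absorb an extra $\frac1r F$ in the $-tF$ term. The ``in particular'' is then immediate: if $\mathrm{d}_x(\m_x)=0$ and $X$ were not quasi-Gorenstein at $x$, the displayed inequality would force $\mathrm{lct}_x(\Delta;\m_x)+1/r\le 0$, absurd since $\mathrm{lct}_x(\Delta;\m_x)\ge 0$. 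So it suffices to prove $\mathrm{lct}_x(\Delta;\m_x)+1/r\le\mathrm{d}_x(\m_x)$ under the assumption that $X$ is not quasi-Gorenstein at $x$. Since $(X,\Delta)$ is log canonical with $K_X+\Delta$ $\Q$-Cartier, $X$ has Du Bois singularities near $x$, so $\mathrm{d}_x(\m_x)$ is defined, $\mathcal{I}(\omega_X)=\omega_X$ near $x$, and hence $\mathcal{I}(\omega_X,\m_x^t)$ is a submodule of $\omega_X$ near $x$ for every $t>0$. Shrinking $X$, I would assume it is affine, that $(X,\Delta)$ is log canonical and $K_X+\Delta$ is $\Q$-Cartier of index $r$, and that either $\Delta=0$ on $X$ or $x\in\mathrm{Supp}(\Delta)$.

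Write $t_0:=\mathrm{lct}_x(\Delta;\m_x)$, fix small $\epsilon,\epsilon'>0$ with $\epsilon=0$ in case $t_0=0$, and set $t:=t_0+1/r-\epsilon-\epsilon'>0$ (we let $\epsilon,\epsilon'\to 0$ at the end). Choose a log resolution $\pi\colon Y\to X$ of $(X,\Delta,\m_x)$ with $\m_x\sO_Y=\sO_Y(-F)$ invertible, and let $E$ be the reduced exceptional divisor. It is enough to show $\mathcal{I}(\omega_X,\m_x^t)_x=\omega_{X,x}$, i.e.\ that every $f\in\omega_{X,x}$ satisfies $\lceil K_Y+\mathrm{div}_Y(f)+\epsilon_0 E-(t-\epsilon_0)F\rceil\ge 0$ over $\pi^{-1}(x)$ for sufficiently small $\epsilon_0>0$. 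I would use two inputs. First, since $((X,\Delta);\m_x^{t_0-\epsilon})$ is log canonical at $x$, for every prime divisor $E_i$ on $Y$ lying over $x$ one has $\mathrm{ord}_{E_i}\bigl(K_Y-\pi^*(K_X+\Delta)+\epsilon_0 E-(t_0-\epsilon)F\bigr)>-1$ if $E_i$ is exceptional, and this order equals $-\mathrm{coeff}_{\pi(E_i)}(\Delta)$ if $E_i$ is not exceptional (using $\mathrm{ord}_{E_i}(F)=0$ for non-exceptional $E_i$ over $x$). Second, exactly as in the proof of Theorem~\ref{thm:non-Gorenstein}, the hypothesis that $X$ is not quasi-Gorenstein at $x$ yields $\sO_X(K_X)^r\subseteq\m_x\sO_X(r(K_X+\Delta))$: if $\Delta=0$, because $\sO_X(K_X)$ is a non-principal reflexive ideal whose ordinary $r$-th power is strictly smaller than its reflexive hull $\sO_X(rK_X)$, which is invertible; and if $x\in\mathrm{Supp}(\Delta)$, because $\sO_X(rK_X)\subsetneq\sO_X(r(K_X+\Delta))$ with the latter invertible.

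Given $f\in\omega_{X,x}$, the membership $f^r\in\sO_X(K_X)^r\subseteq\m_x\sO_X(r(K_X+\Delta))$ pulls back, via $\m_x\sO_Y=\sO_Y(-F)$, to $\mathrm{div}_Y(f)+\pi^*(K_X+\Delta)\ge\frac1r F$; moreover along any non-exceptional $E_i$ one still has $\mathrm{ord}_{E_i}\bigl(\mathrm{div}_Y(f)+\pi^*(K_X+\Delta)\bigr)=\mathrm{ord}_{\pi(E_i)}(\mathrm{div}_X(f)+K_X)+\mathrm{coeff}_{\pi(E_i)}(\Delta)\ge\mathrm{coeff}_{\pi(E_i)}(\Delta)$. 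I would then combine through the identity
\begin{align*}
& K_Y+\mathrm{div}_Y(f)+\epsilon_0 E-(t-\epsilon_0)F \\
&\quad=\bigl[K_Y-\pi^*(K_X+\Delta)+\epsilon_0 E-(t_0-\epsilon)F\bigr]+\bigl[\mathrm{div}_Y(f)+\pi^*(K_X+\Delta)\bigr]-\bigl(\tfrac1r-\epsilon'-\epsilon_0\bigr)F
\end{align*}
and bound the order along each prime divisor $E_i$ over $x$. For non-exceptional $E_i$ (where $\mathrm{ord}_{E_i}(F)=0$) the three bracketed terms contribute $\ge-\mathrm{coeff}_{\pi(E_i)}(\Delta)$, $\ge\mathrm{coeff}_{\pi(E_i)}(\Delta)$, and $0$, for a total $\ge 0$. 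For exceptional $E_i$ the first term is $>-1$, while the second and third together are $\ge\bigl(\frac1r-(\frac1r-\epsilon'-\epsilon_0)\bigr)\mathrm{ord}_{E_i}(F)=(\epsilon'+\epsilon_0)\mathrm{ord}_{E_i}(F)\ge 0$, so the total is $>-1$. Hence the rounded-up divisor is effective over $\pi^{-1}(x)$, so $f\in\mathcal{I}(\omega_X,\m_x^t)_x$; since $f$ was arbitrary, $\mathcal{I}(\omega_X,\m_x^t)_x=\omega_{X,x}$, and letting $\epsilon,\epsilon'\to 0$ gives $\mathrm{d}_x(\m_x)\ge t_0+1/r$.

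The routine parts are the translation of membership in $\mathcal{I}(\omega_X,\m_x^t)_x$ into the valuative inequality and the observation that $\mathcal{I}(\omega_X,\m_x^t)\subseteq\omega_X$ near $x$. The one genuinely substantive ingredient is the non-quasi-Gorenstein estimate $\sO_X(K_X)^r\subseteq\m_x\sO_X(r(K_X+\Delta))$ — concretely, that a non-principal reflexive rank-one fractional ideal has ordinary $r$-th power strictly contained in its (invertible) reflexive $r$-th power; this is precisely the point used in Theorem~\ref{thm:non-Gorenstein}, and I would either quote it from there or give the short cancellation argument. The fiddliest bookkeeping is the split between exceptional and non-exceptional components of the resolution, which is forced on us because $\Delta$ may carry components of coefficient exactly $1$; this is why one must add the relevant $\Q$-divisors before rounding up, rather than rounding up $K_Y-\pi^*(K_X+\Delta)+\epsilon_0 E-(t_0-\epsilon)F$ separately.
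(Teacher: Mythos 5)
Your proof is correct and follows essentially the same route as the paper: the same two inputs (the log canonicity inequality on a log resolution and the containment $\omega_X^r\subseteq\m_x\sO_X(r(K_X+\Delta))$ forced by the failure of quasi-Gorensteinness) combined via the same decomposition that trades $\tfrac1r\pi^*(K_X+\Delta)$ against an extra $\tfrac1r F$. The only differences are presentational: the paper handles the rounding in one line via $\lceil A+B\rceil\le\lceil A\rceil+B$ for $B$ integral instead of your component-by-component check, and it asserts the key containment where you supply the (correct) cancellation argument.
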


\begin{proof}
Shrinking $X$ if necessary, we may assume that $X$ is affine, $\sO_X(r(K_X+\Delta)) \cong \sO_X$ and $((X, \Delta); \m_x^{\mathrm{lct}_x(\Delta;\m_x)})$ is log canonical. 
Let $\pi \colon Y \to X$ be a log resolution of $(X, \Delta, \m_x)$ such that $\m_x \sO_X=\sO_X(-F_x)$ is invertible, and let $E$ be the reduced divisor supported on $\mathrm{Exc}(\pi)$. 
Putting $t=\mathrm{lct}_x(\Delta; \m_x)$, one has the inequality 
\[
\lceil K_Y-\pi^*(K_X+\Delta)-(t-\epsilon)F_x+\epsilon E \rceil \ge 0 
\]
for every $\epsilon>0$. 
On the other hand, since $X$ is not quasi-Gorenstein at $x$, the fractional ideal $\omega_{X}^r$ is contained in $\m_x \sO_X(r(K_X+\Delta))$. 
Hence, for each $f \in \omega_X$, one has the inequality 
\[
r\mathrm{div}_Y(f) +r\pi^*(K_X+\Delta)-F_x \ge 0.
\]
It follows from these two inequalities that 
\begin{align*}
0 \le & \lceil K_Y-\pi^*(K_X+\Delta)-(t-\epsilon)F_x+\epsilon E \rceil\\
= & \left\lceil K_Y+ \epsilon E-\left(t+\frac{1}{r}-\epsilon \right)F_x - \pi^*(K_X+\Delta)+\frac{1}{r}F_x \right\rceil\\
\le & K_Y+\left\lceil \epsilon E-\left(t+\frac{1}{r}-\epsilon \right)F_x \right\rceil+\mathrm{div}_Y(f)
\end{align*}
for all $\epsilon>0$ and all $f \in \omega_X$. 
This means that $\mathcal{I}(\omega_X, \m_x^{\mathrm{lct}_x(\Delta; \m_x)+1/r})=\omega_X$, that is, $\mathrm{d}_x(\m_x) \ge \mathrm{lct}_x(\Delta; \m_x)+1/r$. 
\end{proof}

The following theorem is the main result of this section; this is a characteristic zero analogue of Theorem~\ref{thm:anti-canonical}. 

\begin{thm}
\label{thm:lc}
Suppose that $(X, x)$ is a log canonical singularity in the sense of de~Fernex-Hacon. 
Assume in addition that the anti-canonical cover $\bigoplus_{n \ge 0}\sO_X(-nK_X)_x$ is Noetherian. 
Then $\mathrm{lct}_x(\m_x)=\mathrm{d}_x(\m_x)$ if and only if $(X, x)$ is quasi-Gorenstein. 
\end{thm}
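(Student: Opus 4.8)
The plan is to follow the proof of Theorem~\ref{thm:anti-canonical} almost line by line, with Proposition~\ref{prop:non-Gorenstein2} playing the role that Theorem~\ref{thm:non-Gorenstein} plays in positive characteristic, and with the de~Fernex--Hacon definition of log canonicity supplying the ``boundary divisor'' that was produced in characteristic $p$ by $F$-purity.

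The ``if'' direction is immediate from Lemma~\ref{lem:dbt basic}~(2): if $(X,x)$ is quasi-Gorenstein then, being log canonical in the sense of de~Fernex--Hacon, it satisfies $\mathrm{lct}_x(\ba)=\mathrm{d}_x(\ba)$ for every ideal $\ba$, in particular for $\ba=\m_x$.

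For the ``only if'' direction, suppose $X$ is not quasi-Gorenstein at $x$; I would prove that there is an integer $r\ge 1$ with $\mathrm{lct}_x(\m_x)+\frac{1}{r}\le \mathrm{d}_x(\m_x)$, which in particular forces $\mathrm{lct}_x(\m_x)<\mathrm{d}_x(\m_x)$. Shrinking $X$, we may assume that $X=\Spec\sO_{X,x}$ is affine and that the anti-canonical cover $\bigoplus_{n\ge 0}\sO_X(-nK_X)$ is a finitely generated $\sO_X$-algebra; choose $r\ge 1$ so that its $r$-th Veronese subalgebra is generated in degree one, i.e.\ $\sO_X(-nrK_X)=\sO_X(-rK_X)^{\,n}$ for all $n\ge 0$. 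Then $\sO_X(-rK_X)$ is generated by its global sections, and for a general effective divisor $D\in|-rK_X|$ the $\Q$-divisor $\Delta:=\frac{1}{r}D$ satisfies $K_X+\Delta\sim_{\Q}0$, hence is $\Q$-Cartier of some index $r'$ dividing $r$. The crucial point --- and this is exactly where the Noetherianity of the anti-canonical cover is used --- is that for $r$ chosen as above this single boundary computes the de~Fernex--Hacon invariant: $(X,\Delta)$ is log canonical at $x$ and $\mathrm{lct}_x(\Delta;\m_x)=\mathrm{lct}_x(\m_x)$, where the left-hand side is the \emph{classical} log canonical threshold of $((X,\Delta);\m_x^{\bullet})$. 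Granting this, Proposition~\ref{prop:non-Gorenstein2} applied to $\Delta$ gives
\[
\mathrm{lct}_x(\m_x)+\frac{1}{r}=\mathrm{lct}_x(\Delta;\m_x)+\frac{1}{r}\le \mathrm{lct}_x(\Delta;\m_x)+\frac{1}{r'}\le \mathrm{d}_x(\m_x),
\]
completing the proof. (It is enough, in fact, to produce for every $t<\mathrm{lct}_x(\m_x)$ a boundary $\Delta_t$ of index at most $r$ with $((X,\Delta_t);\m_x^t)$ log canonical, and then let $t\to\mathrm{lct}_x(\m_x)$.)

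I expect the main obstacle to be precisely the claimed equality $\mathrm{lct}_x(\Delta;\m_x)=\mathrm{lct}_x(\m_x)$ for a boundary of bounded index: the inequality $\le$ is built into the de~Fernex--Hacon definition, but the reverse inequality requires knowing that the ``$m$-limiting'' log canonical thresholds of \cite{dFH} stabilize once the anti-canonical algebra is finitely generated, with stable value attained at every $m$ divisible by $r$, and that the corresponding general member $\frac{1}{r}D$ yields a genuinely log canonical pair $(X,\Delta)$. The supporting points --- a Bertini-type genericity statement for $D$ after localizing at $x$, compatibility of the linear systems $|-mK_X|$ under the Veronese relation, and the fact that log canonicity in the sense of de~Fernex--Hacon forces $X$ to be Du Bois so that $\mathrm{d}_x(\m_x)$ is even defined --- all follow from \cite{dFH} together with the results recalled above, but it is this extraction of a uniform bound on the $\Q$-Cartier index of a witnessing boundary that is the heart of the argument.
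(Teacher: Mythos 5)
Your proposal is correct and is essentially the paper's own argument: the paper likewise uses Noetherianity to fix $r$ with $\sO_X(-rmK_X)=\sO_X(-rK_X)^m$, produces for each small $\epsilon>0$ a boundary $\Delta$ (depending on $\epsilon$) with $K_X+\Delta$ $\Q$-Cartier of index $r$ and $((X,\Delta);\m_x^{\mathrm{lct}_x(\m_x)-\epsilon})$ log canonical, and concludes with Proposition~\ref{prop:non-Gorenstein2} exactly as in your parenthetical limiting remark (which is the right formulation --- the stronger equality $\mathrm{lct}_x(\Delta;\m_x)=\mathrm{lct}_x(\m_x)$ for a single fixed $\Delta$ is not needed and is not claimed). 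The step you flag as the main obstacle is settled there just as you anticipate: since $(X,\m_x^{\mathrm{lct}_x(\m_x)-\epsilon})$ is log canonical in the de~Fernex--Hacon sense, the $m$-th limiting log discrepancies are nonnegative for all multiples $m$ of some $m_0$, the Veronese relation forces $a_{r,F}=a_{rm_0,F}\ge 0$ for every divisor $F$ over $X$, and the argument of \cite[Proposition~7.2]{dFH} then yields the index-$r$ boundary.
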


\begin{proof}
Since the ``if" part immediately follows from Lemma~\ref{lem:dbt basic}~(2), we will show the ``only if" part. 
Shrinking $X$ if necessary, we may assume that $X$ is log canonical in the sense of de~Fernex-Hacon and that $\bigoplus_{n \ge 0}\sO_X(-nK_X)$ is Noetherian. 
Then one can find an integer $r \ge 1$ such that $\sO_X(-rmK_X)=\sO_X(-rK_X)^m$ for every integer $m \ge 1$. 
Fix a real number $\epsilon$ with $\min\{\mathrm{lct}(\m_x), 1/r\}>\epsilon>0$; when $\mathrm{lct}(\m_x)=0$, put $\epsilon=0$. 
Since $(X, \m_x^{\mathrm{lct}(\m_x)-\epsilon})$ is log canonical in the sense of de~Fernex-Hacon, there exists an integer $m_0 \ge 1$ such that the $m$-th limiting log discrepancy $a_{m, F}(X, \m_x^{\mathrm{lct}(\m_x)-\epsilon})$ is nonnegative for every prime divisor $F$ over $X$ and for every positive multiple $m$ of $m_0$ by \cite[Definition~7.1]{dFH} (see \textit{loc.~cit.} for the definition of the $m$-th limiting log discrepancy of a pair). By the choice of $r$, one has 
\[
a_{r, F}(X, \m_x^{\mathrm{lct}(\m_x)-\epsilon})=a_{rm_0, F}(X, \m_x^{\mathrm{lct}(\m_x)-\epsilon}) \ge 0.
\]
It follows from an argument similar to the proof of \cite[Proposition~7.2]{dFH} that there exists an effective $\Q$-divisor $\Delta$ on $X$ such that $K_X+\Delta$ is $\Q$-Cartier of index $r$ and $((X, \Delta); \m_x^{\mathrm{lct}(\m_x)-\epsilon})$ is log canonical. 
If $X$ is not quasi-Gorenstein at $x$, then by Proposition~\ref{prop:non-Gorenstein2}, 
\[
\mathrm{lct}_x(\m_x) <\mathrm{lct}_x(\m_x)-\epsilon +\frac{1}{r} \le \mathrm{lct}_x(\Delta; \m_x) +\frac{1}{r} \le \mathrm{d}_x(\m_x).
\]
This contradicts the assumption that $\mathrm{lct}_x(\m_x)=\mathrm{d}_x(\m_x)$. 
\end{proof}

\begin{cor}
\label{cor:log terminal}
Suppose that $(X, x)$ is a log terminal singularity in the sense of de~Fernex-Hacon. 
Then 
$\mathrm{lct}_x(\m_x)=\mathrm{d}_x(\m_x)$ if and only if $(X, x)$ is Gorenstein. 
\end{cor}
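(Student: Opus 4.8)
The plan is to deduce Corollary~\ref{cor:log terminal} from Theorem~\ref{thm:lc}, together with two standard facts about log terminal singularities.

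First I would note that a log terminal singularity in the sense of de~Fernex-Hacon is, by the very definition recalled above (take $\ba=\sO_X$ in part (2) of that definition), a log canonical singularity in the sense of de~Fernex-Hacon. So the first hypothesis of Theorem~\ref{thm:lc} is automatic. Next I would verify the Noetherianity hypothesis: as remarked after Theorem~B, the anti-canonical cover $\bigoplus_{n \ge 0}\sO_X(-nK_X)_x$ is Noetherian whenever $(X,x)$ is log terminal in the sense of de~Fernex-Hacon. Indeed, by definition there is an effective $\Q$-divisor $\Delta$ on $X$ with $K_X+\Delta$ $\Q$-Cartier and $(X,\Delta)$ klt in the classical sense; writing $-nK_X \sim n\Delta - n(K_X+\Delta)$ exhibits the anti-canonical algebra, up to a Veronese twist by the index of $K_X+\Delta$, as the section ring of a divisor on the klt pair $(X,\Delta)$, whose finite generation is supplied by BCHM-type results. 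With both hypotheses in place, Theorem~\ref{thm:lc} gives that $\mathrm{lct}_x(\m_x)=\mathrm{d}_x(\m_x)$ if and only if $(X,x)$ is quasi-Gorenstein.

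It then remains only to upgrade ``quasi-Gorenstein'' to ``Gorenstein'' under the log terminal hypothesis. One direction is immediate: if $(X,x)$ is Gorenstein then it is quasi-Gorenstein, so $\mathrm{lct}_x(\m_x)=\mathrm{d}_x(\m_x)$ already by Lemma~\ref{lem:dbt basic}~(2). For the converse I would use that a klt pair $(X,\Delta)$ forces the underlying variety $X$ to have rational singularities, so that $\sO_{X,x}$ is Cohen-Macaulay; since a Cohen-Macaulay quasi-Gorenstein local ring is Gorenstein (as recalled in the introduction), quasi-Gorensteinness of $(X,x)$ is equivalent to Gorensteinness. Combining this with the previous paragraph yields the stated equivalence.

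The main obstacle here is essentially bookkeeping rather than a new argument: one must ensure that the Noetherianity hypothesis of Theorem~\ref{thm:lc} is genuinely available in the de~Fernex-Hacon log terminal setting — this is the point at which the cited finite-generation input (ultimately BCHM) is used — and that ``log terminal in the sense of de~Fernex-Hacon'' really does imply that $X$ has rational, hence Cohen-Macaulay, singularities, so that the passage from quasi-Gorenstein to Gorenstein is legitimate. Once those two inputs are granted, the corollary is a formal consequence of Theorem~\ref{thm:lc}.
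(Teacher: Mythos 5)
Your proof is correct and follows essentially the same route as the paper: one verifies that the anti-canonical cover is Noetherian using the minimal model program for klt pairs (the paper cites Koll\'ar's Theorem~92 for exactly this) and then applies Theorem~\ref{thm:lc}. You also spell out the step the paper leaves implicit, namely that a de~Fernex-Hacon log terminal singularity is rational and hence Cohen-Macaulay, so that the quasi-Gorenstein conclusion of Theorem~\ref{thm:lc} upgrades to Gorenstein as the corollary asserts.
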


\begin{proof}
Since $(X, x)$ is log terminal, using the minimal model program for klt pairs, one can show that the anti-canonical cover $\bigoplus_{n \ge 0}\sO_X(-nK_X)_x$ is Noetherian (see \cite[Theorem~92]{Ko}). Thus, the assertion follows from Theorem~\ref{thm:lc}. 
\end{proof}

\begin{prop}
\label{prop:dbt=-a}
Let $R$ be a normal standard graded ring, with $R_0$ an algebraically closed field of characteristic zero, and let $\m$ denote the homogeneous maximal ideal of $R$. 
Suppose that $\Spec R$ has only Du Bois singularities. 
Then $\mathrm{d}(\m) \le -a(R)$. 
\end{prop}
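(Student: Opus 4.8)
The plan is to show that $\mathcal{I}(\omega_X,\m^t)\neq\omega_X$ for every real number $t>-a(R)$; since $\mathrm{d}(\m)=\sup\{t\ge0\mid\mathcal{I}(\omega_X,\m^t)=\omega_X\}$, this will give $\mathrm{d}(\m)\le-a(R)$ at once. This is the characteristic-zero counterpart of Proposition~\ref{prop:a_i}, and the strategy runs in parallel: isolate an element of bottom degree in the canonical module and check that it falls out of the relevant module. One may assume $d:=\dim R\ge2$, since a normal standard graded domain over a field of dimension $\le1$ is $k$ or a polynomial ring $k[t]$, where the assertion is clear. By graded local duality $\delta(\omega_R)=-a(R)$, where $\delta(-)$ denotes the least degree in which a graded module is nonzero; one fixes a nonzero homogeneous $f\in\omega_R$ of degree $-a(R)$.

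Since $\mathcal{I}(\omega_X,\m^t)$ is independent of the chosen log resolution, I would take a $k^*$-equivariant log resolution $\pi\colon Y\to X$ of $(X,\m)$ factoring through the blow-up $Z:=\mathrm{Bl}_\m X$ of the vertex, with $\rho\colon Y\to Z$ the induced map. As $R$ is standard graded, $Z$ is the total space over $V:=\Proj R$ of the line bundle $\sO_V(-1)$, with exceptional divisor $G$ the zero section, a prime divisor; let $\tilde G\subseteq Y$ be its strict transform. With $E$ the reduced exceptional divisor of $\pi$ and $\m\sO_Y=\sO_Y(-F)$ as in the definition of $\mathcal{I}$, the divisor $\tilde G$ is $\pi$-exceptional, so it occurs in $E$ with coefficient $1$; and $\m\sO_Z=\sO_Z(-G)$ with $G$ Cartier, so $F=\rho^*G$ and $\tilde G$ occurs in $F$ with coefficient $1$ as well. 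Hence, for small $\epsilon>0$,
\[
\mathrm{coeff}_{\tilde G}\bigl(\lceil\epsilon E-(t-\epsilon)F\rceil\bigr)=\bigl\lceil\epsilon-(t-\epsilon)\bigr\rceil=\lceil2\epsilon-t\rceil .
\]

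The crux will be the claim that, for every nonzero homogeneous $g\in\omega_R$ of degree $j$, the divisor $\mathrm{div}_Y(g)$ of $g$ viewed as a rational section of $\omega_Y$ has $\mathrm{coeff}_{\tilde G}\bigl(\mathrm{div}_Y(g)\bigr)=j-1$. I would prove this by a local computation near the generic point of $\tilde G$, which lies over the generic point $\eta$ of $V$; there $V$—and hence $Z$ and $Y$—is smooth. Pick a nonzero $x_0\in R_1$; as a fibrewise-linear function on $Z$ it trivializes $\sO_V(1)$ near $\eta$ and serves as a fibre coordinate with $\tilde G=\{x_0=0\}$ locally, of $k^*$-weight $1=\deg x_0$. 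Completing $x_0$ to a local coordinate system near $\eta_{\tilde G}$ by weight-zero functions $s_1,\dots,s_{d-1}$ pulled back from $K(V)$, the form $dx_0\wedge ds_1\wedge\dots\wedge ds_{d-1}$ generates $\omega_Y$ there and has weight $1$; therefore a weight-$j$ rational section of $\omega_Y$ is of the form $x_0^{\,j-1}\,\psi\,(dx_0\wedge ds_1\wedge\dots\wedge ds_{d-1})$ with $\psi\in K(V)^{\times}$ of weight $0$. Since $x_0$ is a local equation for $\tilde G$ and $\psi$ is a unit at $\eta_{\tilde G}$, the coefficient of $\tilde G$ in this divisor is $j-1$, as claimed.

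To finish, I would first apply the claim with $g=f$, $j=-a(R)$, to $f\in\omega_R=\mathcal{I}(\omega_X)=\pi_*\omega_Y(E)$ (the middle equality being Remark~\ref{rem:rational, db}~(2), which uses that $X$ is Du Bois): from $\mathrm{div}_Y(f)+E\ge0$ one gets $(-a(R)-1)+1\ge0$, i.e.\ $a(R)\le0$, so $t>-a(R)\ge0$ and the formula $\mathcal{I}(\omega_X,\m^t)=\pi_*\omega_Y(\lceil\epsilon E-(t-\epsilon)F\rceil)$ is in force. If $f\in\mathcal{I}(\omega_X,\m^t)$, then $\mathrm{div}_Y(f)+\lceil\epsilon E-(t-\epsilon)F\rceil\ge0$, and reading off the coefficient of $\tilde G$ (via the claim and the displayed identity) gives $(-a(R)-1)+\lceil2\epsilon-t\rceil\ge0$. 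But $-a(R)\in\Z$ and $t>-a(R)$, so $2\epsilon-t<a(R)$ for $\epsilon$ small, whence $\lceil2\epsilon-t\rceil\le a(R)$ and the left-hand side is $\le-1$, a contradiction. Thus $f\notin\mathcal{I}(\omega_X,\m^t)$, so $\mathcal{I}(\omega_X,\m^t)\subsetneq\omega_X$, and $\mathrm{d}(\m)\le-a(R)$. The main obstacle is the claim of the third paragraph—the only place where the cone structure of $X$ enters—and the delicate point there is to keep careful track of the $k^*$-weights of the differential forms: the weight-$1$ contribution of the fibre differential $dx_0$ is exactly what produces $j-1$ rather than $j$.
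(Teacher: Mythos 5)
Your proof is correct and follows essentially the same route as the paper: both arguments come down to tracking the coefficient along the exceptional divisor of the blow-up of the vertex (via its strict transform on a log resolution), where a degree-$j$ homogeneous element of $\omega_R$ vanishes to order exactly $j-1$, so that $t>-a(R)$ forces the bottom-degree element out of $\mathcal{I}(\omega_X,\m^t)$. The only difference is one of self-containment: where the paper quotes the truncation formula $\phi_*\omega_Y(\lceil\epsilon-t\rceil E)=[\omega_X]_{\ge\lfloor t-\epsilon\rfloor+1}$ from Hyry--Smith and $a(R)\le 0$ from Ma, you re-derive both by the explicit $k^*$-weight computation at the generic point of the exceptional divisor and the Du Bois equality $\mathcal{I}(\omega_X)=\omega_X$, implicitly using the standard degree-preserving identification of the duality-normalized $\omega_R$ with weight-graded rational top forms.
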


\begin{proof}
Put $X=\Spec R$. 
Since $X$ has only Du Bois singularities, $a(R) \le 0$ by \cite[Theorem~4.4]{Ma}. 
Suppose that $\mathcal{I}(\omega_X, \m^t)=\omega_X$ with $t>0$. 
Let $\phi \colon Y \to X$ be the blow-up of $X$ at $\m$ and $E=\Proj R$ be its exceptional divisor. 
Take a log resolution $\psi \colon \widetilde{X} \to Y$ of $(Y, E)$ and let $\widetilde{E}$ be the strict transform of $E$ on $\widetilde{X}$. 
We fix a canonical divisor $K_{\widetilde{X}}$ on $\widetilde{X}$ such that $\psi_*K_{\widetilde{X}}=K_Y$. 
Since $\mathcal{I}(\omega_X, \m^t)=\omega_X$, 
\[
\mathrm{ord}_{\widetilde{E}}(\lceil K_{\widetilde{X}}+\mathrm{div}_{\widetilde{X}}(f)+\epsilon \widetilde{E}-(t-\epsilon) \psi^*E \rceil) \ge 0
\]
for all $f \in \omega_X$ and all sufficiently small $\epsilon>0$. 
Taking the direct image by $\psi$, we see that $\mathrm{ord}_{E}(\lceil K_{Y}+\mathrm{div}_{Y}(f)+\epsilon E-(t-\epsilon) E \rceil) \ge 0$, 
that is, $\phi_*\omega_{Y}(\lceil \epsilon-t \rceil E)=\omega_X$ for sufficiently small $\epsilon>0$. 
On the other hand, it is easy to see by the definition of $\phi$ (see, for example, \cite[Proposition~6.2.1]{HS}) that 
\[
\phi_*\omega_{Y}(\lceil \epsilon-t \rceil E)=[\omega_X]_{\ge \lfloor t- \epsilon \rfloor+1}. 
\]
Thus, $t \le -a(R)$, that is, $\mathrm{d}(\m) \le -a(R)$. 
\end{proof}

As a consequence, we can prove a characteristic zero analogue of Conjecture~\ref{conj:HWY}, which gives an affirmative answer to \cite[Conjecture~6.9]{dSNB}.

\begin{cor}
\label{cor:char 0 main thm}
Let $R$ be a normal standard graded ring, with $R_0$ an algebraically closed field of characteristic zero. Let $\m$ denote the homogeneous maximal ideal of $R$. 
Assume that $X:=\Spec R$ has log canonical singularities in the sense of de~Fernex-Hacon. 
\begin{enumerate}[\quad\rm(1)]
\item Then $\mathrm{lct}(\m) \le -a(R)$.

\item Suppose in addition that the anti-canonical cover $\bigoplus_{n \ge 0}\sO_X(-nK_X)$ is Noetherian (this assumption is satisfied, for example, if $X$ has log terminal singularities in the sense of de~Fernex-Hacon or if $R$ is $\Q$-Gorenstein). 
Then $\mathrm{lct}(\m)=-a(R)$ if and only if $R$ is quasi-Gorenstein. 
\end{enumerate}
\end{cor}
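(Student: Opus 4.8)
The plan is to deduce both parts directly from the machinery already assembled in this section, treating the corollary as the graded specialization of Theorem~\ref{thm:lc} together with the bound of Proposition~\ref{prop:dbt=-a}. First I would observe that, by Remark~\ref{rem:graded char0}, the closed point $x$ corresponding to $\m$ satisfies $\mathrm{lct}(\m)=\mathrm{lct}_x(\m_x)$ and $\mathrm{d}(\m)=\mathrm{d}_x(\m_x)$, since a $k^*$-equivariant log resolution shows all the submodules involved are graded; this lets me pass freely between the global graded invariants and the local ones at $x$. Also, since $X$ has log canonical singularities in the sense of de~Fernex-Hacon, it has Du Bois singularities (log canonical implies Du Bois), so Proposition~\ref{prop:dbt=-a} applies and gives part (1) once I know $\mathrm{lct}(\m)\le \mathrm{d}(\m)$ — but that inequality is exactly Lemma~\ref{lem:dbt basic}~(2) applied at $x$. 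So part (1) follows by chaining $\mathrm{lct}(\m)=\mathrm{lct}_x(\m_x)\le \mathrm{d}_x(\m_x)=\mathrm{d}(\m)\le -a(R)$.

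For part (2), assume in addition that the anti-canonical cover is Noetherian. The ``only if'' direction is the substantive one: suppose $\mathrm{lct}(\m)=-a(R)$. Then by part (1) and the chain above, all the inequalities collapse, so $\mathrm{lct}_x(\m_x)=\mathrm{d}_x(\m_x)$. Now I invoke Theorem~\ref{thm:lc}: under the Noetherianity hypothesis on $\bigoplus_{n\ge 0}\sO_X(-nK_X)$ — which localizes to the corresponding hypothesis on $\bigoplus_{n\ge 0}\sO_X(-nK_X)_x$ — the equality $\mathrm{lct}_x(\m_x)=\mathrm{d}_x(\m_x)$ forces $(X,x)$ to be quasi-Gorenstein, hence $R$ is quasi-Gorenstein (quasi-Gorensteinness at the vertex is equivalent to quasi-Gorensteinness of the graded ring). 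Conversely, if $R$ is quasi-Gorenstein, then $(X,x)$ is quasi-Gorenstein, so by the second assertion of Lemma~\ref{lem:dbt basic}~(2) we get $\mathrm{lct}_x(\m_x)=\mathrm{d}_x(\m_x)$, and combined with part (1) this would give $\mathrm{lct}(\m)=\mathrm{d}(\m)\le -a(R)$; to get equality I would instead argue directly that when $R$ is quasi-Gorenstein the standard graded computation (as in Proposition~\ref{prop:fit=-a}, whose characteristic-zero analogue via a very ample $H$ with $K_X\sim a(R)H$ is routine) yields $\mathrm{lct}(\m)\ge -a(R)$, forcing equality.

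It remains only to justify the parenthetical remark that the Noetherianity hypothesis holds when $X$ has log terminal singularities or when $R$ is $\Q$-Gorenstein. The $\Q$-Gorenstein case is immediate, since then $\sO_X(-rK_X)$ is invertible for some $r$ and the anti-canonical cover is a finitely generated module over a Veronese subring. The log terminal case is Corollary~\ref{cor:log terminal}'s input: one appeals to the minimal model program for klt pairs (\cite[Theorem~92]{Ko}) to conclude finite generation of the anti-canonical cover. The main obstacle, such as it is, lies not in any new argument but in being careful about the passage between the graded and local pictures — specifically, checking that ``quasi-Gorenstein'' and all three thresholds genuinely agree in the two settings; everything else is a direct citation of Theorem~\ref{thm:lc}, Lemma~\ref{lem:dbt basic}~(2), and Proposition~\ref{prop:dbt=-a}.
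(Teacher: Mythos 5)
Your treatment of part (1) and of the ``only if'' direction of part (2) matches the paper's proof: chain $\mathrm{lct}(\m)=\mathrm{lct}_x(\m_x)\le \mathrm{d}_x(\m_x)=\mathrm{d}(\m)\le -a(R)$ via Remark~\ref{rem:graded char0}, Lemma~\ref{lem:dbt basic}~(2) and Proposition~\ref{prop:dbt=-a}, then collapse the inequalities and invoke Theorem~\ref{thm:lc}. Your observation that one needs ``log canonical in the sense of de~Fernex--Hacon $\Rightarrow$ Du Bois'' for $\mathrm{d}(\m)$ to even be defined and for Proposition~\ref{prop:dbt=-a} to apply is a correct detail that the paper leaves implicit.

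The gap is in the ``if'' direction of (2). You correctly note that Lemma~\ref{lem:dbt basic}~(2) plus part (1) only yields $\mathrm{lct}(\m)=\mathrm{d}(\m)\le -a(R)$, and that the missing ingredient is the inequality $\mathrm{lct}(\m)\ge -a(R)$ for a quasi-Gorenstein log canonical $R$; but you then dismiss this as a ``routine'' characteristic-zero analogue of Proposition~\ref{prop:fit=-a}. It is not routine, and the analogy does not transfer: the proof of Proposition~\ref{prop:fit=-a}~(2) rests on the global $F$-splitting of $\Proj R$ and the Schwede--Smith construction of a splitting section, which has no direct characteristic-zero counterpart. What the paper actually does is blow up $X$ at $\m$ to get $\phi\colon Y\to X$ with exceptional divisor $E=\Proj R$, compute $K_{Y/X}=-(1+a(R))E$ using quasi-Gorensteinness, observe that $E$ is log canonical because $X$ is, and then apply \emph{inversion of adjunction for log canonical pairs} (Kawakita's theorem) to conclude that $(Y,E)$ is log canonical; pulling back to a log resolution of $(Y,E)$ then shows every discrepancy of $(X,\m^{-a(R)})$ is $\ge -1$. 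Without this blow-up computation and the appeal to inversion of adjunction (or some equivalent discrepancy estimate), the inequality $\mathrm{lct}(\m)\ge -a(R)$ is unproved, so this step needs to be supplied rather than asserted.
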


\begin{proof}
Since (1) follows from Remark~\ref{rem:graded char0}, Lemma~\ref{lem:dbt basic}~(2) and Proposition~\ref{prop:dbt=-a}, we will show (2). 
Let $x \in X$ be the closed point corresponding to $\m$. 
If $\mathrm{lct}(\m)=-a(R)$, then $\mathrm{lct}_x(\m)$ has to be equal to $\mathrm{d}_x(\m)$ by Remark~\ref{rem:graded char0}, Lemma~\ref{lem:dbt basic}~(2) and Proposition~\ref{prop:dbt=-a} again. 
It follows from Theorem~\ref{thm:lc} that $X$ is quasi-Gorenstein at $x$, which is equivalent to saying that $R$ is quasi-Gorenstein. 

Next we will show the ``if" part of (2). Suppose that $R$ is quasi-Gorenstein. 
Let $\phi \colon Y \to X$ be the blow-up of $X=\Spec R$ at $\m$ and $E=\Proj R$ be its exceptional divisor. 
Note that $Y$ is normal and quasi-Gorenstein. 
It is easy to see that $K_{Y/X}=-(1+a(R))E$, see, for example, the proof of \cite[Proposition~5.4]{SS}. 
Take a log resolution $\psi \colon \widetilde{X} \to Y$ of $(Y, E)$, and then 
\[
K_{\widetilde{X}/X}+a(R)\psi^*E=K_{\widetilde{X}/Y}+\psi^*(K_{Y/X}+a(R)E)=K_{\widetilde{X}/Y}-\psi^*E.
\]
Since $X$ has only log canonical singularities, $E$ has also only log canonical singularities. 
It follows from inversion of adjunction for log canonical pairs \cite{Ka} that $(Y, E)$ is log canonical, which implies that all the coefficients of the divisor $K_{\widetilde{X}/Y}-\psi^*E$ are greater than or equal to $-1$. 
Thus, $(X, \m^{-a(R)})$ is log canonical, that is, $\mathrm{lct}(\m) \ge -a(R)$. 
\end{proof}

\begin{rem}
Let $(R, \m)$ be the same as in Corollary~\ref{cor:char 0 main thm}. 
If $X=\Spec R$ is $\Q$-Gorenstein, then we can show that $\mathrm{lct}(\m) \le -a_i(R)$ for all $i$ (see the paragraph preceding Proposition~\ref{prop:a_i} for the definition of $a_i(R)$). 
The proof is as follows. 

We may assume that $i \ge 2$. 
Let $\varphi\colon Y \to X$ be the blow-up of $X$ at $\m$ and $Z=\Proj R$ be its exceptional divisor. 
Since $R$ is a normal standard graded ring, there exists a very ample divisor $H$ on $Z$ such that $R=\bigoplus_{n \ge 0} H^0(Z, \sO_Z(n H))$ and $rK_Z \sim aH$ for some $a \in \Z$, where $r$ is the Gorenstein index of $R$. 
We see by the same argument as the proof of \cite[Proposition~5.4]{SS} that $K_{Y/X}=-(1+a/r)Z$, which implies that $\mathrm{lct}(\m)$ has to be less than or equal to $-a/r$. 
Therefore, in order to prove the inequality $\mathrm{lct}(\m) \le -a_i(R)$, it suffices to show that $-a/r \le -a_i(R)$. 
This condition is equivalent to saying that if $\ell$ is an integer greater than $a/r$, then $H^{i-1}(Z, \sO_Z(\ell H))=0$, because
\[
a_i(R)=\max\{\ell \in \Z \mid H^{i-1}(Z, \sO_Z(\ell H)) \ne 0\}.
\] 
However, since $\ell H-K_Z \sim_{\Q} (\ell-a/r)H$ is ample, this is immediate from \cite[Theorem~1.7]{Fu}. 
\end{rem}

When the ring is toric, we have a similar characterization in the non-standard graded case:

\begin{cor}
Let the notation be the same as in Corollary~\ref{cor:toric charp}. 
Let $R=k[\sigma^{\vee} \cap M]$ be an affine semigroup ring over a field $k$ of characteristic zero, defined by a strongly convex rational polyhedral cone $\sigma$. Let $\m$ be the unique monomial maximal ideal of $R$. 
\begin{enumerate}[\quad\rm(1)]
\item Then $\mathrm{d}(\m)=-a_{\sigma}(R)$. 
\item $\mathrm{lct}(\m)=-a_{\sigma}(R)$ if and only if $R$ is Gorenstein. 
\end{enumerate}
\end{cor}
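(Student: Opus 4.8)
The strategy is to follow the proof of the positive characteristic statement, Corollary~\ref{cor:toric charp}, with the $F$-rational threshold $c(\m)$ replaced by the Du Bois threshold $\mathrm{d}(\m)$, and then to deduce (2) from (1) together with Theorem~\ref{thm:lc}. Throughout, $\sigma$ is full-dimensional (this is forced by the existence of a unique monomial maximal ideal), $v_1,\dots,v_s$ denote the primitive ray generators of $\sigma$, and the graded canonical module is $\omega_R=\bigoplus\{k x^m:\langle m,v_i\rangle\ge 1\text{ for all }i\}$, whose monomial exponents are exactly the lattice points in $\mathrm{Int}(\sigma^\vee)$. The first point is that everything in sight is torus-equivariant: a toric log resolution of $(X,\m)$ is $(k^*)^{\dim R}$-equivariant, so $\mathcal{I}(\omega_X,\m^t)$ and $\mathcal{J}(\omega_X,\m^t)$ are graded (monomial) submodules of $\omega_R$, exactly as in Remark~\ref{rem:graded char0}; in particular $\mathrm{d}(\m)=\mathrm{d}_x(\m)$.

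Granting part (1), I would deduce part (2) as follows. Normal affine semigroup rings are Cohen-Macaulay (Hochster) and have rational singularities; moreover every affine toric variety $X$ is log canonical in the sense of de~Fernex-Hacon, since $(X,\partial X)$ is log canonical in the classical sense with $K_X+\partial X=0$, and the anti-canonical cover $\bigoplus_{n\ge 0}\sO_X(-nK_X)$ is again a finitely generated toric algebra. Hence Theorem~\ref{thm:lc} applies: $\mathrm{lct}_x(\m)=\mathrm{d}_x(\m)$ if and only if $(X,x)$ is quasi-Gorenstein. By Lemma~\ref{lem:dbt basic}~(2) and part (1), $\mathrm{lct}(\m)\le \mathrm{lct}_x(\m)\le \mathrm{d}_x(\m)=\mathrm{d}(\m)=-a_\sigma(R)$. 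If $R$ is Gorenstein it is quasi-Gorenstein, so $\mathrm{lct}_x(\m)=\mathrm{d}_x(\m)$ by Lemma~\ref{lem:dbt basic}~(2), forcing $\mathrm{lct}(\m)=-a_\sigma(R)$; conversely, if $\mathrm{lct}(\m)=-a_\sigma(R)$ then all of the displayed inequalities are equalities, so $\mathrm{lct}_x(\m)=\mathrm{d}_x(\m)$ and Theorem~\ref{thm:lc} gives that $X$ is quasi-Gorenstein at $x$, hence $R$ is quasi-Gorenstein and therefore Gorenstein, being Cohen-Macaulay.

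It remains to prove part (1). Since $X$ has rational singularities, Lemma~\ref{lem:dbt basic}~(1) gives $\mathrm{d}(\m)=\sup\{t\ge 0:\mathcal{J}(\omega_X,\m^t)=\omega_X\}$. Choose a smooth fan $\Sigma$ refining the faces of $\sigma$ so that, on the corresponding toric morphism $\pi\colon Y\to X$, the ideal $\m\sO_Y=\sO_Y(-F)$ is invertible, say $F=\sum_w h_\m(w)D_w$ with $w$ running over the primitive ray generators of $\Sigma$ and $h_\m$ the support function of the Newton polyhedron $P(\m)$. From $K_Y=-\sum_w D_w$ and $\mathrm{div}_Y(x^m)=\sum_w\langle m,w\rangle D_w$ one computes
\[
\mathcal{J}(\omega_X,\m^t)=\pi_*\sO_Y\Bigl(-\sum_w\bigl(1+\lfloor t\,h_\m(w)\rfloor\bigr)D_w\Bigr)=\bigoplus\bigl\{k x^m:\langle m,w\rangle\ge 1+\lfloor t\,h_\m(w)\rfloor\text{ for all }w\bigr\},
\]
so $\mathcal{J}(\omega_X,\m^t)=\omega_X$ exactly when every $m\in\mathrm{Int}(\sigma^\vee)\cap M$ satisfies these inequalities. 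For $w=v_i$ one has $h_\m(v_i)=0$ (when $\dim R\ge 2$; the one-dimensional case is trivial), so those are automatic; for a primitive $w$ in the interior of $\sigma$ one has $h_\m(w)\ge 1$, and since $\langle m,w\rangle$ is an integer the inequality is equivalent to $t<\langle m,w\rangle/h_\m(w)$. As $\lambda_\m(u)=\min_w\langle u,w\rangle/h_\m(w)$, with the minimum over the inner normals of the facets of $P(\m)$ (which we may take among the rays of $\Sigma$), this says exactly that $t<\min_{u\in\mathrm{Int}(\sigma^\vee)\cap M}\lambda_\m(u)=-a_\sigma(R)$; hence $\mathrm{d}(\m)=-a_\sigma(R)$, paralleling the computation of $c(\m)$ in Corollary~\ref{cor:toric charp}.

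The only genuine difficulty is the toric bookkeeping in part (1): pinning down the formula for $\mathcal{J}(\omega_X,\m^t)$ with the correct rounding, and checking that the binding inequalities come only from the facet normals of $P(\m)$, so that the value is independent of the choice of $\Sigma$ (this is where resolution-independence of the multiplier module is used); one should also note that the supremum defining $\mathrm{d}(\m)$ is not attained, since for an optimal pair $(u,w)$ the number $t\,h_\m(w)=\langle u,w\rangle$ is an integer. It should also be remarked that Remark~\ref{rem:graded char0}, Lemma~\ref{lem:dbt basic}, and the Noetherianity of the anti-canonical cover are invoked here for the possibly non-standard grading, but each goes through verbatim in the equivariant toric setting. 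Alternatively, part (1) could be obtained by reduction to characteristic $p$, comparing multiplier modules with test submodules and appealing to Corollary~\ref{cor:toric charp}, but the direct toric computation is cleaner and self-contained.
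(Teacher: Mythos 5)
Your proposal is correct and follows essentially the same route as the paper: reduce to the statement about $\mathcal{J}(\omega_X,\m^t)$ via Lemma~\ref{lem:dbt basic}~(1) (toric varieties have rational singularities), run the combinatorial argument of Corollary~\ref{cor:toric charp} to get $\mathrm{d}(\m)=-a_\sigma(R)$, and deduce (2) from (1) together with the main theorem. The only cosmetic differences are that the paper cites Blickle's toric multiplier-module formula where you derive it directly from a toric log resolution, and that it invokes Corollary~\ref{cor:log terminal} (toric singularities being log terminal in the de~Fernex--Hacon sense) where you apply Theorem~\ref{thm:lc} via Noetherianity of the toric anti-canonical cover.
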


\begin{proof}
It follows from the existence of a toric log resolution of $\m$ that $\mathcal{J}'(\m^t)$ and $\mathcal{I}(\omega_X, \m^t)$ are torus-invariant, and so $\mathrm{lct}(\m)$ and $\mathrm{d}(\m)$ are preserved under base field extension. Thus, we may assume that $k$ is algebraically closed. 

Since (2) follows from (1), Remark~\ref{rem:graded char0}, and Corollary~\ref{cor:log terminal}, it remains to justify (1). 
For this, use the same strategy as the proof of Corollary~\ref{cor:toric charp}, in which case the assertion follows from \cite[Theorem~2]{Bl} and Lemma~\ref{lem:dbt basic}~(1). 
\end{proof}


\end{document}